\theoremstyle{definition}
\newtheorem{dfn}{Definition}[section]
\newtheorem{ex}[dfn]{Example}
\newtheorem{algo}[dfn]{Algorithm}
\newtheorem{rem}[dfn]{Remark}
\newtheorem{method}[dfn]{Method}
\theoremstyle{plain}
\newtheorem{thm}[dfn]{Theorem}
\newtheorem{prop}[dfn]{Proposition}
\newtheorem{cor}[dfn]{Corollary}
\newtheorem{lem}[dfn]{Lemma}
\newcommand{\dilog}[2]
{\left[\begin{matrix}
#1 \\
#2 \\
\end{matrix}\right]}
\newcommand{\mindilog}[2]
{[\begin{smallmatrix}
#1 \\
#2 \\
\end{smallmatrix}]}
\newcommand{\dmindilog}[2]
{\left[\begin{smallmatrix}
#1 \\
#2 \\
\end{smallmatrix}\right]}
\newcommand{\bi}[2]
{\left(\begin{smallmatrix}
#1\\
#2\\
\end{smallmatrix}\right)}
\newcommand{\mpmatrix}[2]
{\left(\begin{smallmatrix}
#1\\
#2\\
\end{smallmatrix}\right)}
\newcommand{\orderedprod}{\mathop{\overrightarrow{\prod}}}
\renewcommand{\ker}{\textup{Ker}\:}
\title[Explicit Forms in Lower Degrees]{Explicit Forms in Lower Degrees of rank 2 Cluster Scattering Diagrams}
\author{Ryota Akagi}
\email{ryota.akagi.e6@math.nagoya-u.ac.jp}
\date{}
\begin{document}
\maketitle
\begin{abstract}
In this paper, we study wall elements of rank 2 cluster scattering diagrams based on dilogarithm elements. We derive two major results. First, we give a method to calculate wall elements in lower degrees. By this method, we may see the explicit forms of wall elements including the Badlands, which is the complement of $G$-fan. In this paper, we write one up to 7 degrees. Also, by using this method, we derive some walls independent of their degrees. Second, we find a certain admissible form of them. In the proof of these facts, we introduce a matrix action on a structure group, which we call a similarity transformation, and we argue the relation between this action and ordered products.
\end{abstract}
\tableofcontents
\section{Introduction}
\subsection{Background}
Cluster scattering diagrams (CSDs, for short) were introduced by \cite{GHKK18}. They have great effects on cluster algebra theory, which was intoroduced by \cite{FZ02}. For example, the sign coherence of $c$-vectors and the Laurent positivity, both of which are important properties of cluster algebras, were shown by using CSDs. Roughly speaking, a CSD $\mathfrak{D}$ is a set of walls, and a wall contains a certain element of the structure group $G$ of $\mathfrak{D}$, which is a non-abelian group. In particular, $G$ has dilogarithm elements $\Psi[n]$, which are defined in Definition~\ref{def: dirogarithm elements}, and they play an important role in CSDs. In this paper, we concentrate on CSDs of rank 2. We write $\Psi[n]$ by $\mindilog{a}{b}$ for $n=(a,b)$. Then, the consistency condition of a CSD of type $(\delta_1,\delta_2)$, which is the most fundamental property of a CSD, has the following form, where $u_{(a,b)}(\delta_1,\delta_2)$ are some nonnegative rational numbers:
\begin{equation}\label{eq: ordered product}
\dilog{0}{1}^{\delta_2}\dilog{1}{0}^{\delta_1}=\dilog{1}{0}^{\delta_1}\biggl\{\orderedprod_{\substack{j; (a_j,b_j) \in \mathbb{Z}_{\geq 1}^2}} \dilog{a_j}{b_j}^{u_{(a_j,b_j)}(\delta_1,\delta_2)}\biggr\} \dilog{0}{1}^{\delta_2}.
\end{equation}
The right hand side is a product such that $\frac{a_j}{b_j}\geq\frac{a_i}{b_i}$ for any $j<i$. It is called the {\em strongly ordered product expressions} of $\mindilog{0}{1}^{\delta_2}\mindilog{1}{0}^{\delta_1}$. Moreover, in \cite{Nak23}, it is known that the above equality is obtained by applying the pentagon relation (possibly infinitely many times):
\begin{equation}\label{eq: intro pentagon}
\dilog{x}{y}^{\gamma}\dilog{z}{w}^{\gamma}=\dilog{z}{w}^{\gamma}\dilog{x+z}{y+w}^{\gamma}\dilog{x}{y}^{\gamma},
\end{equation}
where $\gamma^{-1}=yz-xw$. The explicit value of $u_{(a,b)}(\delta_1,\delta_2)$ is well known when $\delta_1\delta_2 \leq 4$. When $\delta_1\delta_2 \leq 3$, a CSD is of finite type, and the product in (\ref{eq: ordered product}) is finite \cite{GHKK18, Nak23}. On the other hand, when $\delta_1\delta_2=4$, a CSD is of affine type, and the product in (\ref{eq: ordered product}) is infinite \cite{Rei12, Rea20, Nak23, Mat21}.  Also, the explicit forms of $u_{(a,b)}(\delta_1,\delta_2)$ are known in the case $\delta_1=\delta_2$ and $a=b$ \cite{Rei23}. However, they are known only few cases. In particular, when $\delta_1\delta_2 \geq 5$, there is the region which is complement of the $G$-fan (that is so called {\em the Badlands}). The walls in the $G$-fan correspond to the cluster algebra theory, inparticular, $c$-vectors and $g$-vectors \cite[Thm.~6.13]{Nak23}. On the other hand, the structure in the Badland is hardly known. It is expected that every $u_{(a,b)}(\delta_1,\delta_2)$ is positive for such $(a,b)$ belonging to the Badlands \cite{GHKK18, Nak23}.
\subsection{Main results and ideas}
In this paper, we treat $u_{(a,b)}(\delta_1,\delta_2)$ as a function of $\delta_1$ and $\delta_2$. The main purpose is to describe $u_{(a,b)}(\delta_1,\delta_2)$ explicitly. In order to emphasize that $u_{(a,b)}(\delta_1,\delta_2)$ is a function of $\delta_1$ and $\delta_2$, we write $\delta_1=m$ and $\delta_2=n$. Namely, we mainly consider $u_{(a,b)}(m,n)$ as a function of $(m,n) \in \mathbb{Z}_{\geq 0}^2$.
\par
The main idea to obtain some results for CSDs is the similality transformation, which is a group homomorphism defined by matrices $F \in \mathrm{Mat}_{2}(\mathbb{Z}_{\geq 0})$ with $|F| \neq 0$. By applying this action to dilogarithm elements, we have
\begin{equation}
F\dilog{a}{b}=\left[F\begin{pmatrix}a \\ b \\\end{pmatrix}\right]^{1/|F|},
\end{equation}
where $(a,b) \in \mathbb{Z}_{\geq 0}^2$. This action is compatible for the pentagon relation and ordered products. In particular, by applying this action to (\ref{eq: ordered product}), we have
\begin{equation}
\begin{aligned}
&\ \left(F\dilog{0}{1}\right)^{\delta_2}\left(F\dilog{1}{0}\right)^{\delta_1}\\
=&\ \left(F\dilog{1}{0}\right)^{\delta_1}\biggl\{\orderedprod_{\substack{j; (a_j,b_j) \in \mathbb{Z}_{\geq 1}^2}} \left(F\dilog{a_j}{b_j}\right)^{u_{(a_j,b_j)}(\delta_1,\delta_2)} \biggr\}\left(F\dilog{0}{1}\right)^{\delta_2}.
\end{aligned}
\end{equation}
This equality is the key to derive strongly ordered product expressions in lower degrees.
\par
Let us see the main results. As the first result, we give a method to calculate $u_{(a,b)}(m,n)$ explicitly in order of $a+b$ (Method~\ref{method: obtain the explicit forms}). More directly, we obtain the following recurrence relations:
\newtheorem*{recurrence}{Proposition~\ref{prop: recurrence}}
\begin{recurrence}
Let $l \in \mathbb{Z}_{\geq 1}$, and let $(a,b) \in N^{+}$ with $\deg(a,b)=l+1$. Let $C_{(m,1)}$ and $C_{(m,n)}$ be the products which is defined by (\ref{eq: C of (m,1)}) and (\ref{eq: C of (m,n)}), respectively. The following two statements hold.\\
\textup{(a)}\ By applying Algorithm~\ref{oa} to $C_{(m,1)}$ repeatedly, we give a method to obtain the recurrence relation:
\begin{equation}
u_{(a,b)}(m+1,1)=u_{(a,b)}(m,1)+p(m),
\end{equation}
where $p(m)$ is some polynomial in $m$.\\
\textup{(b)}\ By applying Algorithm~\ref{oa} to $C_{(m,n)}$ repeatedly, we obtain the recurrence relation:
\begin{equation}
u_{(a,b)}(m,n+1)=u_{(a,b)}(m,n)+u_{(a,b)}(m,1)+p'(m,n),
\end{equation}
where $p'(m,n)$ is some polynomial in $m$ and $n$.
\par
Moreover, $p(m)$ and $p'(m,n)$ are determined by the data of $u_{(x,y)}(m,n)$ with $\deg(x,y)\leq l$ as functions of $m$ and $n$.
\end{recurrence}
More strongly, we can show that $\gcd(a,b)p'(m,n)$ can be expressed as the following form:
\begin{equation}
\gcd(a,b)p'(m,n)=\sum_{\substack{0 \leq k \leq A,\\ 0 \leq l \leq B}} \alpha_{k,l}\binom{m}{k}\binom{n}{l}\quad(A,B,\alpha_{k,l} \in \mathbb{Z}_{\geq 0}).
\end{equation}
In particular, polynomials of the above form are often used in this paper. We name them {\em polynomials in binomial coefficients} ({\em PBCs}, for short), and we derive some their properties in Section~\ref{Sec: PBC}. By using this method up to $a+b \leq 5$, we obtain the following explicit forms:
\begin{equation*}
\begin{aligned}
&\quad\dilog{0}{1}^n \dilog{1}{0}^m\\
&\equiv\dilog{1}{0}^{m}\dilog{4}{1}^{\bi{m}{4}\bi{n}{1}} \dilog{3}{1}^{\bi{m}{3}\bi{n}{1}}\dilog{2}{1}^{\bi{m}{2}\bi{n}{1}}\\
&\times\dilog{3}{2}^{2\bi{m}{2}\bi{n}{2}+\bi{m}{3}\bi{n}{1}+6\bi{m}{3}\bi{n}{2}}\dilog{1}{1}^{\bi{m}{1}\bi{n}{1}}\\
&\times\dilog{2}{2}^{2\bi{m}{2}\bi{n}{2}}\dilog{2}{3}^{\bi{m}{1}\bi{n}{3}+2\bi{m}{2}\bi{n}{2}+6\bi{m}{2}\bi{n}{3}} \dilog{1}{2}^{\bi{m}{1}\bi{n}{2}}\\
&\times\dilog{1}{3}^{\bi{m}{1}\bi{n}{3}} \dilog{1}{4}^{\bi{m}{1}\bi{n}{4}} \dilog{0}{1}^{n} \hspace{100pt} \mod G^{>5}.
\end{aligned}
\end{equation*}
In principle, we may proceed to any order $a+b$. However, the calculation of this method becomes complicated rapidly when the order is larger. In Example~\ref{ex: explicit forms}, we write one up to $a+b \leq 7$. 
\par
As the second result, we give the following restriction for a possible form of $u_{(a,b)}(m,n)$.
\newtheorem*{main theorem1}{Theorem~\ref{main thm1}}
\begin{main theorem1}
Let $a$ and $b$ be positive integers. Then, we express
\begin{equation*}
u_{({a},{b})}(m,n) = \gcd(a,b)^{-1} \sum_{\substack{1 \leq i \leq a,\\ 1 \leq j \leq b}} \alpha_{({a},{b})}(i,j) \binom{m}{i} \binom{n}{j},
\end{equation*}
where $\alpha_{({a},{b})}(i,j)$ are nonnegative integers independent of $m$ and $n$.
\end{main theorem1}
Thus, for each $(a,b)$, if we determine the $ab$ factors $\alpha_{(a,b)}(i,j)$, $u_{(a,b)}(m,n)$ is completely determined. Moreover, by the following claim, the special values $u_{(a,b)}(k,l)$, where $1 \leq k \leq a$ and $1 \leq l \leq b$, suffice to determine $u_{(a,b)}(m,n)$ as a function of $(m,n) \in \mathbb{Z}_{\geq 0}^2$.
\newtheorem*{inverse formula}{Proposition~\ref{prop: inverse formula}}
\begin{inverse formula}
Let $a$ and $b$ be positive integers. Then, for any $1 \leq k \leq a$ and $1 \leq l \leq b$, it holds that
\begin{equation*}
\gcd(a,b)^{-1}\alpha_{(a,b)}(k,l) = \sum_{\substack{1 \leq i \leq k,\\ 1 \leq j \leq l}} (-1)^{i+j+k+l} \binom{k}{i} \binom{l}{j} u_{(a,b)}(i,j).
\end{equation*}
\end{inverse formula}
\par
Last, we obtain the general formula of $u_{(a,2)}(m,n)$ as follows.
\newtheorem*{b=2}{Theorem~\ref{b=2}}
\begin{b=2}
For any $a \in \mathbb{Z}_{> 0}$, we have
\begin{equation}\label{relation b=2}
\begin{aligned}
&u_{(a,2)}(m,n)\\
=\ &\sum_{\frac{a}{2} < k \leq a}\left\lceil \frac{2k-a}{2} \right\rceil\binom{2k-a}{\left\lceil \frac{2k-a}{2} \right\rceil}\binom{k}{2k-a} \binom{m}{k}\binom{n}{2}\\
\ &\qquad+\sum_{\frac{a}{2}+1<k \leq a}\left\{\frac{2k-a}{2}\binom{2k-a-1}{\lceil\frac{2k-a-1}{2}\rceil}-2^{2k-a-2}\right\}\binom{k}{2k-a} \binom{m}{k}\binom{n}{1}.
\end{aligned}
\end{equation}
In the above relation, $\lceil x \rceil$ is the least integer more than or equal to $x \in \mathbb{Q}$.
\end{b=2}
\subsection{The structure of the paper}
As we can see from Theorem~\ref{main thm1}, binomial coefficients play an important role in this paper. In Section~\ref{Sec: PBC}, we show some equalities and properties which we use later.
\par
In Section~\ref{Sec: DEs and CSDs of rank 2}, we recall the definitions and notations of CSDs.
\par
In Section~\ref{Sec: Similarity transformations}, we introduce a similarity transformation, and consider the relation between a similarity transformation and ordering products.
\par
In Section~\ref{Sec: Construction}, we give the method to derive $u_{(a,b)}(m,n)$ explicitly.
\par
The latter sections, we give some properties of $u_{(a,b)}(m,n)$. The most contents are independent from each other.

\subsection*{Acknowledgement}
The author is greateful to Professor Tomoki Nakanishi for careful reading and useful advices and comments. The author thank Peigen Cao for important advices, in particular, the similarity transformations in Section~\ref{Sec: Similarity transformations}. Many statements and their proofs are clarified by thier advices.

\section{Polynomials in binomial coefficients}\label{Sec: PBC}
Binomial coefficients play an important role in exponents of ordering products. So, in this section, we prove some equalities to use later.
\begin{dfn}\label{dfn: PBC}
Let $m \in \mathbb{Z}$ and $k \in \mathbb{Z}_{\geq 0}$. Then, the binomial coefficients are defined by
\begin{eqnarray}
\binom{m}{k} = \begin{cases}
\frac{m(m-1)(m-2)\cdots(m-k+1)}{k(k-1)\cdots2\cdot1} & k \geq 1,\\
1 & k=0.\\
\end{cases}
\end{eqnarray}
We may also view them as polynomials in an indeterminate $m$. In this case, the degree of $\binom{m}{k} \in \mathbb{Q}[m]$ is $k$. Then, we call the following polynomial $f$ in $m$ and $n$ a {\em polynomial in binomial coefficients} ({\em PBC} for short).
\begin{eqnarray}
f(m,n) = \sum_{\begin{smallmatrix}0 \leq k,l\end{smallmatrix}} \alpha_{k,l} \binom{m}{k} \binom{n}{l} \in \mathbb{Q}[m,n] \quad(\alpha_{k,l} \in \mathbb{Z}).
\end{eqnarray}
Moreover, if $\alpha_{k,l} \geq 0$ for any $k$ and $l$, we call $f$ a {\em nonnegative PBC}. For any nonnegative PBC $f$, if $f \neq 0$ as a polynomial, we call $f$ a {\em positive PBC}.
\end{dfn}
By definition, a sum of two nonnegative (resp. positive) PBCs is also a nonnegative (resp. positive) PBC. Also, the equalities $(m+1)\binom{m}{k}=(k+1)\binom{m+1}{k}$ and $(m-k)\binom{m}{k}=(k+1)\binom{m}{k+1}$ hold.
\par
When $m$ and $n$ are viewed as indeterminates, the set $\left\{\binom{m}{k}\binom{n}{l}\right\}_{0 \leq k,l} $ is a basis of $\mathbb{Q}[m,n]$. So, every polynomial $f(m,n) \in \mathbb{Q}[m,n]$ can be expressed as $f(m,n) = \sum_{0 \leq k,l} \gamma_{k,l} \binom{m}{k}\binom{n}{l}$, where $\gamma_{k,l} \in \mathbb{Q}$. By the following claim, we may distinguish PBCs from mere polynomials.
\begin{lem}\label{nnPBClem}
Let $f(m,n)$ be a polynomial. Then, the following two conditions are equivalent.
\begin{itemize}
\item[\textup{(a)}] A polynomial $f(m,n)$ is a PBC.
\item[\textup{(b)}] For any $(u,v) \in \mathbb{Z}_{\geq 0}$, $f(u,v) \in \mathbb{Z}$ holds.
\end{itemize}
\end{lem}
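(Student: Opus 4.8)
The plan is to prove the two implications separately, with essentially all the work in \textup{(b)}$\Rightarrow$\textup{(a)}. The implication \textup{(a)}$\Rightarrow$\textup{(b)} is immediate: if $f=\sum_{k,l}\alpha_{k,l}\binom{m}{k}\binom{n}{l}$ with $\alpha_{k,l}\in\mathbb{Z}$, then for $(u,v)\in\mathbb{Z}_{\geq 0}^2$ each $\binom{u}{k}$ is an integer (it counts the $k$-element subsets of a $u$-element set when $u\geq k$, and equals $0$ when $0\leq u<k$), so every term of $f(u,v)$ lies in $\mathbb{Z}$ and hence so does $f(u,v)$.

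For the converse, I would first use the fact recalled just before the statement, that $\{\binom{m}{k}\binom{n}{l}\}_{0\leq k,l}$ is a basis of the space of polynomials in $m$ and $n$, to write $f(m,n)=\sum_{k,l}\gamma_{k,l}\binom{m}{k}\binom{n}{l}$ with all but finitely many $\gamma_{k,l}$ equal to $0$; it then suffices to show every $\gamma_{k,l}$ is an integer. The tool is the pair of commuting forward-difference operators $(\Delta_m g)(m,n)=g(m+1,n)-g(m,n)$ and $(\Delta_n g)(m,n)=g(m,n+1)-g(m,n)$. The key computation, carried out from the Pascal-type identity $\binom{m+1}{k}-\binom{m}{k}=\binom{m}{k-1}$ (a polynomial identity in $m$, valid for all $k\geq 0$ under the convention $\binom{m}{i}\equiv 0$ for $i<0$), is that $\Delta_m^{a}\binom{m}{k}=\binom{m}{k-a}$, and likewise for $\Delta_n$. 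Since $\binom{0}{i}$ equals $1$ for $i=0$ and $0$ for $i\geq 1$, evaluating $\Delta_m^{a}\Delta_n^{b}f$ at $(m,n)=(0,0)$ isolates a single coefficient: $\bigl(\Delta_m^{a}\Delta_n^{b}f\bigr)(0,0)=\gamma_{a,b}$ for all $a,b\geq 0$.

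To finish, I would observe that condition \textup{(b)} is preserved by $\Delta_m$ and $\Delta_n$: if $g$ takes integer values on $\mathbb{Z}_{\geq 0}^2$, then so does $\Delta_m g$ (because $(u+1,v)\in\mathbb{Z}_{\geq 0}^2$ whenever $(u,v)\in\mathbb{Z}_{\geq 0}^2$), and similarly $\Delta_n g$. Hence, by induction on $a+b$, the polynomial $\Delta_m^{a}\Delta_n^{b}f$ takes integer values on $\mathbb{Z}_{\geq 0}^2$; in particular $\gamma_{a,b}=\bigl(\Delta_m^{a}\Delta_n^{b}f\bigr)(0,0)\in\mathbb{Z}$, so $f$ is a PBC. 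Expanding the difference operators also yields the explicit inversion $\gamma_{a,b}=\sum_{0\leq i\leq a,\ 0\leq j\leq b}(-1)^{a+b-i-j}\binom{a}{i}\binom{b}{j}f(i,j)$, which recovers each coefficient as an integer combination of the integer values $f(i,j)$ and incidentally shows the statement holds for polynomials with arbitrary (e.g. real) coefficients.

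I do not expect a serious obstacle: this is the two-variable version of the classical finite-difference inversion. The only points demanding care are the conventions for binomial coefficients with zero or negative lower argument when $\binom{m}{k}$ is read as a polynomial, and verifying the index-shift identity $\Delta_m^{a}\binom{m}{k}=\binom{m}{k-a}$ cleanly enough that evaluation at the origin picks off exactly $\gamma_{a,b}$ and nothing else.
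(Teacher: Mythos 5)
Your proof is correct, but it takes a different route from the paper's. For \textup{(b)}$\Rightarrow$\textup{(a)}, the paper argues by contradiction: it picks the lexicographically first coefficient $\alpha_{k_0,l_0}\notin\mathbb{Z}$ and evaluates $f$ at the single point $(k_0,l_0)$, where the unitriangularity of the matrix $\bigl(\binom{k_0}{k}\binom{l_0}{l}\bigr)$ forces $f(k_0,l_0)=(\text{integer})+\alpha_{k_0,l_0}\notin\mathbb{Z}$, a contradiction. You instead run the two-variable finite-difference inversion: $\Delta_m^{a}\Delta_n^{b}$ shifts the basis via $\Delta_m^a\binom{m}{k}=\binom{m}{k-a}$, evaluation at the origin isolates $\gamma_{a,b}$, and integrality of values is preserved under $\Delta_m,\Delta_n$. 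Both arguments are complete; yours is constructive rather than by contradiction, and the closed formula $\gamma_{a,b}=\sum_{i\leq a,\,j\leq b}(-1)^{a+b-i-j}\binom{a}{i}\binom{b}{j}f(i,j)$ that falls out of it is essentially the inverse formula the paper proves separately later (Lemma~\ref{lem: PBCinverse} and Proposition~\ref{prop: inverse formula}), so your method would unify the two. The only points requiring the care you already flag are the conventions $\binom{m}{i}\equiv 0$ for $i<0$ and $\binom{0}{j}=\delta_{j,0}$ for $j\geq 0$, which make the evaluation at the origin pick off exactly one coefficient.
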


\begin{proof}
$\textup{(a)} \Rightarrow \textup{(b)}$ is immediately shown by $\binom{u}{k} \in \mathbb{Z}$ for any $u,k \in \mathbb{Z}_{\geq 0}$. We show $\neg\textup{(a)} \Rightarrow \neg\textup{(b)}$. Suppose that a polynomial $f(m,n)=\sum_{0 \leq k,l} \alpha_{k,l}\binom{m}{k}\binom{n}{l}$ has non-integer coefficients $\alpha_{i,j} \notin \mathbb{Z}$. We chose $(i_0,j_0)$ as $i+j$ is the smallest among such $(i,j)$. Consider $f(i_0,j_0)$. Then, we have
\begin{equation}
\begin{aligned}
f(i_0,j_0) &= \sum_{0 \leq k,l} \alpha_{k,l} \binom{i_0}{k} \binom{j_0}{l}\\
&=\sum_{\substack{0 \leq k \leq i_0,\\ 0 \leq l \leq j_0}} \alpha_{k,l} \binom{i_0}{k} \binom{j_0}{l}\quad\left(\binom{i_0}{k}=0\ \textup{if}\ i_0<k\right)\\
&=\alpha_{i_0,j_0}+\sum_{\substack{0 \leq k \leq i_0,\\ 0 \leq l \leq j_0,\\(k,l) \neq (i_0,j_0)}} \alpha_{k,l} \binom{i_0}{k} \binom{j_0}{l}.
\end{aligned}
\end{equation}
The second term on the RHS is an integer since $\alpha_{k,l} \in \mathbb{Z}$ for any $(k,l)$ in the sum. By the assumption, the first term $\alpha_{i_0,j_0}$ is not an integer. Thus, $f(i_0,j_0) \notin \mathbb{Z}$ holds.
\end{proof}
For any $a,k \in \mathbb{Z}_{\geq 0}$, we can easily check the following equalities as polynomials in $m$ and $n$ (e.g., \cite[Identity~1, Identity~11, Identity~57]{Spi19}).
\begin{eqnarray}
\label{a}
\binom{m}{k}+\binom{m}{k+1}&=&\binom{m+1}{k+1},\\
\label{c}
m\binom{m}{k} &=& (k+1)\binom{m}{k+1} + k\binom{m}{k},\\
\label{binomial m+n}
\binom{m+n}{a} &=& \sum_{\begin{smallmatrix}0 \leq a_1,a_2\\ a_1+a_2=a\end{smallmatrix}} \binom{m}{a_1} \binom{n}{a_2}.
\end{eqnarray}
Moreover, the following equality holds when $m \in \mathbb{Z}_{\geq 1}$ and $k \in \mathbb{Z}_{\geq 0}$ (e.g., \cite[Identity~58]{Spi19}).
\begin{equation}\label{sum of BCs}
\sum_{j=0}^{m-1} \binom{j}{k} = \binom{m}{k+1}.
\end{equation}
The following lemma is a generalization of (\ref{c}).
\begin{lem}[Product formula]\label{binomial lemma}
For any $s,r \in \mathbb{Z}_{\geq 0}$, the following equalities hold as the element of $\mathbb{Q}[m]$.\\
\begin{equation}\label{binomial lemma 2-prod}
\begin{aligned}
\binom{m}{s}\binom{m}{r} =& \sum_{0 \leq k \leq s} \binom{s}{k} \binom{r+k}{s} \binom{m}{r+k},\\
=& \sum_{\max(s,r) \leq k \leq s+r} \binom{s}{k-r} \binom{k}{s} \binom{m}{k}.
\end{aligned}
\end{equation}
Moreover, for any $s,s',r,r' \in \mathbb{Z}_{\geq 0}$, $\displaystyle{\left\{\binom{m}{s}\binom{n}{s'}\right\}\cdot\left\{\binom{m}{r}\binom{n}{r'}\right\}}$ is a positive PBC in $m$ and $n$.
\end{lem}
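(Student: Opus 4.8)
The plan is to prove the single-variable identity \eqref{binomial lemma 2-prod} first, and then deduce the two-variable positivity claim by multiplying together two instances of it and applying \eqref{binomial m+n} is not quite what we need — rather, the two-variable claim follows directly once \eqref{binomial lemma 2-prod} is established, since $\binom{m}{s}\binom{m}{r}$ being a nonnegative combination of $\binom{m}{k}$ and $\binom{n}{s'}\binom{n}{r'}$ being a nonnegative combination of $\binom{n}{k'}$ gives, upon taking the product, a nonnegative combination of $\binom{m}{k}\binom{n}{k'}$. So the heart of the matter is the one-variable identity.

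For \eqref{binomial lemma 2-prod}, I would argue combinatorially. Interpret $\binom{m}{s}\binom{m}{r}$ as the number of pairs $(A,B)$ with $A,B \subseteq \{1,\dots,m\}$, $|A|=s$, $|B|=r$; here $m$ is first taken to be a nonnegative integer, and the identity between polynomials then follows because two polynomials agreeing at all $m \in \mathbb{Z}_{\geq 0}$ are equal. Classify each pair by $k := |A \cup B|$, which ranges over $\max(s,r) \le k \le s+r$. To build a pair with $|A\cup B| = k$: choose the union, a $k$-subset of $\{1,\dots,m\}$, in $\binom{m}{k}$ ways; then choose which $s$ of those $k$ elements lie in $A$, in $\binom{k}{s}$ ways; finally the elements of $B$ are exactly $A\cup B$ minus $(A\setminus B)$, and $|A\setminus B| = k-r$, so we must pick the $k-r$ elements of $A$ that are \emph{not} in $B$ from among the $s$ elements of $A$, in $\binom{s}{k-r}$ ways. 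This yields the second line of \eqref{binomial lemma 2-prod}. The first line is the same count reindexed by $k \mapsto r+k$ (so the new summation variable $k$ is $|A\setminus B|$, running $0 \le k \le s$), using $\binom{s}{k}$ for the choice of $A\setminus B$ inside $A$ after $A$ itself is... — more cleanly, one simply substitutes $k' = k-r$ into the second line: $\binom{s}{k'}\binom{r+k'}{s}\binom{m}{r+k'}$ with $0 \le k' \le s$, which is exactly the first line.

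The two-variable positivity statement is then immediate: write $\binom{m}{s}\binom{m}{r} = \sum_k c_k \binom{m}{k}$ and $\binom{n}{s'}\binom{n}{r'} = \sum_{k'} c'_{k'}\binom{n}{k'}$ with $c_k, c'_{k'} \in \mathbb{Z}_{\geq 0}$ by \eqref{binomial lemma 2-prod}; multiplying gives $\sum_{k,k'} c_k c'_{k'} \binom{m}{k}\binom{n}{k'}$, a nonnegative integer combination of basis elements, hence a nonnegative PBC. It is nonzero — hence positive in the sense of Definition~\ref{dfn: PBC} — because the leading term $\binom{m}{s+r}\binom{n}{s'+r'}$ has a nonzero (indeed positive) coefficient, namely the product of the top coefficients from each factor, which are $\binom{s}{s}\binom{s+r}{s} = \binom{s+r}{s} \ge 1$ and similarly $\binom{s'+r'}{s'} \ge 1$.

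I do not anticipate a serious obstacle here; the only mild care needed is the polynomial-versus-integer bookkeeping — establishing the identity for integer $m$ by the set-pair count and then invoking the fact that a polynomial identity valid on $\mathbb{Z}_{\geq 0}$ holds identically — and keeping the two reindexings straight. If one prefers a purely algebraic route over the bijective one, \eqref{binomial lemma 2-prod} can alternatively be derived by repeated application of \eqref{c} (of which it is billed as a generalization) together with \eqref{a}, but the combinatorial proof is shorter and makes the nonnegativity of the coefficients manifest, which is exactly the point.
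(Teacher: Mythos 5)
Your proof is correct, but it takes a genuinely different route from the paper's for the key identity \eqref{binomial lemma 2-prod}. The paper proves the first line by induction on $s$, writing $\binom{m}{s+1}\binom{m}{r}=\frac{m-s}{s+1}\binom{m}{s}\binom{m}{r}$ and then pushing the factor $m$ through the inductive expansion via \eqref{c}, reindexing, and finally recombining with Pascal's rule \eqref{a}; it is a purely algebraic computation of about a page. You instead count pairs $(A,B)$ with $|A|=s$, $|B|=r$ stratified by $k=|A\cup B|$, which gives the second line directly with manifestly nonnegative coefficients, and then invoke the standard fact that a polynomial identity valid on $\mathbb{Z}_{\geq 0}$ holds identically; the passage between the two lines by the substitution $k\mapsto k-r$ is the same in both treatments (in both cases the lower summation limits differ harmlessly because $\binom{r+k}{s}=0$ for $r+k<s$). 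Your combinatorial argument is shorter and makes the nonnegativity transparent, which is the point of the lemma; the paper's induction has the mild advantage of staying entirely within the algebra of the recurrences \eqref{a} and \eqref{c} that it has already set up. For the two-variable statement both arguments coincide (multiply the two one-variable expansions), and you are slightly more careful than the paper in explicitly checking that the result is nonzero, hence a \emph{positive} PBC, by exhibiting the positive top coefficient $\binom{s+r}{s}\binom{s'+r'}{s'}$.
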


\begin{proof}
The equality
\begin{equation}
\sum_{0 \leq k \leq s} \binom{s}{k} \binom{r+k}{s} \binom{m}{r+k} = \sum_{\max(s,r) \leq k \leq s+r} \binom{s}{k-r} \binom{k}{s} \binom{m}{k}
\end{equation}
can be shown by replacing $k$ with $k-r$. We prove the first one by the induction on $s$. If $s=0$, the equality is obvious. (Both sides are equal to $\binom{m}{r}$.) We assume that $\binom{m}{s}\binom{m}{r} = \sum_{0 \leq k \leq s} \binom{s}{k} \binom{r+k}{s} \binom{m}{r+k}$ for some $s$. Then, by the inductive assumption, we have
\begin{equation}
\begin{aligned}\label{2.3-1}
&\binom{m}{s+1}\binom{m}{r}=\frac{m-s}{s+1}\binom{m}{s}\binom{m}{r}
=\frac{m-s}{s+1}\sum_{0 \leq k \leq s} \binom{s}{k} \binom{r+k}{s} \binom{m}{r+k}.\\
\end{aligned}
\end{equation}
By using (\ref{c}), we have
\begin{equation}\label{eq: 12'}
m\binom{m}{r+k}=(r+k+1)\binom{m}{r+k+1}+(r+k)\binom{m}{r+k}.
\end{equation}
Thus, (\ref{2.3-1}) can be rearrenged to the following form:
\begin{equation}\label{eq: 2.3-3}
\begin{aligned}
\binom{m}{s+1}\binom{m}{r}
=&\ \frac{1}{s+1}\left\{\sum_{0 \leq k \leq s} \binom{s}{k} \binom{r+k}{s} m\binom{m}{r+k}\right.\\
&\ \qquad-\left.\sum_{0 \leq k \leq s} s\binom{s}{k} \binom{r+k}{s} \binom{m}{r+k}\right\}\\
\overset{(\ref{eq: 12'})}{=}&\ \frac{1}{s+1}\left\{\sum_{0 \leq k \leq s} \binom{s}{k} \binom{r+k}{s} (r+k+1)\binom{m}{r+k+1}\right.\\
&\ \qquad+\left.\sum_{0 \leq k \leq s}(r+k-s)\binom{s}{k} \binom{r+k}{s}\binom{m}{r+k}\right\}.\\
\end{aligned}
\end{equation}
Now, the first term on the RHS can be written as follows:
\begin{equation}\label{eq: prod 1}
\begin{aligned}
&\sum_{0 \leq k \leq s} \binom{s}{k} \binom{r+k}{s} (r+k+1)\binom{m}{r+k+1}\\
=\ &\sum_{1 \leq k \leq s+1} \binom{s}{k-1} \binom{r+k-1}{s} (r+k) \binom{m}{r+k}\\
=\ &\binom{r+s}{s}(r+s+1)\binom{m}{r+s+1}\\
&\qquad+\sum_{1 \leq k \leq s} \binom{s}{k-1} \binom{r+k-1}{s} (r+k) \binom{m}{r+k}
\end{aligned}
\end{equation}
Since $(r+s+1)\binom{r+s}{s}=(s+1)\binom{r+s+1}{s+1}$ and $(r+k)\binom{r+k-1}{s}=(s+1)\binom{r+k}{s+1}$, we have
\begin{equation}
\begin{aligned}
&\sum_{0 \leq k \leq s} \binom{s}{k} \binom{r+k}{s} (r+k+1)\binom{m}{r+k+1}\\
=\ &(s+1)\binom{r+s+1}{s+1}\binom{m}{r+s+1}\\
\ &\qquad+\sum_{1 \leq k \leq s} (s+1)\binom{s}{k-1} \binom{r+k}{s+1}\binom{m}{r+k}.\\
\end{aligned}
\end{equation}
Similarly, by using $(r+k-s)\binom{r+k}{s}=(s+1)\binom{r+k}{s+1}$, the second term on the RHS can be written as follows:
\begin{equation}
\begin{aligned}
&\sum_{0 \leq k \leq s}(r+k-s)\binom{s}{k} \binom{r+k}{s}\binom{m}{r+k}\\
=\ &(s+1)\sum_{0 \leq k \leq s}\binom{s}{k} \binom{r+k}{s+1}\binom{m}{r+k}\\
=\ &(s+1)\binom{r}{s+1}\binom{m}{r} + (s+1)\sum_{1 \leq k \leq s}\binom{s}{k} \binom{r+k}{s+1}\binom{m}{r+k}.
\end{aligned}
\end{equation}
Hence, putting these expressions to the last line of (\ref{eq: 2.3-3}), we have
\begin{equation}
\begin{aligned}
&\binom{m}{s+1}\binom{m}{r}\\
=\ & \binom{r+s+1}{s+1}\binom{m}{r+s+1}+ \binom{r}{s+1}\binom{m}{r}\\
\ &\qquad +\sum_{1 \leq k \leq s} \left\{\binom{s}{k-1}+\binom{s}{k}\right\} \binom{r+k}{s+1}\binom{m}{r+k}\\
\overset{(\ref{a})}{=}\ &\binom{r+s+1}{s+1}\binom{m}{r+s+1}+ \binom{r}{s+1}\binom{m}{r}\\
\ &\qquad +\sum_{1 \leq k \leq s} \binom{s+1}{k}\binom{r+k}{s+1}\binom{m}{r+k}\\
=\ &\sum_{0 \leq k \leq s+1} \binom{s+1}{k}\binom{r+k}{s+1}\binom{m}{r+k}.
\end{aligned}
\end{equation}
The second statement follows from the following:
\begin{equation}
\begin{aligned}
&\ \left\{\binom{m}{s}\binom{n}{s'}\right\}\cdot\left\{\binom{m}{r}\binom{n}{r'}\right\}=\left\{\binom{m}{s}\binom{m}{r}\right\}\cdot\left\{\binom{n}{s'}\binom{n}{r'}\right\}\\
=&\ \sum_{\max(s,r) \leq k \leq s+r} \binom{s}{k-r} \binom{k}{s} \binom{m}{k}\sum_{\max(s',r') \leq l \leq s'+r'} \binom{s'}{k'-r'} \binom{l}{s'} \binom{n}{l}\\
=&\ \sum_{k,l} \left\{\binom{s}{k-r} \binom{k}{s} \binom{s'}{l-r'} \binom{l}{s'}\right\} \binom{m}{k}\binom{n}{l},
\end{aligned} 
\end{equation}
where $\binom{s}{k-r} \binom{k}{s} \binom{s'}{l-r'} \binom{l}{s'} \geq 0$.
\end{proof}

\begin{cor}\label{preserve the product}
Let $f_1(m,n),f_2(m,n),\dots,f_r(m,n) \in \mathbb{Q}[m,n]$ be positive PBCs. Then, $\prod_{j=1}^r f_{j}(m,n)$ is also a positive PBC.
\end{cor}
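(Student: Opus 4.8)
The plan is to induct on $r$, with the case $r=2$ being the heart of the matter and an immediate consequence of Lemma~\ref{binomial lemma}. For $r=1$ there is nothing to prove. Assuming the statement for products of $r-1$ positive PBCs, it suffices to show that the product $f\cdot g$ of two positive PBCs $f$ and $g$ is again a positive PBC, since then
\[
\prod_{j=1}^r f_j(m,n) = \left(\prod_{j=1}^{r-1} f_j(m,n)\right)\cdot f_r(m,n)
\]
is a product of two positive PBCs (the first factor being positive by the inductive hypothesis).

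So write $f(m,n) = \sum_{k,l} \alpha_{k,l}\binom{m}{k}\binom{n}{l}$ and $g(m,n) = \sum_{s,t} \beta_{s,t}\binom{m}{s}\binom{n}{t}$ with $\alpha_{k,l},\beta_{s,t}\in\mathbb{Z}_{\geq 0}$. Expanding the product,
\[
f(m,n)g(m,n) = \sum_{k,l,s,t} \alpha_{k,l}\beta_{s,t}\left\{\binom{m}{k}\binom{n}{l}\right\}\cdot\left\{\binom{m}{s}\binom{n}{t}\right\}.
\]
By the second statement of Lemma~\ref{binomial lemma}, each factor $\left\{\binom{m}{k}\binom{n}{l}\right\}\cdot\left\{\binom{m}{s}\binom{n}{t}\right\}$ is a PBC all of whose coefficients in the basis $\{\binom{m}{k}\binom{n}{l}\}_{0\leq k,l}$ are nonnegative integers; multiplying by the nonnegative integer $\alpha_{k,l}\beta_{s,t}$ and summing over $k,l,s,t$, and using that a sum of nonnegative PBCs is a nonnegative PBC (as noted after Definition~\ref{dfn: PBC}), we conclude that $fg$ is a nonnegative PBC. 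Finally, since $f$ and $g$ are positive they are nonzero elements of the integral domain $\mathbb{Q}[m,n]$, so $fg\neq 0$ as a polynomial; hence $fg$ is a positive PBC, which closes the induction.

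The only point that requires any care is that the monomial basis $\{\binom{m}{k}\binom{n}{l}\}$ is not manifestly closed under multiplication with nonnegative coefficients — a product of two basis elements could a priori acquire negative coefficients when re-expanded in this basis. This is precisely what Lemma~\ref{binomial lemma} rules out, by giving the explicit expansion of $\binom{m}{s}\binom{m}{r}$ with nonnegative integer coefficients; once that is available, the corollary is purely formal, so I do not expect a genuine obstacle here beyond bookkeeping.
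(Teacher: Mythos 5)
Your proposal is correct and follows essentially the same route as the paper: reduce to the case $r=2$, expand the product in the basis $\{\binom{m}{k}\binom{n}{l}\}$, and invoke the second statement of Lemma~\ref{binomial lemma} to see that each product of basis elements is a nonnegative PBC. Your added remarks on the induction step and on nonvanishing in the integral domain $\mathbb{Q}[m,n]$ are details the paper leaves implicit, but they do not change the argument.
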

\begin{proof}
It suffices to show the case $r=2$. Let $f(m,n)=\sum_{0 \leq k,l} \alpha_{k,l} \binom{m}{k}\binom{n}{l}$ and $g(m,n) = \sum_{0 \leq k',l'} \beta_{k',l'}\binom{m}{k'}\binom{n}{l'}$, where $\alpha_{k,l}, \beta_{k',l'} \in \mathbb{Z}_{\geq 0}$. Then,
\begin{equation}
f(m,n)g(m,n)=\sum_{0 \leq k,k',l,l'} \alpha_{k,l}\beta_{k',l'} \binom{m}{k}\binom{n}{l}\binom{m}{k'}\binom{n}{l'}.
\end{equation}
By Lemma \ref{binomial lemma}, every $\binom{m}{k}\binom{n}{l}\binom{m}{k'}\binom{n}{l'}$ is a positive PBC. So, $f(m,n)g(m,n)$ is a positive PBC.
\end{proof}
For any PBC $f(m,n)$,  we may consider a composition
\begin{equation}
\binom{f(m,n)}{k}=\frac{f(m,n)(f(m,n)-1)\cdots(f(m,n)-k+1)}{k\cdot(k-1)\cdots2\cdot1}.
\end{equation}
Then, we have a following decomposition.
\begin{lem}\label{PBC lem-1}
Let $a$ be a nonnegative integer, and let $f(m,n)=\sum_{j=1}^{r} u_{j} \binom{m}{k_j}\binom{n}{l_j}\ (u_j,k_j,l_j \in \mathbb{Z}_{\geq 0})$ be a positive PBC. Then, the following equality holds.
\begin{equation}
\binom{f(m,n)}{a} = \sum_{\begin{smallmatrix}0 \leq a_j, \\ \sum_{j} a_j = a \end{smallmatrix}} \prod_{j=1}^r\binom{u_{j}\binom{m}{k_j}\binom{n}{l_j}}{a_j}.
\end{equation}
\end{lem}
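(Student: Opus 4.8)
The plan is to deduce the identity from the Vandermonde convolution (\ref{binomial m+n}), read as a polynomial identity, together with an induction on $r$. The key preliminary remark is that (\ref{binomial m+n}) is an identity in $\mathbb{Q}[m,n]$, hence stays valid under any ring substitution: renaming the indeterminates as $X,Y$ it reads $\binom{X+Y}{a}=\sum_{a_1+a_2=a}\binom{X}{a_1}\binom{Y}{a_2}$ in $\mathbb{Q}[X,Y]$, and applying the $\mathbb{Q}$-algebra homomorphism $\mathbb{Q}[X,Y]\to\mathbb{Q}[m,n]$, $X\mapsto h$, $Y\mapsto g$, for any $g,h\in\mathbb{Q}[m,n]$ gives
\[
\binom{h+g}{a}=\sum_{\substack{0\le b,c,\\ b+c=a}}\binom{h}{b}\binom{g}{c}
\]
in $\mathbb{Q}[m,n]$, where $\binom{h}{b}$ and $\binom{g}{c}$ denote the compositions defined just before the statement. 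The only thing being invoked is that those compositions are genuine elements of $\mathbb{Q}[m,n]$, so substituting them into a polynomial identity is legitimate.

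Granting this, I would prove the lemma by induction on $r$. For $r=1$ the right-hand side is the single summand indexed by $a_1=a$, namely $\binom{u_1\binom{m}{k_1}\binom{n}{l_1}}{a}=\binom{f(m,n)}{a}$, so the base case is immediate. For the inductive step, write $f(m,n)=h(m,n)+g(m,n)$ with $g(m,n)=u_r\binom{m}{k_r}\binom{n}{l_r}$ and $h(m,n)=\sum_{j=1}^{r-1}u_j\binom{m}{k_j}\binom{n}{l_j}$. The displayed Vandermonde identity gives $\binom{f(m,n)}{a}=\sum_{b+a_r=a}\binom{h(m,n)}{b}\binom{g(m,n)}{a_r}$; expanding each $\binom{h(m,n)}{b}$ by the induction hypothesis and then merging the sum over $a_r$ with the sum over tuples $(a_1,\dots,a_{r-1})$ with $\sum_{j=1}^{r-1}a_j=b$ yields precisely the single sum over all $(a_1,\dots,a_r)$ with $a_j\ge 0$ and $\sum_{j=1}^{r}a_j=a$ of $\prod_{j=1}^{r}\binom{u_j\binom{m}{k_j}\binom{n}{l_j}}{a_j}$, which is the asserted formula.

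There is no hard computational core here: the only point that needs care is the legitimacy of using (\ref{binomial m+n}) with polynomial arguments in place of nonnegative integers --- that is, recognising $\binom{f(m,n)}{a}$ as the image of $\binom{X+Y}{a}$ under a ring substitution --- after which the induction is routine bookkeeping. I would also note that nonnegativity of $f$ is not needed for the identity itself; it is listed among the hypotheses only because the lemma will be applied in situations where $f$ is a positive PBC.
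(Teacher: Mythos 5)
Your proof is correct and is essentially the paper's own argument: the paper simply says the lemma "is immediately shown by (\ref{binomial m+n})", and your write-up supplies exactly the implicit details, namely that (\ref{binomial m+n}) is a polynomial identity stable under substitution of polynomials for the indeterminates, followed by a routine induction on $r$. Your closing remark that positivity of $f$ is not needed for the identity itself is also accurate.
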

\begin{proof}
It is immediately shown by (\ref{binomial m+n}).
\end{proof}
Every factor of the above decomposition is also a positive PBC.
\begin{lem}\label{astu}
Let $a,s,t,u \in \mathbb{Z}_{\geq 0}$. Then, a polynomial
\begin{equation}
f(m,n)=\binom{u\binom{m}{s}\binom{n}{t}}{a}
\end{equation}
is a nonnegative PCB in $m$ and $n$.
\end{lem}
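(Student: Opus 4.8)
The plan is to prove the stated polynomial identity combinatorially: I will compute the expansion of $f(m,n)=\binom{u\binom{m}{s}\binom{n}{t}}{a}$ in the basis $\left\{\binom{m}{i}\binom{n}{j}\right\}$ of $\mathbb{Q}[m,n]$ and identify each coefficient as the cardinality of an explicit finite set, hence as a nonnegative integer.

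First, note $f(m,n)$ is a polynomial in $m,n$ (a composition of polynomials), and that for every $(m,n)\in\mathbb{Z}_{\geq0}^{2}$ the value $f(m,n)$ equals the number of $a$-element subsets of
\[
X_{m,n}:=[u]\times\binom{[m]}{s}\times\binom{[n]}{t},
\]
where $[p]=\{1,\dots,p\}$ and $\binom{[p]}{q}$ denotes the family of $q$-element subsets of $[p]$; indeed $|X_{m,n}|=u\binom{m}{s}\binom{n}{t}$ with the binomial-coefficient convention of Definition~\ref{dfn: PBC}, and $\binom{N}{a}$ counts the $a$-subsets of an $N$-element set.

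Next I partition the $a$-subsets of $X_{m,n}$ by their \emph{support}: for an $a$-subset $S\subseteq X_{m,n}$ put $P^{*}(S)=\bigcup_{(c,P,Q)\in S}P\subseteq[m]$ and $Q^{*}(S)=\bigcup_{(c,P,Q)\in S}Q\subseteq[n]$. Every such $S$ lies in $[u]\times\binom{P^{*}(S)}{s}\times\binom{Q^{*}(S)}{t}$ and determines the pair $(P^{*}(S),Q^{*}(S))$ uniquely, so
\[
f(m,n)=\sum_{P^{*}\subseteq[m]}\ \sum_{Q^{*}\subseteq[n]} N(P^{*},Q^{*}),
\]
where $N(P^{*},Q^{*})$ is the number of $a$-element subsets $S$ of $[u]\times\binom{P^{*}}{s}\times\binom{Q^{*}}{t}$ with $P^{*}(S)=P^{*}$ and $Q^{*}(S)=Q^{*}$. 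The key point is that $N(P^{*},Q^{*})$ depends only on $i:=|P^{*}|$ and $j:=|Q^{*}|$: given a second pair $(P^{**},Q^{**})$ with $|P^{**}|=i$ and $|Q^{**}|=j$, any bijections $P^{*}\to P^{**}$, $Q^{*}\to Q^{**}$ induce a bijection $[u]\times\binom{P^{*}}{s}\times\binom{Q^{*}}{t}\to[u]\times\binom{P^{**}}{s}\times\binom{Q^{**}}{t}$ carrying full-support $a$-subsets to full-support $a$-subsets. Write $\beta(i,j):=N(P^{*},Q^{*})\in\mathbb{Z}_{\geq0}$ whenever $|P^{*}|=i$, $|Q^{*}|=j$; then $\beta(i,j)=0$ unless $a\geq1$, $s\leq i\leq as$, $t\leq j\leq at$ (or $a=0$ and $i=j=0$), so $\beta$ has finite support. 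Counting the $\binom{m}{i}$ choices of $P^{*}$ of size $i$ and the $\binom{n}{j}$ choices of $Q^{*}$ of size $j$ yields
\[
f(m,n)=\sum_{i,j\geq0}\beta(i,j)\binom{m}{i}\binom{n}{j}\qquad\bigl((m,n)\in\mathbb{Z}_{\geq0}^{2}\bigr).
\]
The right-hand side is a polynomial, so, $\mathbb{Z}_{\geq0}^{2}$ being Zariski dense, this identity holds in $\mathbb{Q}[m,n]$; since all $\beta(i,j)$ are nonnegative integers, $f$ is a nonnegative PBC.

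The step demanding genuine care is the invariance of $N(P^{*},Q^{*})$ under relabeling — spelling the bijection out precisely — together with the (immediate) degenerate cases $s=0$, $t=0$, $u=0$, $a=0$; e.g.\ if $s=0$ then $\binom{[m]}{0}=\{\emptyset\}$ and $P^{*}(S)=\emptyset$ always, so only $i=0$ contributes. I should add that a purely algebraic induction on $a$ via $(\ref{c})$ and Lemma~\ref{binomial lemma} does not obviously close, because the recursion $\binom{X}{a+1}=\frac{1}{a+1}(X-a)\binom{X}{a}$ involves a subtraction that destroys manifest nonnegativity, which is why I favour the combinatorial argument above. Finally, combined with Lemma~\ref{PBC lem-1} and Corollary~\ref{preserve the product}, this lemma yields at once the analogous statement about $\binom{f(m,n)}{a}$ for an arbitrary positive PBC $f$.
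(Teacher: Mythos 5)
Your proof is correct, and it takes a genuinely different route from the paper's. The paper argues algebraically: it first invokes Lemma~\ref{nnPBClem} (integrality of all values on $\mathbb{Z}_{\geq 0}^2$) to conclude that $f$ is a PBC, and then runs an induction on $a$ via the identity $(a+1)\binom{X}{a+1}=(X-a)\binom{X}{a}$ together with the product expansion (\ref{binomial lemma 2-prod}). You are right that the subtraction by $a$ threatens manifest nonnegativity, but the paper does close this induction by strengthening the inductive hypothesis to a dichotomy: $\alpha^a_{k,l}=0$ when $u\binom{k}{s}\binom{l}{t}<a$ and $\alpha^a_{k,l}>0$ when $u\binom{k}{s}\binom{l}{t}\geq a$; the problematic term then appears as $\alpha^a_{k,l}\bigl\{u\binom{k}{s}\binom{l}{t}-a\bigr\}$, which is nonnegative under that hypothesis. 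Your combinatorial argument --- realizing $f(m,n)$ as the number of $a$-subsets of $[u]\times\binom{[m]}{s}\times\binom{[n]}{t}$, partitioning by support, and using relabeling invariance to identify the coefficient of $\binom{m}{i}\binom{n}{j}$ as the count of full-support $a$-subsets --- is shorter, avoids the delicate bookkeeping, and gives the coefficients an explicit combinatorial meaning; the passage from an identity on $\mathbb{Z}_{\geq 0}^2$ to an identity of polynomials is standard and correctly justified. What the paper's proof buys in exchange is the sharper positivity criterion $\alpha^a_{k,l}>0 \Leftrightarrow u\binom{k}{s}\binom{l}{t}\geq a$ (your $\beta(i,j)$ could in principle be analyzed to recover this, but you do not need to, since the lemma only asserts nonnegativity). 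Both proofs establish the statement as used downstream in Proposition~\ref{composition preserve filling}.
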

\begin{proof}
If $u=0$, the claim is immediately shown by definition. Assume $u > 0$ . For any $p,q \in \mathbb{Z}$, we can easily check that $f(p,q) \in \mathbb{Z}$. Thus, by Lemma~\ref{nnPBClem}, $f(m,n)$ is a PBC. Now, we fix $u > 0$ and $s,t \geq 0$. Since its degrees in $m$ and $n$ are $sa$ and $ta$ respectively, we may express
\begin{equation}
\binom{u\binom{m}{s}\binom{n}{t}}{a} = \sum_{\begin{smallmatrix}0 \leq k \leq sa,\\ 0 \leq l \leq ta\end{smallmatrix}} \alpha_{k,l}^a \binom{m}{k}\binom{n}{l} \quad (\alpha_{k,l}^a \in \mathbb{Z}).
\end{equation}
We show the following two claims.
\begin{itemize}
\item[(a)] If $u\binom{k}{s}\binom{l}{t}<a$, then $\alpha^{a}_{k,l}=0$ holds.
\item[(b)] If $u\binom{k}{s}\binom{l}{t} \geq a$, then $\alpha^{a}_{k,l}>0$ holds.
\end{itemize}
(a) In this case, for any $k',l' \in \mathbb{Z}_{\geq 0}$ such that $k' \leq k$ and $l' \leq l$, we have $u\binom{k'}{s}\binom{l'}{t} \leq u\binom{k}{s}\binom{l}{t} < a$. It implies 
\begin{equation}
\binom{u\binom{k'}{s}\binom{l'}{t}}{a}=0.
\end{equation}
Thus, we have
\begin{equation}
0=\sum_{\substack{0 \leq i \leq sa,\\ 0 \leq j \leq ta}}\alpha_{i,j}^{a} \binom{k'}{i}\binom{l'}{j}=\sum_{\substack{0 \leq i \leq k',\\ 0 \leq j \leq l'}}\alpha_{i,j}^{a} \binom{k'}{i}\binom{l'}{j} \quad \left(\binom{k'}{i}=0\ \textup{if}\ k' < i.\right)
\end{equation}
for any $k' \leq k$ and $l' \leq l$. Considering $(k',l')=(0,0)$, we have $\alpha_{0,0}^a=0$. Next, considering $(k',l')=(1,0)$, we have $\alpha_{0,0}^a+\alpha_{1,0}^a=0$, and it implies $\alpha_{1,0}^a=0$. Repeating this process, we have $\alpha_{k,l}^a=0$.\\
(b) We show the claim by the induction on $a$. For $a=0$, we have 
\begin{equation}
\dbinom{u\binom{m}{s}\binom{n}{t}}{0}=1.
\end{equation}
Thus, $\alpha^0_{0,0}=1>0$ holds. Suppose that the claim holds for some $a \geq 0$. We show that $\alpha_{k,l}^{a+1} > 0$ when $u\binom{k}{s}\binom{l}{t} \geq a+1$. We have the following equalities:
\begin{equation}\label{a+1-a}
\begin{aligned}
\ &(a+1)\binom{u\binom{m}{s}\binom{n}{t}}{a+1}=\left({u\binom{m}{s}\binom{n}{t}-a}\right)\binom{u\binom{m}{s}\binom{n}{t}}{a}\\
=\ &\left(u\binom{m}{s}\binom{n}{t}-a\right)\sum_{\begin{smallmatrix}0 \leq k \leq sa,\\ 0 \leq l \leq ta\end{smallmatrix}} \alpha_{k,l}^a \binom{m}{k}\binom{n}{l}\\
=\ &u\sum_{\begin{smallmatrix}0 \leq k \leq sa,\\ 0 \leq l \leq ta\end{smallmatrix}} \alpha_{k,l}^a \binom{m}{s}\binom{m}{k}\binom{n}{t}\binom{n}{l}-a\sum_{\begin{smallmatrix}0 \leq k \leq sa,\\ 0 \leq l \leq ta\end{smallmatrix}} \alpha_{k,l}^a \binom{m}{k}\binom{n}{l}.
\end{aligned}
\end{equation}
By (\ref{binomial lemma 2-prod}), the first term can be written as
\begin{equation}\label{eq: astu-3}
\begin{aligned}
&\sum_{\begin{smallmatrix}0 \leq k \leq sa,\\ 0 \leq l \leq ta\end{smallmatrix}} \alpha_{k,l}^a \left\{\binom{m}{s}\binom{m}{k}\right\}\left\{\binom{n}{t}\binom{n}{l}\right\}\\
\overset{(\ref{binomial lemma 2-prod})}{=}&\sum_{\begin{smallmatrix}0 \leq k \leq sa,\\ 0 \leq l \leq ta\end{smallmatrix}} \alpha_{k,l}^a\left\{\sum_{i=0}^{s} \binom{s}{i} \binom{k+i}{s} \binom{m}{k+i}\right\} \left\{\sum_{j=0}^{t} \binom{t}{j} \binom{l+j}{t} \binom{n}{l+j}\right\}\\
=&\ \sum_{\substack{0 \leq i \leq s,\\ 0 \leq j \leq t}}\sum_{\begin{smallmatrix}0 \leq k \leq sa,\\ 0 \leq l \leq ta\end{smallmatrix}} \alpha_{k,l}^a\binom{s}{i} \binom{k+i}{s} \binom{t}{j} \binom{l+j}{t} \binom{m}{k+i}\binom{n}{l+j}\\
=&\ \sum_{\substack{0 \leq i \leq s,\\ 0 \leq j \leq t}}\sum_{\substack{ i \leq k \leq sa+i, \\  j \leq l \leq ta+j \\}}\alpha_{k-i,l-j}^a \binom{s}{i} \binom{k}{s}  \binom{t}{j} \binom{l}{t} \binom{m}{k}\binom{n}{l}\\
=&\ \sum_{\substack{0 \leq i \leq s,\\ 0 \leq j \leq t}}\sum_{\substack{ 0 \leq k \leq sa+i, \\  0 \leq l \leq ta+j \\}}\alpha_{k-i,l-j}^a \binom{s}{i} \binom{k}{s}  \binom{t}{j} \binom{l}{t} \binom{m}{k}\binom{n}{l}.
\end{aligned}
\end{equation}
In the above last equality, we use $\binom{k}{s}=\binom{l}{t}=0$ for any $k < i \leq s$ and $l < j \leq t$. We decompose the region of the latter sum as follows:
\begin{equation}
\sum_{\substack{0 \leq k \leq sa+i,\\ 0 \leq l \leq ta+j}}=\sum_{\substack{0 \leq k \leq sa,\\ 0 \leq l \leq ta}}+\sum_{\substack{sa<k\leq sa+i,\\ \textup{or}\ ta < l \leq ta+j}}.
\end{equation}
Namely, we consider
\begin{equation}
\begin{aligned}
&\ \sum_{\begin{smallmatrix}0 \leq k \leq sa,\\ 0 \leq l \leq ta\end{smallmatrix}} \alpha_{k,l}^a \left\{\binom{m}{s}\binom{m}{k}\right\}\left\{\binom{n}{t}\binom{n}{l}\right\}\\
\overset{(\ref{eq: astu-3})}{=}&\ \sum_{\substack{0 \leq i \leq s,\\ 0 \leq j \leq t}}\sum_{\substack{ 0 \leq k \leq sa, \\  0 \leq l \leq ta\\}}\alpha_{k-i,l-j}^a \binom{s}{i} \binom{k}{s}  \binom{t}{j} \binom{l}{t} \binom{m}{k}\binom{n}{l}\\
&\qquad +\sum_{\substack{0 \leq i \leq s,\\ 0 \leq j \leq t}}\sum_{\substack{sa<k\leq sa+i,\\ \textup{or}\ ta<l\leq ta+j\\}}\alpha_{k-i,l-j}^a \binom{s}{i} \binom{k}{s}  \binom{t}{j} \binom{l}{t} \binom{m}{k}\binom{n}{l}.\\
\end{aligned}
\end{equation}
Then, in the first term, $i,j,k$, and $l$ are indeprndent. Thus, we can exchange the order of the sum. In the second term, we have
\begin{equation}
\sum_{\substack{0 \leq i \leq s,\\ 0 \leq j \leq t}}\sum_{\substack{sa<k\leq sa+i,\\ \textup{or}\ ta<l\leq ta+j\\}}=\sum_{\substack{sa<k\leq s(a+1),\\ \textup{or}\ ta<l\leq t(a+1)}}\sum_{\substack{k-sa\leq i \leq s,\\ l-ta\leq j \leq t}}.
\end{equation}
Thus, we have
\begin{equation}
\begin{aligned}
&\ \sum_{\begin{smallmatrix}0 \leq k \leq sa,\\ 0 \leq l \leq ta\end{smallmatrix}} \alpha_{k,l}^a \left\{\binom{m}{s}\binom{m}{k}\right\}\left\{\binom{n}{t}\binom{n}{l}\right\}\\
=&\ \sum_{\substack{ 0 \leq k \leq sa, \\  0 \leq l \leq ta\\}}\biggl\{\sum_{\substack{0 \leq i \leq s,\\ 0 \leq j \leq t}}\alpha_{k-i,l-j}^a \binom{s}{i} \binom{k}{s}  \binom{t}{j} \binom{l}{t}\biggr\}\binom{m}{k}\binom{n}{l}\\
&\qquad +\sum_{\substack{sa<k\leq s(a+1),\\ \textup{or}\ ta<l\leq t(a+1)\\}}\biggl\{\sum_{\substack{k-sa \leq i \leq s,\\ l-ta \leq j \leq t}}\alpha_{k-i,l-j}^a \binom{s}{i} \binom{k}{s}  \binom{t}{j}\binom{l}{t}\biggr\} \binom{m}{k}\binom{n}{l}.\\
\end{aligned}
\end{equation}
Putting the last expression to the last line of (\ref{a+1-a}), we have
\begin{equation}\label{2.6-a}
\begin{aligned}
&\ (a+1)\dbinom{u\binom{m}{s}\binom{n}{t}}{a+1}\\
=&\ \sum_{\substack{ 0 \leq k \leq sa, \\  0 \leq l \leq ta\\}}\biggl\{u\sum_{\substack{0 \leq i \leq s,\\ 0 \leq j \leq t}}\alpha_{k-i,l-j}^a \binom{s}{i} \binom{k}{s}  \binom{t}{j} \binom{l}{t}-a\alpha_{k,l}^a\biggr\}\binom{m}{k}\binom{n}{l}\\
&\qquad +\sum_{\substack{sa<k\leq s(a+1),\\ \textup{or}\ ta<l\leq t(a+1)\\}}\biggl\{u\sum_{\substack{k-sa \leq i \leq s,\\ l-ta \leq j \leq t}}\alpha_{k-i,l-j}^a \binom{s}{i} \binom{k}{s}  \binom{t}{j}\binom{l}{t}\biggr\} \binom{m}{k}\binom{n}{l}.\\
\end{aligned}
\end{equation}
If $k > sa$ or $l > ta$, then we have
\begin{equation}
\begin{aligned}
(a+1)\alpha_{k,l}^{a+1}=u\sum_{\substack{k-sa \leq i \leq s,\\ l-ta \leq j \leq t}}\alpha_{k-i,l-j}^{a}\binom{s}{i}\binom{k}{s}\binom{t}{j}\binom{l}{t} > 0.
\end{aligned}
\end{equation}
This is because, for $i=k-sa$ and $j=l-ta$, $\alpha^{a}_{k-i,l-j}=\alpha^{a}_{sa,ta}>0$.
If $k \leq sa$ and $l \leq ta$, then we have
\begin{equation}
\begin{aligned}
&(a+1)\alpha_{k,l}^{a+1}=u\sum_{\substack{0 \leq i \leq s,\\ 0 \leq j \leq t}}\alpha_{k-i,l-j}^{a}\binom{s}{i}\binom{k}{s}\binom{t}{j}\binom{l}{t}-a\alpha_{k,l}^a\\
=\ &\alpha_{k,l}^a\biggl\{u\binom{k}{s}\binom{l}{t}-a\biggr\}+u\sum_{\substack{0 \leq i \leq s,\\ 0 \leq j \leq t,\\ (i,j) \neq (0,0)}}\alpha_{k-i,l-j}^{a}\binom{s}{i}\binom{k}{s}\binom{t}{j}\binom{l}{t}.\\
\end{aligned}
\end{equation}
is positive since $u\binom{k}{s}\binom{l}{t}-a \geq 1$ and $\alpha_{k,l}^a > 0$. Hence, $\alpha_{k,l}^{a+1} > 0$ when $u\binom{k}{s}\binom{l}{t} \geq a+1$. This completes the proof.
\end{proof}

We have the main conclusion in this section.
\begin{prop}\label{composition preserve filling}
If $f,g,$ and $h$ are positive PBCs, then $f(g(m,n),h(m,n))$ is also a positive PBC.
\end{prop}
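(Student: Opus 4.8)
The plan is to expand $f$ in the binomial basis, reduce the claim to an analysis of the building blocks $\binom{g(m,n)}{k}$, and then assemble the answer from the nonnegativity-preservation results already established. Write
\[
f(m,n)=\sum_{0\leq k,l}\alpha_{k,l}\binom{m}{k}\binom{n}{l},\qquad \alpha_{k,l}\in\mathbb{Z}_{\geq0},\ \text{not all }0.
\]
Substituting $m\mapsto g(m,n)$ and $n\mapsto h(m,n)$ gives
\[
f(g(m,n),h(m,n))=\sum_{0\leq k,l}\alpha_{k,l}\binom{g(m,n)}{k}\binom{h(m,n)}{l},
\]
so it suffices to prove (i) each $\binom{g(m,n)}{k}$ is a nonnegative PBC (and likewise for $h$), and (ii) the resulting sum is nonzero as a polynomial. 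Note that (i) already yields integrality of the binomial coefficients of $f(g,h)$, since products and sums of PBCs are PBCs (alternatively one could invoke Lemma~\ref{nnPBClem}).

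For (i) I would write the positive PBC $g$ as $g(m,n)=\sum_{j=1}^{r}u_{j}\binom{m}{k_j}\binom{n}{l_j}$ with $u_j,k_j,l_j\in\mathbb{Z}_{\geq0}$, and apply Lemma~\ref{PBC lem-1} to obtain
\[
\binom{g(m,n)}{k}=\sum_{\substack{0\leq a_j,\\ \sum_j a_j=k}}\ \prod_{j=1}^{r}\binom{u_{j}\binom{m}{k_j}\binom{n}{l_j}}{a_j}.
\]
By Lemma~\ref{astu} each factor $\binom{u_{j}\binom{m}{k_j}\binom{n}{l_j}}{a_j}$ is a nonnegative PBC. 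The next point is that the proof of Corollary~\ref{preserve the product} --- which reduces, via Lemma~\ref{binomial lemma}, to the fact that each product $\left\{\binom{m}{s}\binom{n}{s'}\right\}\left\{\binom{m}{r}\binom{n}{r'}\right\}$ re-expands with \emph{nonnegative} binomial coefficients --- goes through verbatim for nonnegative (rather than positive) PBCs: a product of finitely many nonnegative PBCs is a nonnegative PBC, and a finite sum of nonnegative PBCs is obviously one too. Hence $\binom{g(m,n)}{k}$ is a nonnegative PBC for every $k$, and symmetrically so is $\binom{h(m,n)}{l}$. Consequently each summand $\alpha_{k,l}\binom{g(m,n)}{k}\binom{h(m,n)}{l}$ of $f(g,h)$ is a nonnegative PBC, and therefore so is $f(g(m,n),h(m,n))$.

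For (ii), since all binomial coefficients of $f(g,h)$ are nonnegative, no cancellation can occur, so it is enough to exhibit a single point where the value is strictly positive. Because $g$ and $h$ are positive PBCs, their values on $\mathbb{Z}_{\geq0}^{2}$ are nonnegative integers; evaluating at $m=n=N$ and letting $N\to\infty$ makes both $g(N,N)$ and $h(N,N)$ arbitrarily large (here one uses that the $g$, $h$ at issue are non-constant, which holds throughout the paper since they vanish whenever $m=0$ or $n=0$), and a nonzero polynomial with nonnegative binomial coefficients is strictly positive at any pair of sufficiently large nonnegative integers. Thus $f(g(N,N),h(N,N))>0$ for $N\gg0$, so $f(g(m,n),h(m,n))$ is not the zero polynomial, and with (i) this shows it is a positive PBC.

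I expect the main obstacle to be step (i): verifying that nonnegativity of binomial coefficients survives the \emph{outer} substitution $t\mapsto\binom{t}{k}$. This is exactly what Lemmas~\ref{PBC lem-1} and~\ref{astu} are built to handle, together with the (routine but necessary) remark that Corollary~\ref{preserve the product} extends from positive to nonnegative PBCs. Step (ii) is then a short non-vanishing argument and should cause no real difficulty.
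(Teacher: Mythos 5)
Your proposal is correct and follows essentially the same route as the paper: expand $f$ in the binomial basis, reduce to showing $\binom{g(m,n)}{k}$ is a (non)negative PBC, and conclude via Lemma~\ref{PBC lem-1}, Lemma~\ref{astu}, and Corollary~\ref{preserve the product}. You are in fact somewhat more careful than the paper, both in noting that Corollary~\ref{preserve the product} must be read for nonnegative (not just positive) PBCs since Lemma~\ref{astu} only yields nonnegativity, and in supplying the non-vanishing argument for positivity, which the paper leaves implicit.
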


\begin{proof}
Since the sum of positive PBCs is also a positive PBC, it suffices to show the case of $f(m,n)=\binom{m}{a}\binom{n}{b}$ for $a,b \in \mathbb{Z}_{\geq 0}$. In this case, we have $f(g(m,n),h(m,n))=\binom{g(m,n)}{a}\binom{h(m,n)}{b}$. Moreover, by Corollary~\ref{preserve the product}, it suffices to show that $\binom{g(m,n)}{a}$ is a positive PBC. It is immediately shown by Lemma~\ref{PBC lem-1}, Lemma~\ref{astu}, and Corollary~\ref{preserve the product}. 
\end{proof}

\section{Diligarithm elements and cluster scattering diagrams of rank 2}\label{Sec: DEs and CSDs of rank 2}
In this section, we summarize the definitions and properties of cluster scattering diagrams which were introduced by \cite{GHKK18}. We concentrate on them of rank 2, and most notations mainly follow from \cite{Nak23}.
\subsection{Dilogarithm elements and ordered products}\label{Dilogarithm elements and ordering products}
\begin{dfn}[Fixed data and seed]
We define a {\em fixed data} $\Gamma=(N,N^{\circ},\{,\},\delta_1,\delta_2)$ and a {\em seed} $\mathfrak{s}=(e_1,e_2)$ as follows:
\begin{itemize}
\item A lattice $N \cong \mathbb{Z}^2$ with a skew-symmetric bilinear form $\{,\}: N \times N  \to \mathbb{Q}$.
\item Positive integers $\delta_1,\delta_2 \in \mathbb{Z}_{>0}$, and a basis $(e_1,e_2)$ of $N$. They satisfy $\{\delta_ie_i,e_j\} \in \mathbb{Z}$ for $i,j=1,2$.
\item A sublattice $N^{\circ} = \mathbb{Z}(\delta_1e_1)\oplus\mathbb{Z}(\delta_2e_2) \subset N$.
\end{itemize}
\end{dfn}
For given fixed data $\Gamma$ and seed $\mathfrak{s}$ as above, we have dual lattices $M=\mathrm{Hom}_{\mathbb{Z}}(N,\mathbb{Z})$ and $M^{\circ}=\mathrm{Hom}_{\mathbb{Z}}(N^{\circ},\mathbb{Z})$, and we define a real vector space $M_{\mathbb{R}}=M\otimes_{\mathbb{Z}}\mathbb{R}$. We regard
\begin{equation}
M \subset M^{\circ} \subset M_{\mathbb{R}}.
\end{equation}
Also, we have the dual basis $(e^*_1,e^*_2)$ of $M$. Let $f_i=e^*_i/\delta_i$. Then, $(f_1,f_2)$ is a basis of $M^\circ$. We define the canonical pairing
\begin{equation}
\begin{aligned}
\langle\ ,\ \rangle: M_{\mathbb{R}} \times N &\to \mathbb{R},\\
\left\langle \sum_{i=1}^2 \alpha_i f_i, \sum_{j=1}^2 \beta_j e_j \right\rangle &= \sum_{i=1}^2 \delta_{i}^{-1}\alpha_i\beta_i.
\end{aligned}
\end{equation}
Let
\begin{equation}
N^{+} = \left\{ \sum_{i=1}^2 a_ie_i\ \middle|\ a_i \in \mathbb{Z}_{\geq 0}, \sum_{i=1}^2 a_i > 0 \right\}.
\end{equation}
We define the degree function $\deg : N^{+} \to \mathbb{Z}_{> 0}$ as
\begin{equation}
\deg\left(\sum_{i=1}^2 a_ie_i\right) = \sum_{i=1}^2 a_i.
\end{equation}
For any integer $l > 0$, we define the following sets.
\begin{equation}
\begin{aligned}
(N^{+})^{\leq l} &= \{n \in N^{+} \mid \deg(n) \leq l\} , \quad (N^{+})^{> l} = \{n \in N^{+} \mid \deg(n) > l\},\\
N^{+}_{\mathrm{pr}} &= \{ n \in N^{+} \mid \mathrm{For\ any\ } j \in \mathbb{Z}_{>1}, n/j \notin N^{+}\}.
\end{aligned}
\end{equation}
\begin{dfn}[Normalization factor]\label{dfn: normalization factor}
Let $\mathfrak{s}$ be a seed for a fixed data $\Gamma$. For any $n \in N^+$, we define $\delta(n)$ as the smallest positive rational number such that $\delta(n)n \in N^\circ$, and we call it the {\em normalization factor} of $n$ with respect to $(\Gamma,\mathfrak{s})$.
\end{dfn}
\begin{dfn}[Structure group]
Let $\mathfrak{g}$ be an $N^{+}\textup{-graded Lie algebra}$ over $\mathbb{Q}$ with generators $X_n\:(n \in N^{+})$ as follows:
\begin{eqnarray}
\mathfrak{g}=\bigoplus_{n \in N^{+}} \mathfrak{g}_n, \quad \mathfrak{g}_n=\mathbb{Q}X_n,\\
\;[X_n,X_{n'}]=\left\{n,n'\right\}X_{n+n'}\label{eq: Lie bracket}.
\end{eqnarray}
For each integer $l \in \mathbb{Z}_{>0}$, we define an ideal $\mathfrak{g}^{>l}$ of $\mathfrak{g}$ as
\begin{equation}
\mathfrak{g}^{>l}= \bigoplus_{n \in (N^{+})^{> l}}\mathfrak{g}_n,
\end{equation}
and we define the quotient
\begin{equation}
\mathfrak{g}^{\leq l}=\mathfrak{g}/\mathfrak{g}^{>l}.
\end{equation}
We define the group
\begin{equation}
G^{\leq l}=\bigl\{\exp(X) \mid X \in \mathfrak{g}^{\leq l} \bigr\}
\end{equation}
whose product is given by the {\em Baker-Campbell-Hausdorff formula} (e.g., \cite[\S V.5]{Jac79}).
\begin{equation}\label{eq: the BCH formula}
\begin{aligned}
&\ \exp(X)\exp(Y)\\
=&\ \exp\left(X+Y+\frac{1}{2}[X,Y]+\frac{1}{12}[X,[X,Y]]-\frac{1}{12}[Y,[X,Y]]+\cdots\right).
\end{aligned}
\end{equation}
Since the canonical projection $\pi_{l',l}: \mathfrak{g}^{\leq l'} \to \mathfrak{g}^{\leq l}\ (l'>l)$ induces the canonical projection $\pi_{l',l}: G^{\leq l'} \to G^{\leq l}$, we can consider the inverse limit of $\{\pi_{l+1,l}\}$, and we obtain a group
\begin{equation}
G=\lim_{\longleftarrow} G^{\leq l}
\end{equation}
with the canonical projection $\pi_l: G \to G^{\leq l}$. This group $G$ is called the {\em structure group} corresponding to $(\Gamma, \mathfrak{s})$. We define $G^{> l}=\ker \pi_{l}$.\par
\end{dfn}
\begin{dfn}
For any $g,g' \in G$ and $l \in \mathbb{Z}_{\geq 1}$, we write $g \equiv g' \mod G^{> l}$ when $\pi_{l}(g)=\pi_{l}(g')$, and we say $g$ is equal to $g'$ in $G^{\leq l}$.
\end{dfn}
By definition, for any $g,g' \in G$, $g=g$ is equivalent to $g \equiv g' \mod G^{>l}$ for any $l \in \mathbb{Z}_{\geq 1}$. 
\par
For any $g=\exp(X) \in G$ and $c \in \mathbb{Q}$, we define $g^{c}=\exp(cX)$. Then, $g^{0}=\mathrm{id}$ and $g^{c}g^{c'}=g^{c+c'}$ hold.\par
Every structure group $G$ is determined by a fixed data $\Gamma$ and a seed $\mathfrak{s}$. However, there is some redundancy.
\begin{dfn}[Exchange matrix]
For any fixed data $\Gamma$ and seed $\mathfrak{s}$, define a {\em exchange matrix} $B_{\Gamma,\mathfrak{s}}$ associated to $\Gamma$ and $\mathfrak{s}$ by
\begin{equation}
B_{\Gamma,\mathfrak{s}}=\begin{pmatrix}
0 & \{\delta_1e_1,e_2\}\\
\{\delta_2e_2,e_1\} & 0\\
\end{pmatrix}.
\end{equation}
\end{dfn}
\begin{prop}[{\cite[Prop.~1.23]{Nak23}}]\label{prop: B-equivalence}
Let $\Gamma$ and $\Gamma'$ be fixed data, and let $\mathfrak{s}$ and $\mathfrak{s}'$ be seeds for $\Gamma$ and $\Gamma'$, respectively. Let $G$ and $G'$ be structure groups corresponding to $(\Gamma,\mathfrak{s})$ and $(\Gamma',\mathfrak{s}')$, respectively. If $B_{\Gamma,\mathfrak{s}}=B_{\Gamma',\mathfrak{s}'}$, then $G$ and $G'$ are isomorphic.
\end{prop}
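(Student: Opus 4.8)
The plan is to reduce the statement about the structure groups to a statement about the underlying $N^{+}$-graded Lie algebras, and there to write down an explicit isomorphism obtained by rescaling generators. First I would identify the two index monoids: the map $a e_1 + b e_2 \mapsto a e_1' + b e_2'$ is an isomorphism of the additive monoids $N^{+}$ and $(N')^{+}$ preserving $\deg$, so we may regard $\mathfrak{g}$ and $\mathfrak{g}'$ as graded by a single common monoid. By bilinearity and skew-symmetry, $\{a e_1 + b e_2,\, a' e_1 + b' e_2\} = (ab' - a'b)\{e_1,e_2\}$, so the bracket of $\mathfrak{g}$ is $[X_n, X_{n'}] = (ab'-a'b)\,q\,X_{n+n'}$ with $q := \{e_1,e_2\} \in \mathbb{Q}$, and likewise $\mathfrak{g}'$ is governed by $q' := \{e_1',e_2'\}$.

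Next I would split on whether $q = 0$. If $q = 0$, then $B_{\Gamma,\mathfrak{s}} = 0$, so by hypothesis $B_{\Gamma',\mathfrak{s}'} = 0$, and since $\delta_1',\delta_2' > 0$ this forces $q' = 0$; both $\mathfrak{g}$ and $\mathfrak{g}'$ are then abelian Lie algebras on the same graded vector space, hence isomorphic. If $q \neq 0$, the $(1,2)$-entries of $B_{\Gamma,\mathfrak{s}} = B_{\Gamma',\mathfrak{s}'}$ give $\delta_1 q = \delta_1' q'$, whence $q' \neq 0$ too; then I would define $\psi \colon \mathfrak{g} \to \mathfrak{g}'$ by $\psi(X_n) = (q/q')\,X_n$ on generators and extend $\mathbb{Q}$-linearly. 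Since $q/q' \neq 0$, $\psi$ is a degree-preserving linear isomorphism, and checking $\psi([X_n,X_{n'}]) = [\psi(X_n),\psi(X_{n'})]$ on basis elements reduces, when $ab'-a'b \neq 0$, to the identity $q\cdot(q/q') = q'\cdot(q/q')^2$ (both sides vanish when $ab'-a'b = 0$). Thus $\psi$ is an isomorphism of $N^{+}$-graded Lie algebras; incidentally this shows that $G$ sees $B_{\Gamma,\mathfrak{s}}$ only through whether it is zero.

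Finally I would transport this to the groups. As $\psi$ is graded, $\psi(\mathfrak{g}^{>l}) = (\mathfrak{g}')^{>l}$ for every $l$, so $\psi$ induces Lie algebra isomorphisms $\mathfrak{g}^{\leq l} \to (\mathfrak{g}')^{\leq l}$ commuting with the projections $\pi_{l+1,l}$. Since the product on each $G^{\leq l}$ is given by the Baker--Campbell--Hausdorff formula, which is a universal expression in iterated brackets, the map $\exp(X) \mapsto \exp(\psi(X))$ is a group isomorphism $G^{\leq l} \to (G')^{\leq l}$, again compatible with the $\pi_{l+1,l}$; passing to the inverse limit over $l$ yields $G \cong G'$. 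The only place where the hypothesis $B_{\Gamma,\mathfrak{s}} = B_{\Gamma',\mathfrak{s}'}$ is genuinely used is the case split on $q$ together with the bracket check for $\psi$, so that short computation is the heart of the argument; everything else is formal bookkeeping with graded nilpotent Lie algebras and their exponentials.
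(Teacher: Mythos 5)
Your argument is correct, and it is worth noting that the paper itself offers no proof of this proposition: it is quoted verbatim from \cite[Prop.~1.23]{Nak23}, so there is no in-text argument to compare against. Your reduction to the Lie algebra level is the natural one. The key computation checks out: writing $q=\{e_1,e_2\}$ and $q'=\{e_1',e_2'\}$, the equality of the $(1,2)$-entries gives $\delta_1 q=\delta_1' q'$ with $\delta_1,\delta_1'>0$, so $q$ and $q'$ vanish simultaneously, and in the nondegenerate case the rescaling $\psi(X_n)=(q/q')X_n$ satisfies $\psi([X_n,X_{n'}])=[\psi(X_n),\psi(X_{n'})]$ precisely because $q(q/q')=q'(q/q')^2$. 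Since $\psi$ is degree-preserving it descends to each $\mathfrak{g}^{\leq l}$ compatibly with the projections, and BCH being a universal Lie series, $\exp(X)\mapsto\exp(\psi(X))$ gives compatible group isomorphisms $G^{\leq l}\to (G')^{\leq l}$ and hence an isomorphism of the inverse limits. Two small remarks. First, your observation that the abstract group $G$ only remembers whether $B_{\Gamma,\mathfrak{s}}$ vanishes is correct but shows your isomorphism is \emph{not} the identity on dilogarithm elements: one computes $\psi$ sends $\Psi[n]\mapsto\Psi'[n]^{q/q'}$, so if one wants to transport walls and consistency relations (as the paper does right after, in the rescaling (\ref{eq: rescaling})) one must track this exponent; the statement as quoted only asserts abstract isomorphism, so your proof is complete for it, but the cited result in \cite{Nak23} is used in a slightly stronger form. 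Second, in the degenerate case you should still say a word about why the two abelian Lie algebras are isomorphic \emph{as graded} algebras (the identity on generators under the identification $e_i\mapsto e_i'$ does it), since the group isomorphism must still commute with the projections $\pi_{l+1,l}$; this is immediate but is the one step you pass over silently.
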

If we focus on a structure group, it suffices to consider the case of $\{e_2,e_1\}=1$. For any fixed data $\Gamma'=(N,(N^{\circ})',\{,\}',\delta'_1,\delta'_2)$ and seed $\mathfrak{s}=(e_1,e_2)$ such that $\{e_2,e_1\}' > 0$ (if $\{e_2,e_1\}'<0$, interchange $e_1$ and $e_2$), we define another fixed data $\Gamma=(N,N^{\circ},\{,\},\delta_1,\delta_2)$ by
\begin{equation}\label{eq: rescaling}
\begin{aligned}
&\delta_1=-\{\delta'_1e_1,e_2\}',\quad\delta_2'=\{\delta'_2e_2,e_1\}',\quad\{e_2,e_1\}=1,\\
&N^{\circ}=\left\{a(\delta_1e_1)+b(\delta_2e_2)\; |\; a,b \in \mathbb{Z} \right\} \subset N.
\end{aligned}
\end{equation}
Then, $\mathfrak{s}$ is also a seed for $\Gamma$. Moreover, because of Proposition~\ref{prop: B-equivalence}, the structure group corresponding to $(\Gamma,\mathfrak{s})$ is isomorphic to the one corresponding to $(\Gamma',\mathfrak{s})$. From now on, we fix a seed $(e_1,e_2)$ satisfying $\{e_2,e_1\}=1$. Then, we may view $N = \mathbb{Z}^2$, $N^{+}=\mathbb{Z}_{\geq 0}^2 \backslash \{(0,0)\}$, and $M_{\mathbb{R}}=\mathbb{R}^2$ as follows:
\begin{equation}\label{eq: identities}
\begin{aligned}
&N \to \mathbb{Z}^{2}, \quad& ae_1+be_2 \mapsto (a,b),\\
&M_{\mathbb{R}} \to \mathbb{R}^{2}  & \alpha f_1 + \beta f_2 \mapsto (\alpha,\beta).
\end{aligned}
\end{equation}
In the above notations and our assumption, the skew-bilinear form $\{,\} : N \times N \to \mathbb{Q}$ may be viewed as
\begin{equation}\label{eq: skew bilinear form}
\{(a,b),(c,d)\}=bc-ad=-\left|\begin{matrix}
a & c\\
b & d\\
\end{matrix}\right|.
\end{equation}
Moreover, for any $(a,b) \in N^{+}$, the normalization factor $\delta(a,b)$ only depends on the data of $(\delta_1,\delta_2)$. So, we call it the normalization factor with respect to $(\delta_1,\delta_2)$. In particular, when $(\delta_1,\delta_2)=(1,1)$, we write it by $d(a,b)$. This is given by
\begin{equation}
d(a,b)=\frac{1}{\gcd(a,b)}.
\end{equation}
\begin{dfn}[Dilogarithm element]\label{def: dirogarithm elements}
For each $n \in N^{+}$, we define the {\em dilogarithm element} for $n$ as
\begin{equation}\label{eq: dilogarithm elements}
\Psi[n]=\exp\left(\sum_{j=1}^\infty \frac{(-1)^{j+1}}{j^2}X_{jn}\right) \in G.
\end{equation}
\end{dfn}
Let $n \in N^{+}$. Then, the lowest term of $\Psi[n]$ is $X_n$. Thus, for any positive integer $l < \deg(n)$, it holds that $\pi_{l}(\Psi[n])=\mathrm{id}$.
\par
In this paper, the following relations may be viewed as fundamental relations.
\begin{prop}[{\cite[Prop.~III.1.14]{Nak23}}]\label{pentagon relation}
Let $n,n' \in N^{+}$. Then, the following relations hold.\\
\textup{(a)}\:\:If $\{n',n\}=0$, then for any $\gamma,\gamma' \in \mathbb{Q}$, it holds that
\begin{equation}\label{exchange}
\Psi[n']^{\gamma'}\Psi[n]^{\gamma} = \Psi[n]^{\gamma}\Psi[n']^{\gamma'}.
\end{equation}
\textup{(b)}\:{\textup{(pentagon relation)}}\:If $\{n',n\}=\gamma^{-1} \in \mathbb{Q}\backslash\{0\}$, it holds that
\begin{equation}\label{eq-pentagon relation}
\Psi[n']^{\gamma}\Psi[n]^{\gamma} = \Psi[n]^{\gamma}\Psi[n+n']^{\gamma}\Psi[n']^{\gamma}.
\end{equation}
\end{prop}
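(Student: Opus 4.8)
Part (a) is immediate, and part (b) I would split into a reduction to one concrete case and an explicit verification of that case. For (a), set $A:=\log\Psi[n']^{\gamma'}=\gamma'\sum_{j\ge1}\frac{(-1)^{j+1}}{j^2}X_{jn'}$ and $B:=\log\Psi[n]^{\gamma}=\gamma\sum_{k\ge1}\frac{(-1)^{k+1}}{k^2}X_{kn}$. Every term of $[A,B]$ is a scalar multiple of $[X_{jn'},X_{kn}]=jk\{n',n\}X_{jn'+kn}=0$, so $[A,B]=0$, and therefore $\exp(A)\exp(B)=\exp(A+B)=\exp(B)\exp(A)$, which is (\ref{exchange}).

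For (b), write $c:=\{n',n\}=\gamma^{-1}\ne0$; then $n$ and $n'$ are linearly independent. Let $\mathfrak h\subseteq\mathfrak g$ be the set of all series in the generators $X_{jn+kn'}$ with $j,k\ge0$ and $j+k\ge1$; by bilinearity of $\{\,,\,\}$ one has $[X_{jn+kn'},X_{j'n+k'n'}]=c\,(j'k-jk')\,X_{(j+j')n+(k+k')n'}$, so $\mathfrak h$ is a closed Lie subalgebra and $G_{\mathfrak h}:=\exp(\mathfrak h)$ is a closed subgroup of $G$. The elements $\Psi[n]^{\gamma}$, $\Psi[n']^{\gamma}$, $\Psi[n+n']^{\gamma}$ all lie in $G_{\mathfrak h}$, so (\ref{eq-pentagon relation}) is an identity in $G_{\mathfrak h}$ whose validity depends on $(n,n')$ only through $c$. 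Moreover the rescaling $X_{jn+kn'}\mapsto c\,X_{jn+kn'}$ is a graded Lie algebra isomorphism of $\mathfrak h$ onto its $c=1$ analogue that carries $\Psi[n]^{1/c}$, $\Psi[n']^{1/c}$, $\Psi[n+n']^{1/c}$ to $\Psi[n]$, $\Psi[n']$, $\Psi[n+n']$. Hence (\ref{eq-pentagon relation}) reduces to its $c=1$ instance, and after relabeling the cone via a seed with $\{e_2,e_1\}=1$ it suffices to prove
\[
\Psi[e_2]\,\Psi[e_1]=\Psi[e_1]\,\Psi[e_1+e_2]\,\Psi[e_2].
\]

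To verify this, I would work in $G=\lim_{\longleftarrow}G^{\leq l}$, checking the identity modulo $G^{>l}$ for each $l$, by one of two routes. The cleaner one uses a faithful representation of $G$ (equivalently of each $G^{\leq l}$) by automorphisms of a completed polynomial ring, as in \cite{GHKK18,Nak23}: there $\Psi[n]$ becomes an explicit ``mutation'' automorphism and the pentagon relation collapses to the elementary polynomial identity behind Abel's five-term relation for the dilogarithm — equivalently, the period-$5$ periodicity of the type $A_2$ $Y$-system — confirmed by direct substitution. The self-contained alternative stays inside $\mathfrak g$: since $\log\Psi[\,\cdot\,]$ is known in closed form to all orders, both sides of the $c=1$ identity have explicitly computable Baker--Campbell--Hausdorff expansions, and one matches their degree-$d$ components for every $d$ using the bracket above and the binomial identities of Section~\ref{Sec: PBC}; the key point is that $\log\Psi[e_1+e_2]$ contributes only to the diagonal generators $X_{(m,m)}$, so in every other degree the two expansions must already agree.

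Either way this verification is the main obstacle. The reduction to $\{e_2,e_1\}=1$ and the inverse-limit passage are routine, but obtaining a manageable closed form for the BCH expansion of $\Psi[e_1]\Psi[e_1+e_2]\Psi[e_2]$ — or, in the representation picture, composing the automorphisms and recognising the output as $\Psi[e_2]\Psi[e_1]$ — is the combinatorial heart of the dilogarithm pentagon, which is why this proposition is customarily imported from the cluster-algebra and quantum-dilogarithm literature rather than reproved here.
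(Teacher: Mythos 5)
The paper does not prove this proposition at all: it is imported verbatim from \cite[Prop.~III.1.14]{Nak23}, so your attempt should be measured against that citation rather than against an in-text argument. Relative to that, you actually do more than the paper. Your part (a) is complete and correct: every bracket $[X_{jn'},X_{kn}]=jk\{n',n\}X_{jn'+kn}$ vanishes, so the two logarithms commute and the exponentials commute. Your reduction of (b) is also sound: the sub-Lie-algebra spanned by the $X_{jn+kn'}$ has structure constants depending on $(n,n')$ only through $c=\{n',n\}$, and the rescaling $X_v\mapsto cX_v$ is a Lie isomorphism onto the $c=1$ model carrying $\Psi[\cdot]^{1/c}$ to $\Psi[\cdot]$, so the identity reduces to the normalized case $\Psi[e_2]\Psi[e_1]=\Psi[e_1]\Psi[e_1+e_2]\Psi[e_2]$. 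The remaining core --- the five-term identity itself --- you defer to the literature, exactly as the paper does, so there is no gap relative to the paper's treatment. One caveat on your sketched ``self-contained alternative'': the claim that $\log\Psi[e_1+e_2]$ contributes only to diagonal generators and hence the off-diagonal degrees ``must already agree'' is not right as stated, since the BCH expansion of $\Psi[e_1]\Psi[e_1+e_2]\Psi[e_2]$ produces nested brackets such as $[X_{(1,0)},X_{(1,1)}]$ landing in off-diagonal bidegrees; that route would still require the full degree-by-degree computation (or the faithful representation by automorphisms of a completed ring, which is the standard proof). Since you do not rely on that remark, it does not affect the correctness of what you actually establish.
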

Also, by \ref{eq: the BCH formula}, the following relation is immediately shown for any $n,n' \in N^{+}$, $\gamma,\gamma'' \in \mathbb{Q}$, and $l < \deg(n+n')$.
\begin{equation}\label{eq: exchange of higher degree}
\Psi[n']^{\gamma'} \Psi[n]^{\gamma} \equiv \Psi[n]^{\gamma}\Psi[n']^{\gamma'} \mod G^{> l}.
\end{equation}
Now, we introduce the notation
\begin{equation}
\dilog{a}{b} = \Psi[(a,b)]\quad((a,b) \in N^{+}).
\end{equation}
Also, we define the degree $\deg\mindilog{a}{b}$ by $\deg\left(\begin{smallmatrix}a\\b\\\end{smallmatrix}\right)=a+b$.
\begin{dfn}\label{dfn: order on N+}
We define the total order $\leq$ on $N^{+}$ as follows:
\begin{equation}
\begin{aligned}
(a,b) \leq (c,d) \Leftrightarrow&\ \{(a,b),(c,d)\}<0,\\
&\ \textup{or there exists $k \in \mathbb{Q}_{\geq 1}$ such that}\ (c,d)=k(a,b).
\end{aligned}
\end{equation}
Moreover, we also write it as $\mindilog{a}{b} \leq \mindilog{c}{d}$.
\end{dfn}
Note that, if $(c,d)=k(a,b)$, then $\{(a,b),(c,d)\}=0$. Thus, if $(a,b) \leq (c,d)$, then $\{(a,b),(c,d)\} \leq 0$ holds.\par
Let $J$ be a ordered countable set, and let $D=\left\{\mindilog{a_j}{b_j}^{u_j}\ |\ j \in J, (a_j,b_j) \in N^{+}, u_j \in \mathbb{Q}\right\}$ be a set. Suppose that for any $l \in \mathbb{Z}_{\geq 1}$, the set $J^{\leq l}= \{ j \in J\ |\ \deg(a_j,b_j) \leq l \}$ is finite, and we write $J^{\leq l}=\{j^l_0<j^l_1<\cdots<j^l_s\}$. Then, we define 
\begin{equation}
\prod_{j \in J} \dilog{a_j}{b_j}^{u_j} = \lim_{l \to \infty} \left(\dilog{a_{j^l_0}}{b_{j^l_0}}^{u_{j^l_0}}\dilog{a_{j^l_1}}{b_{j^l_1}}^{u_{j^l_1}}\cdots\dilog{a_{j^l_s}}{b_{j^l_s}}^{u_{j^l_s}}\right).
\end{equation}
\begin{dfn}
Let $J$ be an ordered countable set. Then, a product
\begin{equation}
\prod_{j \in J} \dilog{a_j}{b_j}^{u_j} \quad \left(({a_j},{b_j}) \in N^{+}, u_j \in \mathbb{Q}\right)
\end{equation}
is said to be {\em ordered} (resp. {\em anti-ordered}) if $\mindilog{a_i}{b_i} \leq \mindilog{a_j}{b_j}$ (resp. $\mindilog{a_i}{b_i} \geq \mindilog{a_j}{b_j}$) for any $i < j$ in $J$.
\end{dfn}
In particular, if $(a_i,b_i)<(a_j,b_j)$ for any $i < j \in J$, we say that the product $\prod_{j \in J} \mindilog{a_j}{b_j}^{u_j}$ is {\em strongly ordered}, and we write it by
\begin{equation}
\orderedprod_{j \in J}\dilog{a_j}{b_j}^{u_j}.
\end{equation}
Every ordered product becomes the strongly ordered product by gathering same dilogarithm elements.

\subsection{Cluster scattering diagrams}\label{Cluster scattering diagrams}
We introduce the notation
\begin{equation}
\sigma(m)=\mathbb{R}_{\geq 0}m \quad (m \in M_{\mathbb{R}}\backslash\{0\}).
\end{equation}
\begin{dfn}[Wall]
A {\em wall} $w=(\mathfrak{d}, g)_n$ for a seed $\mathfrak{s}$ consists of the following:
\begin{itemize}
\item $n \in N^{+}_{\mathrm{pr}}$ (The {\em normal vector} of $w$).
\item $\mathfrak{d} = \sigma(m)$ or $\mathfrak{d}=\sigma(m) \cup \sigma(-m)$, where $m \in M_{\mathbb{R}}\backslash\{0\}$ satisfies $\langle m, n \rangle$=0 (The {\em support} of $w$).
\item $g \in G$ is expressed as the following form (The {\em wall element} of $w$).
\begin{equation}\label{eq: parallel element}
g=\exp\left(\sum_{j=1}^\infty c_{j}X_{jn}\right) \quad (c_j \in \mathbb{Q}).
\end{equation}
\end{itemize}
\end{dfn}
It is known that the product of dilogarithm elements $\prod_{k = 1}^{\infty} \Psi[kn]^{a_k}\ (a_k \in \mathbb{Q})$ can be expressed as the form of (\ref{eq: parallel element}) \cite[Prop.~1.13]{Nak23}. 

\begin{dfn}
The group homomorphism $p^*: N \to M^{\circ}$ is defined by
\begin{equation}
p^*(n)=\{\cdot,n\}.
\end{equation}
Then, a wall $w=(\mathfrak{d},g)_{n}$ is {\em incoming} (resp. {\em outgoing}) if $p^*(n) \in \mathfrak{d}$ (resp. $p^{*}(n) \notin \mathfrak{d}$).  
\end{dfn}

\begin{dfn}[Scattering diagram]
A {\em scattering diagram} $\mathfrak{D}=\{w_{\lambda}\}_{\lambda \in \Lambda}$ is a collection of walls such that the following conditions hold.
\begin{itemize}
\item The index set $\Lambda$ is countable.
\item For each integer $l \in \mathbb{Z}_{>0}$, there are only finitely many walls $w_{\lambda}$ such that $\pi_{l}(g_{\lambda}) \neq \mathrm{id}$.
\end{itemize}
We define the {\em support} of $\mathfrak{D}$ by
\begin{equation}
\begin{aligned}
\mathrm{Supp}(\mathfrak{D}) &= \bigcup_{\lambda \in \Lambda} \mathfrak{d}_{\lambda}.
\end{aligned}
\end{equation}
\end{dfn}

\begin{dfn}[Admissible curve]
Let $\mathfrak{D}=\{w_{\lambda} = (\mathfrak{d}_{\lambda}, g_{\lambda})_{n_{\lambda}}\}_{\lambda \in \Lambda}$ be a scattering diagram. We say that a smooth curve $\gamma: [0,1] \to M_{\mathbb{R}}$ is {\em admissible} for $\mathfrak{D}$ if it satisfies the following conditions:
\begin{itemize}
\item $\gamma(0),\gamma(1) \notin \mathrm{Supp}(\mathfrak{D})$, and $\gamma(t) \neq 0$ for any $t$.
\item If $\gamma$ and $\mathfrak{d}_{\lambda}$ intersect, then $\gamma$ intersects $\mathfrak{d}_{\lambda}$ transversally.
\end{itemize}
\end{dfn}
Let $\gamma$ be an admissible curve for $\mathfrak{D}$. For each positive integer $l$, there exist only finitely many walls $w_i = (\mathfrak{d}_i,g_i)_{n_i}$ ($i=1,2,\dots,s$) such that $\gamma$ intersects $\mathfrak{d}_i$ and $\pi_{l}(g_i) \neq {\rm id}$. Let $t_i$ be a real number such that $\gamma(t_i)$ be the intersection of $\gamma$ and $\mathfrak{d}_i$, and assume
\begin{equation}
0<t_1 \leq t_2 \leq \cdots \leq t_s <1.
\end{equation}
We define the intersection sign $\epsilon_i$ by
\begin{equation}
\epsilon_i=\begin{cases}
1 & \langle n_i, \gamma'(t_i) \rangle<0,\\
-1 & \langle n_i, \gamma'(t_i) \rangle>0.\\
\end{cases}
\end{equation}
\begin{dfn}[Path ordered product]
As the above notations, we define the {\em path ordered product} $\mathfrak{p}_{\gamma,\mathfrak{D}}$ as
\begin{equation}
\mathfrak{p}_{\gamma,\mathfrak{D}}=\lim_{l \to \infty} (g_s^{\epsilon_s} \cdots g_1^{\epsilon_1}) \in G.
\end{equation}
\end{dfn}
\begin{dfn}[Equivalence]
Let $\mathfrak{D}$ and $\mathfrak{D}'$ be scattering diagrams. We say that $\mathfrak{D}$ and $\mathfrak{D}'$ are {\em equivalent} if $\mathfrak{p}_{\gamma, \mathfrak{D}}=\mathfrak{p}_{\gamma, \mathfrak{D}'}$ for any admissible curve $\gamma$ for both $\mathfrak{D}$ and $\mathfrak{D}'$,
\end{dfn}
\begin{dfn}[Consistency]\label{def: consistency}
Let $\mathfrak{D}$ be a scattering diagram. We say that $\mathfrak{D}$ is {\em consistent} if $\mathfrak{p}_{\gamma,\mathfrak{D}}=\mathfrak{p}_{\gamma',\mathfrak{D}}$ for any admissible curve $\gamma$ and $\gamma'$ with the same endpoints.
\end{dfn}
\begin{thm}[{\cite[Thm.~1.21]{GHKK18}}]\label{thm: CSD}
For any fixed data $\Gamma$ and seed $\mathfrak{s}$, there exists a consistent scattering diagram $\mathfrak{D}_{\Gamma,\mathfrak{s}}$ satisfying the following properties:
\begin{itemize}
\item Both $(e_1^{\perp}, \Psi[e_1]^{\delta_1})_{e_1}$ and $(e_2^{\perp}, \Psi[e_2]^{\delta_2})_{e_2}$ are incoming walls of $\mathfrak{D}_{\Gamma,\mathfrak{s}}$.
\item Every wall except for $(e_1^{\perp}, \Psi[e_1]^{\delta_1})_{e_1}$ and $(e_2^{\perp}, \Psi[e_2]^{\delta_2})_{e_2}$ is outgoing.
\end{itemize}
Moreover, a scattering diagram satisfying the above properties is unique up to the equivalence.
\end{thm}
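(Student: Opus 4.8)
The plan is the standard order-by-order argument of Kontsevich--Soibelman type (cf.\ \cite{GHKK18}, \cite{Nak23}), which in rank $2$ simplifies considerably because every wall contains the origin. The first reduction is to detect consistency by a single loop. Since $M_{\mathbb{R}}\setminus\{0\}\cong\mathbb{R}^2\setminus\{0\}$ deformation retracts onto a circle, I would first prove that the path-ordered product $\mathfrak{p}_{\gamma,\mathfrak{D}}$ depends only on the homotopy class of $\gamma$ rel endpoints in $M_{\mathbb{R}}\setminus\{0\}$: any admissible homotopy decomposes into moves that keep the curve off $\mathrm{Supp}(\mathfrak{D})$ (which manifestly preserve $\mathfrak{p}_{\gamma,\mathfrak{D}}$) and moves that push an arc across one wall and back, whose two transversal crossings contribute mutually inverse factors $g^{\epsilon}$, $g^{-\epsilon}$. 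Combined with $\mathfrak{p}_{\gamma_1*\bar{\gamma}_2}=\mathfrak{p}_{\gamma_2}^{-1}\mathfrak{p}_{\gamma_1}$, this shows $\mathfrak{D}$ is consistent if and only if $\mathfrak{p}_{\gamma_{\circ},\mathfrak{D}}=\mathrm{id}$ for one small simple loop $\gamma_{\circ}$ around the origin.

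\emph{Existence.} Construct $\mathfrak{D}=\bigcup_{l\geq 1}\mathfrak{D}^{(l)}$ inductively. Set $\mathfrak{D}^{(1)}=\{(e_1^{\perp},\Psi[e_1]^{\delta_1})_{e_1},\ (e_2^{\perp},\Psi[e_2]^{\delta_2})_{e_2}\}$; the loop $\gamma_{\circ}$ crosses each of the two lines twice with opposite signs, so $\pi_1(\mathfrak{p}_{\gamma_{\circ},\mathfrak{D}^{(1)}})=\mathrm{id}$. Assume $\mathfrak{D}^{(l)}$ consists of these two incoming walls together with finitely many outgoing walls of degree $\leq l$ and $\pi_l(\mathfrak{p}_{\gamma_{\circ},\mathfrak{D}^{(l)}})=\mathrm{id}$. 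Then $\log\pi_{l+1}(\mathfrak{p}_{\gamma_{\circ},\mathfrak{D}^{(l)}})\in\bigoplus_{\deg n=l+1}\mathfrak{g}_n$; write it as $\sum_{\deg n=l+1}c_nX_n$. For each such $n$, with primitive part $n_0$ (note $p^*(n_0)\neq 0$, since $\{\,,\,\}$ is non-degenerate in rank $2$), adjoin a wall with normal $n_0$, support the ray of $n_0^{\perp}$ not containing $p^*(n_0)$ (hence outgoing), and wall element a $\mathbb{Q}$-power of $\Psi[n]$ chosen so that its contribution to $\mathfrak{p}_{\gamma_{\circ}}$ has leading term $-c_nX_n$. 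These new contributions lie in the center of $G^{\leq l+1}$ (their brackets land in degree $\geq l+2$), and each shifts $\mathfrak{p}_{\gamma_{\circ}}$ by exactly $\exp(-c_nX_n)$ modulo $G^{>l+1}$; hence $\pi_{l+1}(\mathfrak{p}_{\gamma_{\circ},\mathfrak{D}^{(l+1)}})=\mathrm{id}$. Only finitely many walls (one per primitive $n_0$ with $\deg n_0\mid(l+1)$) are added at each step and none is altered afterwards, so the limit $\mathfrak{D}$ is a scattering diagram which is consistent, has the two prescribed incoming walls, and has all others outgoing.

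\emph{Uniqueness.} Let $\mathfrak{D},\mathfrak{D}'$ both satisfy the hypotheses; I claim they are equivalent modulo $G^{>l}$ for every $l$, by induction. For $l=1$, both path-ordered products reduce modulo $G^{>1}$ to the contribution of the two shared incoming walls. Assume equivalence modulo $G^{>l}$. Using the remark that every parallel wall element is a product of powers $\Psi[kn_0]$, together with the exchange relation~(\ref{exchange}) to merge and reorder commuting factors, replace $\mathfrak{D}'$ by an equivalent diagram whose walls of degree $\leq l$ coincide with those of $\mathfrak{D}$. Then, at order $l+1$, both diagrams must cancel the same discrepancy $\sum_{\deg n=l+1}c_nX_n$ produced by the common degree-$\leq l$ part; since for each primitive direction $n_0$ with $\deg n_0\mid(l+1)$ there is exactly one such term, and the outgoing requirement forces the support to be the prescribed ray and the element's leading term to be $-c_nX_n$, the degree-$(l+1)$ walls of $\mathfrak{D}$ and $\mathfrak{D}'$ agree up to equivalence. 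Hence $\mathfrak{D}\equiv\mathfrak{D}'$ modulo $G^{>l+1}$, and letting $l\to\infty$ yields the equivalence of $\mathfrak{D}$ and $\mathfrak{D}'$.

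The main obstacle is the homotopy-invariance statement: making rigorous that $\mathfrak{p}_{\gamma,\mathfrak{D}}$ is unchanged under admissible deformations, so that consistency is detected by a single loop — the crossing bookkeeping must be handled carefully, though in rank $2$ the absence of non-trivial joints (all walls meeting only at the origin) keeps it tractable. The only other delicate point is the normalization inside the uniqueness step, namely that two wall presentations on the same ray with the same primitive normal can be merged into a canonical one without changing the equivalence class; this again rests on~(\ref{exchange}) and the graded structure of $\mathfrak{g}$.
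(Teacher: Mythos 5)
This theorem is not proved in the paper at all: it is imported verbatim from \cite[Thm.~1.21]{GHKK18} (see also \cite{Nak23}), so there is no in-paper argument to compare against. Your sketch reconstructs the standard Kontsevich--Soibelman order-by-order argument that is used in those references, and as an outline it is essentially correct: the reduction of consistency to a single loop around the origin, the inductive cancellation of the degree-$(l+1)$ discrepancy by central elements $\exp(-c_nX_n)$ attached to the outgoing ray $-\mathbb{R}_{\geq 0}p^*(n_0)$, and the uniqueness induction are all the right steps, and you correctly identify that in rank $2$ the absence of joints away from the origin is what makes the homotopy-invariance bookkeeping tractable.

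Two points to tighten. First, your parenthetical claim that $\{\,,\,\}$ is non-degenerate is not part of the hypotheses: the fixed data only requires $\{\,,\,\}$ to be skew-symmetric, and $\{e_2,e_1\}=0$ is allowed, in which case $p^*\equiv 0$. That case is harmless ($\mathfrak{g}$ is then abelian, the two incoming walls already commute, and no outgoing walls are needed), but as written your construction of the outgoing ray breaks down there, so it should be split off. Second, the homotopy-invariance step you flag as the main obstacle is indeed the part that your sketch asserts rather than proves; since the whole reduction to the single loop $\gamma_{\circ}$ rests on it, a complete proof would have to carry out the elementary-move decomposition (this is done carefully in \cite{GHKK18} and \cite{Nak23}). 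With those caveats, the proposal is a faithful outline of the proof in the cited sources rather than a new route.
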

The above scattering diagram $\mathfrak{D}_{\Gamma,\mathfrak{s}}$ is called a {\em cluster scattering diagram} ({\em CSD} for short) for $\Gamma$ and $\mathfrak{s}$. Recall  the transformation (\ref{eq: rescaling}) of $\Gamma$. Under our assumption $\{e_2,e_1\}=1$, a CSD $\mathfrak{D}_{\Gamma,\mathfrak{s}}$ essentially depends on only positive integers $\delta_1$ and $\delta_2$. So, we write $\mathfrak{D}_{\Gamma,\mathfrak{s}}$ by $\mathfrak{D}_{\delta_1,\delta_2}$.\par
Let $(f_1,f_2)$ be the dual basis of $(\delta_1e_1,\delta_2,e_2)$, and let $\sigma=\{af_1+bf_2 \in M_{\mathbb{R}}\ |\ a>0, b<0 \}$. Then, the second condition in Theorem~\ref{thm: CSD} implies that there are no outgoing walls in the region $M_\mathbb{R}\backslash\sigma$. Thus, we may consider the admissible curves $\mathfrak{p}_{\gamma_{+},\mathfrak{D}_{\delta_1,\delta_2}}$ and $\mathfrak{p}_{\gamma_{-},\mathfrak{D}_{\delta_1,\delta_2}}$ as Figure~\ref{fig: CSD}.

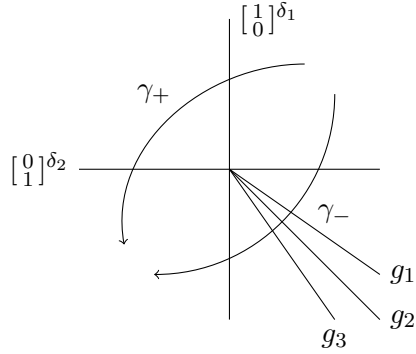
\begin{figure}[htbp]
\centering
\begin{tikzpicture}[scale=2]

\coordinate (O) at (0,0);
\coordinate (A) at (1,0);
\coordinate[label=right:{$\mindilog{1}{0}^{\delta_1}$}] (B) at (0,1);
\coordinate[label=left:{$\mindilog{0}{1}^{\delta_2}$}] (C) at (-1,0);
\coordinate (D) at (0,-1);
\coordinate[label=right:{$g_1$}] (E_1) at (1,-0.7);
\coordinate[label=right:{$g_2$}] (E_2) at (1,-1);
\coordinate[label=below:{$g_3$}] (E_3) at (0.7,-1);

\draw (O)--(A);
\draw (O)--(B);
\draw (O)--(C);
\draw (O)--(D);
\draw (O)--(E_1);
\draw (O)--(E_2);
\draw (O)--(E_3);

\draw[->] (0.5,0.7) to [out=180,in=100] (-0.7,-0.5);
\draw (-0.5,0.5)node{$\gamma_+$};

\draw[->] (0.7,0.5) to [out=-90,in=0] (-0.5,-0.7);
\draw (0.7,-0.3)node{$\gamma_-$};
\end{tikzpicture}
\caption{Cluster scattering diagram}\label{fig: CSD}
$\mathfrak{p}_{\gamma_{+},\mathfrak{D}}=\mindilog{0}{1}^{\delta_2}\mindilog{1}{0}^{\delta_1}$, $\mathfrak{p}_{\gamma_{-},\mathfrak{D}}=\mindilog{1}{0}^{\delta_1}g_3g_2g_1\mindilog{0}{1}^{\delta_2}$.\\
Moreover, $g_i$ can be expressed as $\prod_{k=1}^{\infty} \Psi[kn_i]^{u_k}$, where $n_i \in N^{+}$ is the normal vector and $u_{k} \in \mathbb{Q}$.
\end{figure}
Moreover, $\mathfrak{p}_{\gamma_{+},\mathfrak{D}_{\delta_1,\delta_2}}=\mindilog{0}{1}^{\delta_2}\mindilog{1}{0}^{\delta_1}$ holds. Thus, the consistency condition of a CSD is equivalent to $\mindilog{0}{1}^{\delta_2}\mindilog{1}{0}^{\delta_1}=\mathfrak{p}_{\gamma_{-},\mathfrak{D}_{\delta_1,\delta_2}}$.\par
For any $\delta_{1},\delta_{2} \in \mathbb{Z}_{> 0}$, all wall elements in $\mathfrak{D}_{\delta_1,\delta_2}$ can be expressed as the form $\Psi[n]^{s\delta(n)}$, where $n \in N^{+}$ and $s \in \mathbb{Z}_{> 0}$. The resulting CSD is called the {\em positive realization} \cite{Nak23}. The following theorem is a key for a positive realization.
\begin{thm}[{\cite[Prop.~III.5.4]{Nak23}}]\label{ordering lemma}
Let
\begin{equation}
C^{\textup{in}} = \Psi[n'_k]^{s'_k\delta(n'_k)} \cdots \Psi[n'_1]^{s'_1\delta(n'_1)} \quad (n'_j \in N^{+}, s'_j \in \mathbb{Z}_{>0})
\end{equation}
be any finite anti-ordered product. Then, there exists an unieque strongly ordered product
\begin{equation}
C^{\textup{out}}=\orderedprod_{j}\Psi[n_j]^{s_j\delta(n_j)} \quad (n_j \in N^{+}, s_j \in \mathbb{Z}_{>0})
\end{equation}
which is equal to $C^{\textup{in}}$ as the element of $G$. Moreover, $n_j$ satisfies $n'_1 \leq n_j \leq n'_k$.
\end{thm}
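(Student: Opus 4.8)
The plan is to prove both assertions (existence of an ordered representative with exponents of the prescribed shape $s\,\delta(n)$, $s\in\mathbb{Z}_{>0}$, and its uniqueness) inside each truncation $G^{\leq l}$, which is a finite‑dimensional nilpotent group, by induction on $l$, and then pass to the inverse limit; uniqueness is the softer of the two parts and I would deal with it first, since it is what lets the truncated statements glue together.

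\textbf{Uniqueness.} I would compare Baker--Campbell--Hausdorff logarithms in $\mathfrak{g}$. Suppose $\orderedprod_j \Psi[n_j]^{c_j}=\orderedprod_j \Psi[m_j]^{d_j}$ in $G$, both ordered with positive $c_j,d_j$ (after merging parallel factors into one block per ray, the blocks being in the circular order induced by $\leq$). Writing $\log\Psi[n]^{c}=cX_n+(\text{terms of degree}>\deg n)$ and using that in rank $2$ any bracket lies in $[X_a,X_b]\in\mathfrak{g}_{a+b}$ with $\deg(a+b)>\max(\deg a,\deg b)$, the homogeneous component of degree $d_0:=\min_j\deg n_j$ of the logarithm of the left‑hand side is exactly $\sum_{\deg n_j=d_0}c_jX_{n_j}$, and likewise for the right‑hand side. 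Since $\dim\mathfrak{g}_n=1$ and the $X_n$ form a basis, the degree‑$d_0$ data (directions and coefficients) of the two products must agree; iterating this comparison degree by degree forces the two ordered products to coincide term by term. In particular the ordered form in $G^{\leq l}$ is unique, the family over varying $l$ is compatible, and its inverse limit is a (necessarily countable, since it stabilizes in each degree) ordered product over $G$.

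\textbf{Existence.} The engine is a reordering algorithm built only from the relations of Proposition~\ref{pentagon relation}, so the ``moreover'' part is automatic from the construction. Given a finite anti‑ordered (or partially reordered) word, I would locate an adjacent pair $\Psi[n']^{a}\Psi[n]^{b}$ that is out of order and non‑parallel, so $\{n',n\}>0$, and rewrite it --- using the pentagon relation, iterated in a controlled order, together with the commutation relation \eqref{exchange} whenever a pair becomes parallel --- as $\Psi[n]^{b}$ times an ordered product of factors $\Psi[p]^{c_p}$ with $p=\alpha n+\beta n'$, $\alpha,\beta\geq 1$, and $c_p$ positive, times $\Psi[n']^{a}$, modulo $G^{>l}$. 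Positivity is preserved because the pentagon applied to an out‑of‑order pair introduces $\Psi[n+n']^{1/\{n',n\}}$ with $1/\{n',n\}>0$, and one checks $n\leq n+n'\leq n'$ so the resulting triple is itself ordered; the arithmetic shape $s\,\delta(p)$ of each new exponent is controlled using $\{N^{\circ},N\}\subseteq\mathbb{Z}$. Since every new interior direction $p$ satisfies $\deg p\geq\deg n+\deg n'$, only finitely many can appear in $G^{\leq l}$, and organized as in \cite[Algorithm~III.5.7]{Nak23} a well‑founded measure (built from the degrees of the out‑of‑order adjacent pairs) strictly decreases at each step, so the process halts with an ordered product; merging parallel factors and combining exponents then yields the required form.

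\textbf{Main obstacle.} The hard part is the single‑pair rewriting when $a$ and $b$ are neither equal nor equal to $1/\{n',n\}$: one must stratify $\Psi[n']^{a}$ (and $\Psi[n]^{b}$) into pieces on which the pentagon applies directly, apply it many times in the correct order, and keep simultaneous control of termination, of positivity, and of the fact that each emerging exponent is an integer multiple of the relevant normalization factor $\delta(p)$. This is exactly the content of \cite[Prop.~III.5.4, Algorithm~III.5.7]{Nak23}, and the delicacy is entirely in the interplay between the normalization factors $\delta(n)$ and the skew‑symmetric form $\{\,,\,\}$ --- which is precisely why the hypothesis $\{N^{\circ},N\}\subseteq\mathbb{Z}$ is indispensable.
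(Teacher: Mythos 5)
This statement is quoted in the paper from \cite[Prop.~III.5.4, Algorithm~III.5.7]{Nak23} without proof, so there is no in-paper argument to compare against; what can be judged is whether your outline matches the standard proof in the cited source, and it does. Your uniqueness argument (compare graded components of the Baker--Campbell--Hausdorff logarithm degree by degree, using that $\mathfrak{g}_n$ is one-dimensional and that brackets and the higher terms of each $\Psi[n]$ only contribute in strictly larger degree, then pass to the inverse limit) and your existence argument (a rewriting algorithm in each $G^{\leq l}$ built solely from Proposition~\ref{pentagon relation}, with positivity coming from $1/\{n',n\}>0$ and finiteness from $\deg(n+n')>\max(\deg n,\deg n')$) are exactly the mechanisms of Nakanishi's proof, and they also make the ``moreover'' clause automatic, as you note.

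The one substantive caveat is that the step you label the ``main obstacle'' --- reordering a single anti-ordered pair $\Psi[n']^{a}\Psi[n]^{b}$ when $a$ and $b$ are not both equal to $1/\{n',n\}$, while simultaneously controlling termination, positivity, and the integrality $s\,\delta(p)$ of every emerging exponent --- is not a technicality but the actual content of \cite[Prop.~III.5.4]{Nak23}; splitting the factors into pentagon-sized pieces produces nested products whose ordering must itself be re-established, and the integrality of the new exponents requires a genuine computation with the normalization factors rather than just the observation $\{N^{\circ},N\}\subseteq\mathbb{Z}$. Since you explicitly defer this to the reference rather than carry it out, your text is an accurate account of the cited theorem and its proof strategy, but not a self-contained proof; for the purposes of this paper, which likewise imports the result, that is the appropriate level of detail.
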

In the above notations, we call $C^{\textup{out}}$ the {\em strongly ordered product expression} of $C^{\textup{in}}$. In particular, for any $\delta_1, \delta_2 \in \mathbb{Z}^2_{> 0}$, an anti-ordered product $\mindilog{0}{1}^{\delta_2}\mindilog{1}{0}^{\delta_1}$ is expressed as the (possibly infinte) strongly ordered product
\begin{equation}\label{eq: consistency}
\dilog{0}{1}^{\delta_2}\dilog{1}{0}^{\delta_1}=\orderedprod_{j \in J} \dilog{a_j}{b_j}^{s_{(a_j,b_j)}\delta(a_j,b_j)}.
\end{equation}
Then, the CSD $\mathfrak{D}_{\delta_1,\delta_2}$ is described as $\mathfrak{D}_{\delta_1,\delta_2} = \{{\bf w}_{e_1},{\bf w}_{e_2}\}\cup\{{\bf w}_{(a,b)}\}_{(a,b) \in N^{+}_{\mathrm{pr}}\backslash\{(1,0),(0,1)\}}$, where ${\bf w}_{e_1}=(e_1^{\perp}, \mindilog{1}{0}^{\delta_1})_{e_1}$, ${\bf w}_{e_2}=(e_2^{\perp}, \mindilog{0}{1}^{\delta_2})_{e_2}$, and, for any $(a,b) \in N^{+}_{\mathrm{pr}}\backslash\{(1,0),(0,1)\}$, ${\bf w}_{(a,b)}=(\sigma(\delta_2b,-\delta_1a),g_{(a,b)})_{(a,b)}$ with
\begin{equation}
g_{(a,b)}=\prod_{k=1}^{\infty} \dilog{ka}{kb}^{s_{(ka,kb)}\delta(ka,kb)}.
\end{equation}
In this CSD, (\ref{eq: consistency}) coincides with the consistency condition. The main purpose of this paper is to describe these exponents $s_{(ka,kb)}\delta(ka,kb)$ explicitly.
\par
The following property is known.
\begin{lem}[{\cite[Prop.~III.1.24.~(b)]{Nak23}}]\label{lem: the ends of ordering products}
Let $\delta_1$ and $\delta_2$ be positive integers. Then, the ordered product of $\mindilog{0}{1}^{\delta_2}\mindilog{1}{0}^{\delta_1}$ is expressed as
\begin{equation}
\dilog{1}{0}^{\delta_1}\dilog{\delta_1}{1}^{\delta_2}\cdots\dilog{1}{\delta_2}^{\delta_1}\dilog{0}{1}^{\delta_2}.
\end{equation}
\end{lem}

\section{Similarity transformations in the structure group}\label{Sec: Similarity transformations}
One of the most important properties in CSDs is the consistency condition (\ref{eq: consistency}), which is the relation in the structure group $G$. By Theorem~\ref{ordering lemma}, we may describe it by dilogarithm elements. In this section, we introduce an action on $G$, and we apply it for the consistency conditions.
\par
We fix a seed $(e_1,e_2)$ satisfying $\{e_2,e_1\}=1$, and we view $N=\mathbb{Z}^2$ and $N^{+}=\mathbb{Z}_{\geq 0}^2\backslash\{(0,0)\}$ as (\ref{eq: identities}).
For any $\left(\begin{smallmatrix}a\\b\end{smallmatrix}\right) \in N$ and matrix $F \in \mathrm{Mat}_2(\mathbb{Z})$, $F\mpmatrix{a}{b} \in N$ is defined by the usual matrix multiplication.\par
This definition and Proposition~\ref{prop: group hom} are due to Peigen Cao.
\begin{dfn}[Similarity transformations]
Let $F \in \textup{Mat}_2(\mathbb{Z}_{\geq 0})$ with $|F| \neq 0$. Then, we define the linear action of $F$ on $\mathfrak{g}$ by
\begin{equation}\label{eq: action on g}
FX = \frac{1}{|F|}\sum_{n \in N^{+}} c_nX_{Fn} \quad \bigl(X = \sum_{n \in N^{+}} c_nX_n \in \mathfrak{g},\ c_n \in \mathbb{Q}\bigr).
\end{equation}
Moreover, we define the action of $F$ on $G$ by
\begin{equation}\label{eq: action on G}
Fg=\exp(FX) \quad (g=\exp(X) \in G,\ X \in \mathfrak{g}).
\end{equation}
We call it the {\em similarity transformation} on $G$ by $F$.
\end{dfn}
\begin{prop}\label{prop: group hom}
Let $F \in \mathrm{Mat}_2(\mathbb{Z}_{\geq 0})$ with $|F|\neq0$. Then, the following statements hold.\\
\textup{(a)}\ For any $n,n' \in N^{+}$, it holds that
\begin{equation}
\left\{Fn,Fn'\right\}=|F|\{n,n'\}.
\end{equation}
\textup{(b)}\ For any $X,Y \in \mathfrak{g}$, it holds that
\begin{equation}
F[X,Y]=[FX,FY].
\end{equation}
\textup{(c)}\ For any $g,g' \in G$, it holds that
\begin{equation}
F(gg')=(Fg)(Fg').
\end{equation}
Namely, the similarity transformation by $F$ is a group homomorphism on $G$.
\end{prop}
\begin{proof}
(a)\ Let $n=(a,b)$ and $n'=(c,d)$. We have
\begin{equation}
\begin{aligned}
&\left\{F\left(\begin{matrix}a\\b\\\end{matrix}\right),F\left(\begin{matrix}c\\d\\\end{matrix}\right)\right\} \overset{(\ref{eq: skew bilinear form})}{=}  -\left|\begin{matrix}F\left(\begin{matrix}a\\b\\\end{matrix}\right)&F\left(\begin{matrix}c\\d\\\end{matrix}\right)\end{matrix}\right| = -|F|\left|\begin{matrix}a & c\\b & d\\\end{matrix}\right|\\
=&\ |F|\left\{\left(\begin{matrix}a\\b\\\end{matrix}\right),\left(\begin{matrix}c\\d\\\end{matrix}\right)\right\}.
\end{aligned}
\end{equation}
(b)\ By the linearity of this action, it suffices to show that $F[X_{n},X_{n'}]=[FX_{n},FX_{n'}]$ for any $n,n' \in N^{+}$. We have
\begin{equation}
\begin{aligned}
[FX_{n},FX_{n'}]&\overset{(\ref{eq: action on g})}=\left[\frac{1}{|F|}X_{Fn},\frac{1}{|F|}X_{Fn'}\right]=\frac{1}{|F|^2}[X_{Fn},X_{Fn'}]\\
&\overset{(\ref{eq: Lie bracket})}= \frac{1}{|F|^2}\{Fn,Fn'\}X_{F(n+n')}\overset{\textup{(a)}}=\frac{1}{|F|}\{n,n'\}X_{F(n+n')}\\
&\overset{(\ref{eq: action on g})}=F\left(\{n,n'\}X_{n+n'}\right)\overset{(\ref{eq: Lie bracket})}=F[X_n,X_{n'}].
\end{aligned}
\end{equation}
(c)\ By (b), the action on $\mathfrak{g}$ preserves the Lie bracket. It implies that the action on $G$ also preserves the product defined by the Backer-Campbell-Hausdorff formula.
\end{proof}
We apply this transformation for dilogarithm elements. For any $(a,b) \in N^{+}$, we have
\begin{equation}
\begin{aligned}
F\dilog{a}{b}&\overset{(\ref{eq: dilogarithm elements})}=F\exp\left(\sum_{j=1}^{\infty}\frac{(-1)^{j+1}}{j^2}X_{j\mpmatrix{a}{b}}\right)=\exp\left(\frac{1}{|F|}\sum_{j=1}^{\infty}\frac{(-1)^{j+1}}{j^2}X_{jF\mpmatrix{a}{b}}\right)\\
&=\exp\left(\sum_{j=1}^{\infty}\frac{(-1)^{j+1}}{j^2}X_{jF\mpmatrix{a}{b}}\right)^{1/|F|}=\left[F\begin{pmatrix}a\\b\\\end{pmatrix}\right]^{1/|F|}.
\end{aligned}
\end{equation}
Next, we try to apply this transformation for ordered products.
\begin{lem}\label{action for pentagon relation}
Let $(a,b),(c,d) \in N^{+}$, and let $F \in \mathrm{Mat}_2(\mathbb{Z}_{\geq 0})$ with $|F| > 0$. Then, the following relations hold.\\
\textup{(a).}
\begin{equation}
\deg F\left(\begin{matrix}a \\ b \\\end{matrix}\right) \geq \deg \left(\begin{matrix}a \\ b \\\end{matrix}\right).
\end{equation}
Moreover, if $F \neq \left(\begin{smallmatrix} 1 & 0\\ 0 & 1\\\end{smallmatrix}\right)$ and $a,b>0$, then
\begin{equation}
\deg F\left(\begin{matrix}a \\ b \\\end{matrix}\right) > \deg \left(\begin{matrix}a \\ b \\\end{matrix}\right).
\end{equation}
\textup{(b).}\ If $(c,d) > (a,b)$, then
\begin{equation}
F\left(\begin{matrix}c \\ d \end{matrix}\right)> F\left(\begin{matrix} a \\ b \\\end{matrix}\right).
\end{equation}
\end{lem}
In the above lamma, the order in (b) is defined in Definition~\ref{dfn: order on N+}.
\begin{proof}
(a)\ Let $F=\left(\begin{smallmatrix}\alpha & \gamma\\ \beta & \delta\end{smallmatrix}\right)$ with $\alpha,\beta,\gamma,\delta \geq 0$. Then, since $|F| \neq 0$, we have $\alpha+\beta, \gamma+\delta \geq 1$. Thus, we have
\begin{equation}\label{eq: degree lemma}
\begin{aligned}
\deg F\begin{pmatrix}a \\ b \\\end{pmatrix}
&= \deg \begin{pmatrix} a\alpha + b \gamma \\ a\beta+b\gamma \end{pmatrix}=a(\alpha+\beta)+b(\gamma+\delta)\\
&\geq a+b = \deg\begin{pmatrix}a\\b\end{pmatrix}.
\end{aligned}
\end{equation}
Hence, the first statement holds. If $|F| \neq \left(\begin{smallmatrix} 1 & 0\\ 0 & 1\\\end{smallmatrix}\right)$, either $\alpha+\beta\geq2$ or $\gamma+\delta\geq2$ holds. Thus, by a similar argument to (\ref{eq: degree lemma}), we have $\deg F\mpmatrix{a}{b} > \deg\mpmatrix{a}{b}$.
\\
(b)\ The inequality $(c,d)>(a,b)$ implies either $\{\left(\begin{smallmatrix}c \\ d\\\end{smallmatrix}\right),\left(\begin{smallmatrix}a \\ b\\\end{smallmatrix}\right)\} > 0$ or $(c,d)=k(a,b)$ with $k > 1$. If $\{\left(\begin{smallmatrix}c \\ d\\\end{smallmatrix}\right),\left(\begin{smallmatrix}a \\ b\\\end{smallmatrix}\right)\} > 0$, then $\{F\left(\begin{smallmatrix}c \\ d\\\end{smallmatrix}\right),F\left(\begin{smallmatrix}a \\ b\\\end{smallmatrix}\right)\} = |F| \{\left(\begin{smallmatrix}c \\ d\\\end{smallmatrix}\right),\left(\begin{smallmatrix}a \\ b\\\end{smallmatrix}\right)\} > 0$. If $(c,d)=k(a,b)$, then $F\binom{c}{d}=k(F\binom{a}{b})$. Thus, $F\binom{c}{d} > F\binom{a}{b}$ holds.
\end{proof}
Let $m,n \in \mathbb{Z}_{\geq 1}$. Consider the equality
\begin{equation}
\dilog{0}{1}^n\dilog{1}{0}^m = \dilog{1}{0}^m \left\{\orderedprod_{j \in J} \dilog{a_j}{b_j}^{u_j}\right\}\dilog{0}{1}^n.
\end{equation}
By Theorem~\ref{ordering lemma}, the above equality exists.
Let $F \in \mathrm{Mat}_2(\mathbb{Z}_{\geq 0})$ with $|F|>0$, and we act $F$ for the above equality. Then, by Proposition~\ref{prop: group hom}~(c), we have
\begin{equation}\label{eq: F-action on a CSD}
\left(F\dilog{0}{1}\right)^n\left(F\dilog{1}{0}\right)^m = \left(F\dilog{1}{0}\right)^m \left\{\orderedprod_{j \in J} \left(F\dilog{a_j}{b_j}\right)^{u_j}\right\}\left(F\dilog{0}{1}\right)^n.
\end{equation}
Moreover, by Lemma~\ref{action for pentagon relation}~(b), the RHS is strongly ordered. More strongly, we have the following statement.

\begin{prop}\label{prop: proceed lemma}
Let $F=\left(\begin{smallmatrix} a & c\\ b & d \\ \end{smallmatrix}\right)$ be a matrix with $(a,b),(c,d) \in N^{+}$ and $|F| > 0$. Assume $F \neq \left(\begin{smallmatrix}1 & 0\\ 0 & 1\\ \end{smallmatrix}\right)$. Let $m,n \in \mathbb{Z}_{> 0}$, and let $C = \mindilog{1}{0}^m\{\orderedprod_{j \in J} \mindilog{a_{j}}{b_{j}}^{u_{j}}\}\mindilog{0}{1}^n$ be the strongly ordered product expression of $\mindilog{0}{1}^n\mindilog{1}{0}^m$. Then, for any $l \in \mathbb{Z}_{>0}$, it holds that
\begin{equation}\label{eq: proceed lemma}
\begin{aligned}
&\ \dilog{c}{d}^{n/|F|}\dilog{a}{b}^{m/|F|} \\
\equiv&\ \dilog{a}{b}^{m/|F|}\biggl\{\orderedprod_{\substack{j \in J,\\ \deg(a_j,b_j) \leq l}}\left(F\dilog{a_j}{b_j}\right)^{u_{j}}\biggr\}\dilog{c}{d}^{n/|F|}   \mod G^{> l+1}.
\end{aligned}
\end{equation}
\end{prop}
Proposition~\ref{prop: proceed lemma} says that, if we find the strongly ordered expression of $\mindilog{0}{1}^n\mindilog{1}{0}^m$ in $G^{\leq l}$, that is,
\begin{equation}
\dilog{0}{1}^n\dilog{1}{0}^m \equiv \dilog{1}{0}^m\biggl\{\orderedprod_{\deg(a_j,b_j) \leq l} \dilog{a_j}{b_j}^{u_j}\biggr\}\dilog{0}{1}^n \mod G^{> l},
\end{equation}
then, for any anti-ordered product of the form $\mindilog{c}{d}^{n'/\gcd(a,b)}\mindilog{a}{b}^{m'/\gcd(a,b)}$ ($m',n' \in \mathbb{Z}_{>0}$) except $\mindilog{0}{1}^{n'}\mindilog{1}{0}^{m'}$, we can find the strongly ordered product expression (\ref{eq: proceed lemma}) in $G^{\leq l+1}$. The difference between $G^{\leq l}$ and $G^{\leq l+1}$ is essential in this paper.
\begin{proof}
By (\ref{eq: F-action on a CSD}), we have
\begin{equation}\label{eq: proceed lemma 1}
\begin{aligned}
&\ \dilog{c}{d}^{n/|F|}\dilog{a}{b}^{m/|F|} = \dilog{a}{b}^{m/|F|}\biggl\{\orderedprod_{\substack{j \in J}}\left(F\dilog{a_j}{b_j}\right)^{u_{j}}\biggr\}\dilog{c}{d}^{n/|F|}.
\end{aligned}
\end{equation}
By Lemma~\ref{lem: the ends of ordering products}, both $a_j$ and $b_j$ are positive integers for any $j \in J$. Moreover, by the assumptions, $F \neq \left(\begin{smallmatrix}1 & 0\\ 0 & 1\\ \end{smallmatrix}\right),\left(\begin{smallmatrix}0 & 1\\ 1 & 0\\ \end{smallmatrix}\right)$ holds. Because of Lemma~\ref{action for pentagon relation}~(b), if $\deg(a_j,b_j) \geq l+1$, then $\deg F\mpmatrix{a_j}{b_j} \geq l+2$, namely, $\pi_{l+1}(F\mindilog{a_j}{b_j})=\mathrm{id}$ holds. Thus, in $G^{\leq l+1}$, we may eliminate all factors $F\mindilog{a_j}{b_j}^{u_j}$ satisfying $\deg(a_j,b_j) \geq l+1$. Then, we have 
\begin{equation}
\prod_{\substack{j \in J}}\left(F\dilog{a_j}{b_j}\right)^{u_{j}} \equiv \prod_{\substack{j \in J,\\ \deg(a_j,b_j) \leq l}}\left(F\dilog{a_j}{b_j}\right)^{u_{j}} \mod G^{>l+1}.
\end{equation}

Putting the above expression to the (\ref{eq: proceed lemma 1}), we have
\begin{equation}
\begin{aligned}
&\ \dilog{c}{d}^{n/|F|}\dilog{a}{b}^{m/|F|} \\
\equiv&\ \dilog{a}{b}^{m/|F|}\biggl\{\orderedprod_{\substack{j \in J,\\ \deg(a_j,b_j) \leq l}}\left(F\dilog{a_j}{b_j}\right)^{u_{j}}\biggr\}\dilog{c}{d}^{n/|F|}   \mod G^{> l+1}.
\end{aligned}
\end{equation}
\end{proof}

\section{Calculation method and admissible forms of exponents}\label{Sec: Construction}
Let $(a,b) \in N^{+}$ and let $m,n \in \mathbb{Z}_{\geq 0}$.
Then, we define the rational number $u_{(a,b)}(m,n)$ as the exponent of $\mindilog{a}{b}$ in the strongly ordered product expression of $\mindilog{0}{1}^n\mindilog{1}{0}^m$. Also, we define $\tilde{u}_{(a,b)}(m,n)=d(a,b)^{-1}u_{(a,b)}(m,n)$. Namely, $\tilde{u}_{(a,b)}(m,n)$ is defined by
\begin{equation}\label{eq: definition of u(a,b)}
\dilog{0}{1}^n\dilog{1}{0}^m=\orderedprod_{(a,b) \in N^{+}} \dilog{a}{b}^{d(a,b)\tilde{u}_{(a,b)}(m,n)}.
\end{equation}
In this section, we introduce a method to calculate $u_{(a,b)}(m,n)$ as a function of $m$ and $n$, and we show the following property based on this method.
\begin{thm}\label{thm: exponents PBC}
For any $(a,b) \in N^{+}$, $\tilde{u}_{(a,b)}(m,n)$ is expressed as a nonnegative PBC in $m$ and $n$.
\end{thm}
\subsection{Calculation method}
By Theorem~\ref{ordering lemma} for $\delta_1=\delta_2=1$, for any $m,n \in \mathbb{Z}_{>0}$, $u_{(a,b)}(m,n) \in d(a,b)\mathbb{Z}_{\geq 0}$ holds, and it implies that $\tilde{u}_{(a,b)}(m,n) \in \mathbb{Z}_{\geq 0}$. Recall that $d(a,b)=1/\gcd(a,b)$ is the normalization factor of $(a,b)$ with respect to $(1,1)$.\par
\begin{ex}
Since the ordered product of $\mindilog{1}{0}^m$ is itself, we have
\begin{equation}\label{eq: exponents of n=0}
u_{(a,b)}(m,0)=\begin{cases}
m & (a,b)=(1,0),\\
0 & \textup{otherwise}.
\end{cases}
\end{equation}
By Lemma~\ref{lem: the ends of ordering products}, we have
\begin{equation}\label{eq: exponents of b=0}
u_{(a,0)}(m,n)=\begin{cases}
m & a=1,\\
0 & a\neq1,
\end{cases}
\quad
u_{(0,b)}(m,n)=\begin{cases}
n & b=1,\\
0 & b\neq0.
\end{cases}
\end{equation}
Next, we find $u_{(1,1)}(m,n)$.
By (\ref{eq: exchange of higher degree}), the term $\mindilog{1}{1}$ is commutive for every factor in $G^{\leq 2}$. Thus, we have
\begin{equation}
\begin{aligned}
\dilog{0}{1}^{n}\dilog{1}{0}^{m} &= \dilog{0}{1}^{n-1}\left(\dilog{0}{1}\dilog{1}{0}\right)\dilog{1}{0}^{m-1}\overset{(\ref{eq-pentagon relation})}= \dilog{0}{1}^{n-1}\dilog{1}{0}\dilog{1}{1}\dilog{0}{1}\dilog{1}{0}^{m-1}\\
&\overset{(\ref{eq-pentagon relation})}= \dilog{0}{1}^{n-2} \dilog{1}{0} \dilog{1}{1} \left(\dilog{0}{1} \dilog{1}{1}\right) \dilog{0}{1} \dilog{1}{0}^{m-1}\\
&\overset{(\ref{eq: exchange of higher degree})} \equiv \dilog{0}{1}^{n-2} \dilog{1}{0} \dilog{1}{1}^2 \dilog{0}{1}^2 \dilog{1}{0}^{m-1} \mod G^{>2}.
\end{aligned}
\end{equation}
By repeating this rearrengement until we obtain the strongly ordered product expression, the equality $\mindilog{0}{1}\mindilog{1}{0}=\mindilog{1}{0}\mindilog{1}{1}\mindilog{0}{1}$ is used $mn$ times, and it implies that the factor $\mindilog{1}{1}$ is produced $mn$ times. Thus, we obtain
\begin{equation}
\dilog{0}{1}^n\dilog{1}{0}^m \equiv \dilog{1}{0}^m \dilog{1}{1}^{mn} \dilog{0}{1}^n.
\end{equation}
We have
\begin{equation}\label{eq: u_(1,1)(m,n)}
u_{(1,1)}(m,n)=mn.
\end{equation}
\end{ex}
Note that Theorem~\ref{thm: exponents PBC} holds for any $(a,b)$ with $a+b=1,2$.
\par
\begin{dfn}\label{dfn: complete}
Let $C=\prod_{j \in J}\mindilog{a_j}{b_j}^{u_j}$ be a finite product. Then, we define the {\em stable part} $C^{\mathrm{stab}}$ and the {\em unstable part} $\hat{C}$ of $C$ as follows:
\begin{itemize}
\item Let $j_0$ be the largest element in $J$ such that there exists $k < j_0$ with $\mindilog{a_{k}}{b_{k}} \geq \mindilog{a_{j_0}}{b_{j_0}}$.
\item Define ${C}^{\mathrm{stab}} = \prod_{j_0 < j} \mindilog{a_j}{b_j}^{u_j}$, and $\hat{C}=\prod_{j \leq j_0} \mindilog{a_j}{b_j}^{u_j}$.
\end{itemize}
If there are no such $j_0$, or equivalently, if $C$ is strongly ordered, we define $C^{\mathrm{stab}}=C$ and $\hat{C}=\mathrm{id}$.
\end{dfn}
By definition, the following statements hold for any finite product $C$.
\begin{itemize}
\item $C=\hat{C}{C}^{\mathrm{stab}}$.
\item The stable part ${C}^{\mathrm{stab}}$ is either a strongly ordered product or $\mathrm{id}$.
\item Let $\mindilog{x}{y}$ and $\mindilog{z}{w}$ be dilogarithm elements appearing in ${C}^{\mathrm{stab}}$ and $\hat{C}$, respectively. Then, $\mindilog{z}{w} < \mindilog{x}{y}$ holds.
\end{itemize}
Let $\hat{C}'$ be the strongly ordered product expression of $\hat{C}$. Then, by Theorem~\ref{ordering lemma}, every dilogarithm element $\mindilog{z}{w}$ appearing in $\hat{C}'$ is smaller than all dilogarithm elements appearing in $C^{\mathrm{stab}}$. Thus, $\hat{C}'{C}^{\mathrm{stab}}$ is the strongly ordered product expression of $C$.\par
In order to obtain the explicit forms of $u_{(a,b)}(m,n)$ as the function of $m$ and $n$, we often consider the product
\begin{equation}\label{eq: product with polynomial}
C=\prod_{j \in \bar{J}} \dilog{a_j}{b_j}^{d(a_j,b_j)f_j(m,n)}, 
\end{equation}
where
\begin{itemize}
\item the index set $\bar{J}$ is finite.
\item for each $j \in \bar{J}$, $f_j: \mathbb{Z}_{\geq 0} \times \mathbb{Z}_{\geq 0} \to \mathbb{Z}_{\geq 0}$ is a function.
\item $m$ and $n$ are integer variables.
\end{itemize}
In this case, we view $\mindilog{a_j}{b_j}^{d(a_j,b_j)f_j(m,n)}$ as a factor of $C$ for each $j \in \bar{J}$.
\begin{lem}\label{ordering algorithm'}
Let $l \in \mathbb{Z}_{\geq 1}$. Let $C$ be a product with the above form, and let $\displaystyle{\hat{C} = \prod_{j \in J}\dmindilog{a_j}{b_j}^{d(a_j,b_j)f_j(m,n)}}$ be the unstable part of $C$. Let $\mindilog{x}{y}$ be the greatest dilogarithm element appearing in $\hat{C}$. Namely, $\mindilog{x}{y} \geq \mindilog{a_j}{b_j}$ holds for any $j \in J$. Now, we assume the following conditions:
\begin{itemize}
\item[a.] $\deg\mindilog{a_j}{b_j} \leq l+1$ for any $j \in J$.
\item[b.] If $\deg\mindilog{a_j}{b_j} \leq l$, then $f_j(m,n)$ can be expressed as a nonnegative PBC in $m$ and $n$.
\item[c.] $x \neq 0$ or $b_j \neq 0$ for any $\mindilog{a_j}{b_j} \neq \mindilog{x}{y}$.
\end{itemize}
Let $\hat{J}=J\backslash\{ j \in J \mid \mindilog{a_j}{b_j} = \mindilog{x}{y}\}$. Then, by applying Algorithm \ref{oa} below, we obtain the products 
\begin{equation}
\hat{C}'=\left(\prod_{j \in J'} \dilog{a'_j}{b'_j}^{d(a'_j,b'_j)f'_j(m,n)}\right) \dilog{x}{y}^{d(x,y)g(m,n)}
\end{equation}
and $C'=\hat{C}'{C}^{\mathrm{stab}}$ which satisfy the following conditions.
\begin{itemize}
\item[A.] $\hat{C} \equiv \hat{C}' \mod G^{> l+1}$. It implies that $C \equiv C' \mod G^{> l+1}$.
\item[B.] $\deg\mindilog{a'_j}{b'_j} \leq l+1$ and $\mindilog{a'_j}{b'_j} < \mindilog{x}{y}$. In particular, the stable part of $C'$ includes $\mindilog{x}{y}^{d(x,y)g(m,n)}{C}^{\mathrm{stab}}$.
\item[C.] The index set $\hat{J}$ can be embedded in $J'$ as an ordered set, and it satisfies the following properties:
\begin{itemize} 
\item For any $j \in \hat{J} \subset J'$, it holds that $\mindilog{a'_j}{b'_j}^{d(a'_j,b'_j)f'_j(m,n)}=\mindilog{a_j}{b_j}^{d(a_j,b_j)f_j(m,n)}$.
\item For any $j \in J'\backslash \hat{J}$, $f'_j(m,n)$ is expressed as a nonnegative PBC in $m$ and $n$.
\end{itemize}
\item[D.] Every dilogarithm element $\mindilog{a'_j}{b'_j}$ and the index set $J'$ are independent of $m$ and $n$.
\item[E.] $g(m,n)=\displaystyle{\sum_{\substack{j \in J,\\ \dmindilog{a_j}{b_j}=\dmindilog{x}{y}}} f_j(m,n)}$.
\end{itemize}
\end{lem}
\begin{algo}\label{oa}\noindent
\begin{itemize}
\item[\textbf{Step 0}.] Let $D=\hat{C}\mindilog{x}{y}^{d(x,y)g(m,n)}$ with $g(m,n)=0$.
\item[\textbf{Step 1}.] Let $\mindilog{x}{y}^{d(x,y)f(x,y)}$ be the second factor of $D$ from the right hand side such that its dilogarithm element is $\mindilog{x}{y}$. Namely, every factor $\mindilog{z}{w}^{d(z,w)f'(m,n)}$ on the right side of this $\mindilog{x}{y}^{d(x,y)f(x,y)}$ satisfies $\mindilog{z}{w} \neq \mindilog{x}{y}$ except for the factor $\mindilog{x}{y}^{d(x,y)g(m,n)}$ on the right end. Let $\mindilog{a}{b}^{d(a,b)f'(m,n)}$ be the right adjacent factor of $\mindilog{x}{y}^{d(x,y)f(x,y)}$.
\begin{itemize}
\item[\textbf{Step 1.1}.] If $\mindilog{a}{b}^{d(a,b)f'(m,n)} \neq \mindilog{x}{y}^{d(x,y)g(m,n)}$, let $F = \left(\begin{smallmatrix} a & x\\ b & y\\\end{smallmatrix}\right)$. By the assumptions, $(a,b)<(x,y)$, that is, $|F|=ay-bx \geq 0$ holds. Moreover, we have $F \neq I$ since $x \neq 0$ or $b \neq 0$. Proceed to (i) or (ii).
\begin{itemize}
\item[\textbf{(i)}.] If $|F|=0$, replace $\mindilog{x}{y}^{d(x,y)}\mindilog{a}{b}^{f'(m,n)}$ with $\mindilog{a}{b}^{d(a,b)f'(m,n)} \mindilog{x}{y}^{d(x,y)f(m,n)}$.
(Apply (\ref{exchange}).) Back to Step 1.
\item[\textbf{(ii)}.] If $|F| > 0$, replace $\mindilog{x}{y}^{d(x,y)f(m,n)}\mindilog{a}{b}^{d(a,b)f'(m,n)}$ with
\begin{equation}\label{eq: X}
\begin{aligned}
\dilog{a}{b}^{d(a,b)f'(m,n)}\biggl\{\orderedprod_{\begin{smallmatrix}p,q \in \mathbb{Z}_{\geq 1},\\ \deg(p,q) \leq l, \\ \deg F\left(\begin{smallmatrix}p\\q\end{smallmatrix}\right) \leq l+1\end{smallmatrix}}\left(F\dilog{p}{q}\right)^{v_{(p,q)}(m,n)}\biggr\}\dilog{x}{y}^{d(x,y)f(m,n)}
\end{aligned}
\end{equation}
where
\begin{equation}
v_{(p,q)}(m,n)=u_{({p},{q})}\left(d(a,b)|F|f'(m,n),d(x,y)|F|f(m,n)\right).
\end{equation}
Back to Step~1.
\end{itemize}
\item[\textbf{Step 1.2}.] If $\mindilog{a}{b}^{d(a,b)f'(m,n)}=\mindilog{x}{y}^{d(x,y)g(m,n)}$, replace $\mindilog{x}{y}^{d(x,y)f(m,n)} \mindilog{x}{y}^{d(x,y)g(m,n)}$ with $\mindilog{x}{y}^{d(x,y)(g(m,n)+f(m,n))}$, and we set $g(m,n)$ as $f(m,n)+g(m,n)$. If there exist a factor $\mindilog{z}{w}^{d(z,w)h(z,w)}$ such that $\mindilog{z}{w} = \mindilog{x}{y}$ and it is not at the right end, back to Step~1. Otherwise, proceed to Step~2.
\end{itemize}
\item[\textbf{Step 2}.] If every dilogarithm element appearing in $D$ is not $\mindilog{x}{y}$ except for the one at the right end, let $\hat{C}'=D$, and finish this algorithm.
\end{itemize}
\end{algo}
The replacement in Step~1.1~(ii) follows from the following relation and Proposition~\ref{prop: proceed lemma}.
\begin{equation}\label{eq: reason of step1.1(ii)}
\begin{aligned}
&\ \dilog{0}{1}^{d(x,y)|F|f_{j_0}(m,n)}\dilog{1}{0}^{d(a,b)|F|f'(m,n)}\\
\equiv&\ \dilog{1}{0}^{d(a,b)|F|f'(m,n)}\biggl\{\orderedprod_{\begin{smallmatrix}p,q \in \mathbb{Z}_{\geq 1},\\ \textup{deg}(p,q) \leq l\end{smallmatrix}}\dilog{p}{q}^{v_{(p,q)}(m,n)}\biggr\} \dilog{0}{1}^{d(x,y)|F|f_{j_0}(m,n)}\\
&\hspace{230pt}{\mod G^{> l}.}
\end{aligned}
\end{equation}
Roughly speaking, we change an anti-ordered pair $\mindilog{x}{y}^{d(x,y)f(m,n)}\mindilog{a}{b}^{d(a,b)f'(m,n)}$ to the strongly ordered product expression step by step, and we push $\mindilog{x}{y}^{d(x,y)f(m,n)}$ out to the right end. By Proposition~\ref{prop: proceed lemma}, this operation uses the information of  the strongly ordered product expression of $\mindilog{0}{1}^{n}\mindilog{1}{0}^m$ in $G^{\leq l}$. In particular, we do not use the data $u_{(p,q)}(m,n)$ for $p+q=l+1$.
\begin{prop}\label{prop: finish ordering algorithm}
On the assumptions of Lemma~\ref{ordering algorithm'}, Algorithm~\ref{oa} never fails, and finishes finitely many times.
\end{prop}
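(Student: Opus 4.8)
The plan is to prove two things: that at every pass through the loop exactly one branch (Step~1, Step~2.1, or Step~2.2~(i)/(ii)) applies and the relation it invokes is legitimate, so that the algorithm \emph{never fails}; and that a nonnegative‑integer potential strictly decreases at each loop, so that it \emph{terminates}. Both rest on a single invariant maintained throughout the run: apart from the accumulated rightmost factor $\mindilog{x}{y}^{g(m,n)}$, every factor of the current word $D$ is $\leq\mindilog{x}{y}$ in the total order on $N^{+}$. This holds for $\hat C$ by hypothesis, and the only factors ever created are those inserted in Step~2.2~(ii), of the form $F\mindilog{p}{q}$ with $p,q\geq1$; since $|F|>0$ in that branch and $\binom{1}{0}<\binom{p}{q}<\binom{0}{1}$, Lemma~\ref{action for pentagon relation}~(c) gives $\mindilog{a}{b}=F\mindilog{1}{0}<F\mindilog{p}{q}<F\mindilog{0}{1}=\mindilog{x}{y}$, so these new factors are in fact $<\mindilog{x}{y}$.

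For the first claim: if we are not at Step~1, then $\hat C$ still contains a factor $\mindilog{x}{y}$, so the index $j_0$ of Step~2 exists; by the invariant and the maximality of $j_0$, the factor just to its right is either the accumulated $\mindilog{x}{y}^{g(m,n)}$ (so Step~2.1 applies) or some $\mindilog{a}{b}^{f'(m,n)}$ with $(a,b)<(x,y)$. In the latter case $|F|=ay-bx\geq0$ necessarily, since $(a,b)<(x,y)$ means either $\{(a,b),(x,y)\}<0$, i.e.\ $ay-bx>0$, or $(x,y)=k(a,b)$ with $k>1$, i.e.\ $ay-bx=0$; thus we are always in branch~(i) with $|F|=0$, where (\ref{exchange}) applies because $\{(x,y),(a,b)\}=|F|=0$, or in branch~(ii) with $|F|>0$, where (\ref{eq: X}) is legitimate via Corollary~\ref{cor: proceed lemma}, and no intermediate case occurs. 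Moreover $F\neq I$: were $F=I$ we would have $x=0$ and $(a,b)=(1,0)$, but $x=0$ forces $b\neq0$ by hypothesis~(d) of Lemma~\ref{ordering algorithm'}, a contradiction. Hence every step is well-defined.

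For termination I would use the lexicographically ordered pair $(k,r)$, where $k$ is the number of factors of $D$ equal to $\mindilog{x}{y}$ other than the accumulated one, and $r$ is the number of factors of $D$ lying strictly between the last such $\mindilog{x}{y}$ and the accumulated $\mindilog{x}{y}^{g(m,n)}$. Step~2.1 fuses two adjacent occurrences of $\mindilog{x}{y}$ into one, so $k\mapsto k-1$. Step~2.2, in either branch, moves $\mindilog{x}{y}^{f_{j_0}}$ one position to the right past $\mindilog{a}{b}^{f'}$; crucially, in branch~(ii) the inserted block lies strictly to the \emph{left} of that occurrence of $\mindilog{x}{y}$, the rewrite (\ref{eq: X}) having the shape $\mindilog{x}{y}^{f_{j_0}}\mindilog{a}{b}^{f'}\mapsto\mindilog{a}{b}^{f'}\,(\cdots)\,\mindilog{x}{y}^{f_{j_0}}$ with $(\cdots)$ a product of factors $F\mindilog{p}{q}$, so $k$ is unchanged while $r\mapsto r-1$. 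When $r=0$ the last non-accumulated $\mindilog{x}{y}$ sits next to the accumulated one, so Step~2.1 fires on the next pass; when $k=0$, Step~1 halts. Since each Step~2.2 inserts at most a fixed finite number of factors (depending only on $l$) and $k$ never grows, the total factor count of $D$ stays bounded, so $r$ is finite at the start of each maximal run of consecutive Step~2.2's; each such run is therefore finite, and there are finitely many of them (at most the initial value of $k$).

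The genuinely delicate point --- the one I would expand rather than treat schematically --- is the position bookkeeping underlying the potential $(k,r)$: one must verify that the factors introduced by the pentagon‑type rewrite in Step~2.2~(ii) are strictly below $\mindilog{x}{y}$ in the total order (so they can never become new occurrences of $\mindilog{x}{y}$) and that they are inserted strictly to the left of the migrating $\mindilog{x}{y}^{f_{j_0}}$. Granting this, the rest --- exhaustiveness of the branches, $|F|\geq0$, $F\neq I$ --- is a short order‑theoretic verification from the hypotheses of Lemma~\ref{ordering algorithm'}.
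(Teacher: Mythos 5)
Your combinatorial analysis---exhaustiveness of the branches, $|F|=ay-bx\geq 0$ from $(a,b)<(x,y)$, $F\neq I$ from hypothesis d, the invariant that inserted factors $F\mindilog{p}{q}$ lie strictly between $\mindilog{a}{b}$ and $\mindilog{x}{y}$, and termination via the pair $(k,r)$---is correct and is essentially the paper's own termination argument (the paper compresses it to ``the number of factors to the right of $\mindilog{x}{y}^{f_{j_0}}$ decreases by $1$'' plus finiteness of the word, which both arguments need and which comes from condition e). But you have omitted what the paper treats as the main content of ``never fails,'' and it is a genuine gap rather than a routine verification. In Step~2.2~(ii) the algorithm writes down the exponent $u_{(p,q)}\bigl(|F|f'(m,n),|F|f_{j_0}(m,n)\bigr)$. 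The function $u_{(p,q)}$ is only defined on $\mathbb{Z}_{\geq 0}^2$, while $f'$ and $f_{j_0}$ are a priori rational-valued, so one must prove that $|F|f'(m,n)$ and $|F|f_{j_0}(m,n)$ are nonnegative integers for every $m,n\in\mathbb{Z}_{\geq 0}$; the paper's Claim~1 does this from hypotheses b, c of Lemma~\ref{ordering algorithm'} via $|F|f'=\tfrac{|F|}{\gcd(a,b)}\cdot\bigl(\gcd(a,b)f'\bigr)$ with $\tfrac{|F|}{\gcd(a,b)}=\tfrac{a}{\gcd(a,b)}y-\tfrac{b}{\gcd(a,b)}x\in\mathbb{Z}$. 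Without this, the rewrite (\ref{eq: X}) is not even a well-formed expression, i.e.\ the algorithm can ``fail'' in exactly the sense the proposition excludes.

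Second, and more seriously, the algorithm loops: after one application of Step~2.2~(ii) the new word must again satisfy hypotheses a--e of Lemma~\ref{ordering algorithm'}, or the next pass fails for the reason above. In particular the freshly created exponents $\tfrac{\gcd(pa+qx,\,pb+qy)}{|F|}\,u_{(p,q)}\bigl(|F|f',|F|f_{j_0}\bigr)$ must again be nonnegative PBCs (resp.\ integer-valued in degree $l+1$). This is the paper's Claim~2, and it is not order-theoretic: it requires closure of nonnegative PBCs under composition (Proposition~\ref{composition preserve filling}), the integrality supplied by Theorem~\ref{ordering lemma} together with Lemma~\ref{nnPBClem}, and---crucially---the standing induction hypothesis that Proposition~\ref{main prop1} already holds in degrees $\leq l$, since the paper proves the present statement only as the implication $(\ref{main prop1})_l\Rightarrow(\ref{prop: finish ordering algorithm})_l$ inside a joint induction. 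Your proof never engages with the exponents at all, so this invariant is unverified and the ``never fails'' half of the claim is incomplete; the sentence ``the rest is a short order-theoretic verification'' is where the real work of the paper's proof lives.
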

The number of $(x,y) \in N^{+}$ satisfying $\deg(x,y) \leq l+1$ is finite. Thus, by applying Algoithm~\ref{oa} repeatedly, we obtain the strongly ordered product expression of $C$ in $G^{\leq l+1}$ finitely many times. Moreover, by Lemma~\ref{ordering algorithm'}~C and D, the exponent of $\mindilog{a}{b}$ in the strongly ordered product expression of $C$ is
\begin{equation}
\sum_{j;\mindilog{a_{j}}{b_j}=\mindilog{a}{b}\ \mathrm{in}\ C} d(a,b)f_j(m,n) + d(a,b)f(m,n)
\end{equation}
for some nonnegative PBC $f(m,n)$.
\par
Based on this algorithm, we give a method to calculate $u_{(a,b)}(m,n)$. We can see the example of this method in Section~\ref{sec: examples}.
\begin{method}\label{method: obtain the explicit forms}
Let $l \in \mathbb{Z}_{\geq 2}$, and suppose that $\tilde{u}_{(x,y)}(m,n)$ is a nonnegative PBC for any $(x,y) \in N^{+}$ with $\deg(x,y) \leq l$. By applying Algorithm~\ref{oa} to the following products $C_{(m,1)}$ and $C_{(m,n)}$, we may calculate $u_{(a,b)}(m,n)$ with $\deg(a,b)=l+1$.\par
First, $C_{(m,1)}$ is defined as follows:
\begin{equation}\label{important relation n=1}
\begin{aligned}
\dilog{0}{1} \dilog{1}{0}^{m+1} &= \left(\dilog{0}{1}\dilog{1}{0}\right)\dilog{1}{0}^m\\
&\overset{(\ref{eq-pentagon relation})}{=}\dilog{1}{0} \dilog{1}{1} \left(\dilog{0}{1}\dilog{1}{0}^m\right)\\
&\overset{(\ref{eq: definition of u(a,b)})}{\equiv}{{\dilog{1}{0} \dilog{1}{1} \dilog{1}{0}^m\biggl(\orderedprod_{\begin{smallmatrix} (x,y) \in N^{+},\\ x+y \leq l+1, \\  x,y \geq 1,\\ \end{smallmatrix}} \dilog{x}{y}^{d(x,y)\tilde{u}_{(x,y)}(m,1)}\biggl)}}\dilog{0}{1} \\
&\hspace{170pt}\mod G^{>l+1}.
\end{aligned}
\end{equation}
Let
\begin{equation}\label{eq: C of (m,1)}
C_{(m,1)}=\dilog{1}{0} \dilog{1}{1} \dilog{1}{0}^m \biggl(\orderedprod_{\begin{smallmatrix} (x,y) \in N^{+},\\x+y \leq l+1, \\ x,y \geq 1 \end{smallmatrix}} \dilog{x}{y}^{d(x,y)\tilde{u}_{(x,y)}(m,1)}\biggl).
\end{equation}
Next, $C_{(m,n)}$ is defined as follows:
\begin{equation}\label{important relation}
\begin{aligned}
&\ \dilog{0}{1}^{n+1} \dilog{1}{0}^m = \dilog{0}{1} \left(\dilog{0}{1}^n \dilog{1}{0}^m\right)\\
\overset{(\ref{eq: definition of u(a,b)})}{\equiv}&\ \left(\dilog{0}{1} \dilog{1}{0}^m\right) \biggl(\orderedprod_{\begin{smallmatrix} (x,y) \in N^{+},\\ x+y \leq l+1,\\ x,y \geq 1 \end{smallmatrix}} \dilog{x}{y}^{d(x,y)\tilde{u}_{(x,y)}(m,n)}\biggr)\dilog{0}{1}^n\\
\overset{(\ref{eq: definition of u(a,b)})}{\equiv}&\  \dilog{1}{0}^m \biggl(\orderedprod_{\begin{smallmatrix} (z,w) \in N^{+},\\ z+w \leq l+1 \\ z,w \geq 1 \end{smallmatrix}} \dilog{z}{w}^{d(z,w)\tilde{u}_{(z,w)}(m,1)}\biggr)\dilog{0}{1}\\
&\qquad\times\biggl(\orderedprod_{\begin{smallmatrix} (x,y) \in N^{+},\\ x+y \leq l+1\\ x,y \geq 1 \end{smallmatrix}} \dilog{x}{y}^{d(x,y)\tilde{u}_{(x,y)}(m,n)}\biggr)\dilog{0}{1}^n\mod G^{>l+1}.\\
\end{aligned}
\end{equation}
Then, $C_{(m,n)}$ is defined by
\begin{equation}\label{eq: C of (m,n)}
\biggl(\orderedprod_{\begin{smallmatrix} (z,w) \in N^{+},\\ z+w \leq l+1 \\ z,w \geq 1 \end{smallmatrix}} \dilog{z}{w}^{d(z,w)\tilde{u}_{(z,w)}(m,1)}\biggr)\dilog{0}{1}\biggl(\orderedprod_{\begin{smallmatrix}(x,y) \in N^{+},\\x+y \leq l+1\\ x,y \geq 1 \end{smallmatrix}} \dilog{x}{y}^{d(x,y)\tilde{u}_{(x,y)}(m,n)}\biggr)\dilog{0}{1}^n.
\end{equation}
By Theorem~\ref{ordering lemma} and Theorem~\ref{thm: exponents PBC}, $C_{(m,1)}$ and $C_{(m,n)}$ satisfy the assumptions of Lemma~\ref{ordering algorithm'}. Let $(a,b) \in N^{+}$ with $3 \leq a+b=l+1$. Then, the factor $\mindilog{a}{b}^{*}$ in the initial $C_{(m,1)}$ is only $\mindilog{a}{b}^{d(a,b)\tilde{u}_{(a,b)}(m,1)}=\mindilog{a}{b}^{u_{(a,b)}(m,n)}$, and the ones in the initial $C_{(m,n)}$ are only $\mindilog{a}{b}^{d(a,b)\tilde{u}_{(a,b)}(m,1)}=\mindilog{a}{b}^{u_{(a,b)}(m,1)}$ and $\mindilog{a}{b}^{d(a,b)\tilde{u}_{(a,b)}(m,n)}=\mindilog{a}{b}^{u_{(a,b)}(m,n)}$. The method to calculate $u_{(a,b)}(m,n)$ is as follows:
\begin{itemize}
\item[1.] Apply Algorithm~\ref{oa} to $C_{(m,1)}$ repeatedly until $C_{(m,1)}$ becomes the strongly ordered product.
\item[2.] After the operation 1, the exponent of $\mindilog{a}{b}$ is $u_{(a,b)}(m,1)+d(a,b)f(m)$ for some nonnegative PBC $f(m)$.
Thus, we obtain the relation
\begin{equation}\label{eq: method 2 for n=1}
\begin{aligned}
&u_{(a,b)}(m+1,1)=u_{(a,b)}(m,1)+d(a,b)f(m)\\
\Leftrightarrow\ &u_{(a.b)}(m+1,1)-u_{(a,b)}(m,1)=d(a,b)f(m),
\end{aligned}
\end{equation}
and it implies that
\begin{equation}\label{eq: reccurence formula for n=1}
\begin{aligned}
u_{(a,b)}(m,1)&=u_{(a,b)}(0,1)+\sum_{j=0}^{m-1} \{u_{(a,b)}(j+1,1)-u_{(a,b)}(j,1)\}\\
&\overset{(\ref{eq: method 2 for n=1})}=u_{(a,b)}(0,1)+d(a,b)\sum_{j=0}^{m-1} f(j)\overset{(\ref{eq: exponents of n=0})}{=} d(a,b)\sum_{j=0}^{m-1} f(j).
\end{aligned}
\end{equation}
Note that this $f(m)$ is determined by the data of $u_{(x,y)}(m,n)$ for $x+y \leq l$. Thus, we may find the explicit form of $u_{(a,b)}(m,1)$. 
\item[3.] Apply Algorithm~\ref{oa} to $C_{(m,n)}$ repeatedly until $C_{(m,n)}$ becomes the strongly ordered product.
\item[4.] After the operation 3, the exponent of $\mindilog{a}{b}$ is $u_{(a,b)}(m,n)+u_{(a,b)}(m,1)+d(a,b)f'(m,n)$ for some nonnegative PBC $d(a,b)f'(m,n)$. By a similar argument, we obtain the explicit form
\begin{equation}\label{eq: reccurence formula for m,n}
\begin{aligned}
u_{(a,b)}(m,n)&=\sum_{j=0}^{n-1}u_{(a,b)}(m,1)+d(a,b)\sum_{j=0}^{n-1}f'(m,j)\\
&=u_{(a,b)}(m,1)n+d(a,b)\sum_{j=0}^{n-1}f'(m,j).
\end{aligned}
\end{equation}
\end{itemize}
\end{method}
To summarize, the following proposition holds.
\begin{prop}\label{prop: recurrence}
Let $l \in \mathbb{Z}_{\geq 1}$, and let $(a,b) \in N^{+}$ with $\deg(a,b)=l+1$. Let $C_{(m,1)}$ and $C_{(m,n)}$ be the products which is defined by (\ref{eq: C of (m,1)}) and (\ref{eq: C of (m,n)}), respectively. The following two statements hold.\\
\textup{(a)}\ By applying Algorithm~\ref{oa} to $C_{(m,1)}$ repeatedly, we obtain the recurrence relation:
\begin{equation}
u_{(a,b)}(m+1,1)=u_{(a,b)}(m,1)+d(a,b)f(m),
\end{equation}
where $f(m)$ is some nonnegative PBC in $m$.\\
\textup{(b)}\ By applying Algorithm~\ref{oa} to $C_{(m,n)}$ repeatedly, we obtain the recurrence relation:
\begin{equation}
u_{(a,b)}(m,n+1)=u_{(a,b)}(m,n)+u_{(a,b)}(m,1)+d(a,b)f'(m,n),
\end{equation}
where $f'(m,n)$ is some nonnegative PBC in $m$ and $n$.\par
Moreover, $f(m)$ and $f'(m,n)$ are determined by the data of $u_{(x,y)}(m,n)$ with $\deg(x,y)\leq l$ as functions of $m$ and $n$.
\end{prop}
\subsection{Proof of Theorem~\ref{thm: exponents PBC}}
Here, we prove Theorem~\ref{thm: exponents PBC}, Lemma~\ref{ordering algorithm'}, and Proposition~\ref{prop: finish ordering algorithm}.
We show these claims by the induction on the degree $l$. For any $l \in \mathbb{Z}_{\geq 1}$, we define the statements $(\ref{thm: exponents PBC})_{l}$, $(\ref{ordering algorithm'})_l$, and $(\ref{prop: finish ordering algorithm})_{l}$ as follows:
\begin{itemize}
\item[$(\ref{thm: exponents PBC})_{l}$] Theorem~\ref{thm: exponents PBC} holds for $\deg(a,b) \leq l$.
\item[$(\ref{ordering algorithm'})_l$] Lemma~\ref{ordering algorithm'} holds for this $l$.
\item[$(\ref{prop: finish ordering algorithm})_{l}$]  Proposition~\ref{prop: finish ordering algorithm} holds for this $l$.
\end{itemize}
The statement $(\ref{thm: exponents PBC})_{1}$ hlolds by (\ref{eq: exponents of b=0}). Thus, it suffices to show the following three statements:
\begin{equation*}
\begin{aligned}
&(\ref{thm: exponents PBC})_{l} \Rightarrow (\ref{prop: finish ordering algorithm})_{l}, \quad (\ref{thm: exponents PBC})_{l}, (\ref{prop: finish ordering algorithm})_{l} \Rightarrow (\ref{ordering algorithm'})_{l},\\
&(\ref{thm: exponents PBC})_{l}, (\ref{ordering algorithm'})_{l}, (\ref{prop: finish ordering algorithm})_{l} \Rightarrow (\ref{thm: exponents PBC})_{l+1}.
\end{aligned}
\end{equation*}
Let us start the proof of $(\ref{thm: exponents PBC})_{l} \Rightarrow (\ref{prop: finish ordering algorithm})_{l}$.
\begin{proof}[Proof of $(\ref{thm: exponents PBC})_{l} \Rightarrow (\ref{prop: finish ordering algorithm})_{l}$]
We prove that Algorithm~\ref{oa} never fails. There are two claims in Step~1.1~(ii).
\par
Since the replacement in Step~1.1~(ii) is derived from the equality (\ref{eq: reason of step1.1(ii)}), we should show that this equality holds for any $m,n \in \mathbb{Z}_{\geq 0}$. It suffices to show the following claim.\\
\\
\textbf{Claim~1.}\ Both $d(a,b)|F|f'(m,n)$ and $d(x,y)|F|f(m,n)$ are nonnegative integers for any $m,n \in \mathbb{Z}_{\geq 0}$.
\\
\par
Second, we should show that the assumptions of this algorithm hold after any replacement.
The assumption~a and c are obvious. Thus, we should show the assumption~b. If $\deg\mindilog{a}{b}=l+1$ or $\deg\mindilog{x}{y}=l+1$, the product (\ref{eq: reason of step1.1(ii)}) is $\mindilog{a}{b}^{d(x,y)f'(m,n)}\mindilog{x}{y}^{d(x,y)f(m,n)}$. Thus, the assumption~b holds. Hence, it suffices to show the following claim.
\\
\\
\textbf{Claim~2.}\ Assume $\deg(a,b), \deg(x,y) \leq l$. Consider the factor
\begin{equation}
\left(F\dilog{p}{q}\right)^{v_{(p,q)}(m,n)}=\left[F\begin{pmatrix} p \\ q\\ \end{pmatrix}\right]^{v_{(p,q)}(m,n)/|F|},
\end{equation}
where $\deg(p,q) \leq l$, $F=\left(\begin{smallmatrix} a & x \\ b & y\\ \end{smallmatrix}\right)$ and
\begin{equation}
v_{(p,q)}(m,n)=u_{({p},{q})}\left(d(a,b)|F|f'(m,n),d(x,y)|F|f(m,n)\right).
\end{equation}
Then, this exponent $\frac{v_{(p,q)}(m,n)}{|F|}$ can be expressed as $d(F\mpmatrix{p}{q})h(m,n)$ for some nonnegative PBC $h(m,n)$.
\\
\begin{proof}[Proof of Claim 1.]
We show $d(a,b)|F|f'(m,n) \in \mathbb{Z}_{\geq 0}$. By the assumptions, we have $f'(m,n) \in \mathbb{Z}$ for any $m,n \in \mathbb{Z}_{\geq 0}$. Now, we have the equality
\begin{equation}
d(a,b)|F|=\frac{a}{\gcd(a,b)}y-\frac{b}{\gcd(a,b)}x \in \mathbb{Z}_{\geq 0}.
\end{equation}
Since $d(a,b)|F|, f'(m,n) \in \mathbb{Z}_{\geq 0}$, we have $d(a,b)|F|f'(m,n) \in \mathbb{Z}_{\geq 0}$ for any $m,n \in \mathbb{Z}_{\geq 0}$.
\end{proof}
\begin{proof}[Proof of Claim 2]
We show that
\begin{equation}\label{eq: object to prove algo}
\begin{aligned}
&\ \frac{1}{d\left(F\mpmatrix{p}{q}\right)}\frac{v_{(p,q)}(m,n)}{|F|}\\
=&\ \frac{1}{d\left(F\mpmatrix{p}{q}\right)}\frac{d(p,q)\tilde{u}_{({p},{q})}\left(d(a,b)|F|f'(m,n),d(x,y)|F|f(m,n)\right)}{|F|}
\end{aligned}
\end{equation}
is a nonnegative PBC. First, we show that (\ref{eq: object to prove algo}) is expreesed as $\sum_{0 \leq k,l} \gamma_{k,l} \binom{m}{k}\binom{n}{l}$ with $\gamma_{k,l} \in \mathbb{Q}_{\geq 0}$. By the assumption~b and Claim~1, both $d(a,b)|F|f'(m,n)$ and $d(x,y)|F|f(m,n)$ are nonegative PBCs. By the assumption~$(\ref{thm: exponents PBC})_{l}$, $\tilde{u}_{(p,q)}(m,n)$ is expressed as a nonnegative PBC. Thus, by Proposition~\ref{composition preserve filling}, we may express
\begin{equation}
\begin{aligned}
&\ \tilde{u}_{(p,q)}\left(d(a,b)|F|f'(m,n),d(x,y)|F|f(m,n)\right)\\
=&\ \sum_{0 \leq k,l} \alpha_{k,l}\binom{m}{k}\binom{n}{l}
\end{aligned}
\end{equation}
for some nonnegative integers $\alpha_{k,l} \in \mathbb{Z}_{\geq 0}$. We have
\begin{equation}
\begin{aligned}
&\ \frac{1}{d\left(F\mpmatrix{p}{q}\right)}\frac{v_{(p,q)}(m,n)}{|F|}\\
=&\ \frac{d(p,q)}{|F|d(F\mpmatrix{p}{q})} \tilde{u}_{(p,q)}\left(d(a,b)|F|f'(m,n),d(x,y)|F|f(m,n)\right)\\
=&\ \frac{d(p,q)}{|F|d(F\mpmatrix{p}{q})} \sum_{0 \leq k,l} \alpha_{k,l} \binom{m}{k}\binom{n}{l}.
\end{aligned}
\end{equation}
Let $\gamma_{k,l} = \frac{d(p,q)}{|F|d(F\mpmatrix{p}{q})}\alpha_{k,l} \in \mathbb{Q}_{\geq 0}$. Then, we have $\frac{1}{d\left(F\mpmatrix{p}{q}\right)}\frac{v_{(p,q)}(m,n)}{|F|}=\sum_{0 \leq k,l} \gamma_{k,l} \binom{m}{k}\binom{n}{l}$. Thus, it suffices to show that $\frac{1}{d\left(F\mpmatrix{p}{q}\right)}\frac{v_{(p,q)}(m,n)}{|F|}$ is a PBC. Recall that $\frac{v_{(p,q)}(m,n)}{|F|}$ is the exponent of $\left[F\binom{p}{q}\right]$ in the strongly ordered product expression of (\ref{eq: X}). Consider Theorem~\ref{ordering lemma} for $\delta_1=\delta_2=1$. By the assumptions b and c, $C^{\textup{in}}=\mindilog{x}{y}^{d(x,y)(m,n)}\mindilog{a}{b}^{d(a,b)f'(m,n)}$ satisfies the assumption of Theorem~\ref{ordering lemma}. By (\ref{eq: X}) and Theorem~\ref{ordering lemma},
\begin{equation}
\frac{v_{(p,q)}(m,n)}{|F|} \in d\left(F\mpmatrix{p}{q}\right)\mathbb{Z}\ \Leftrightarrow\ \frac{1}{d(F\mpmatrix{p}{q})}\frac{v_{(p,q)}(m,n)}{|F|} \in \mathbb{Z}
\end{equation}
holds for any $m,n \in \mathbb{Z}_{\geq 0}$. So, by Lemma~\ref{nnPBClem}, $\frac{1}{d(F\mpmatrix{p}{q})}\frac{v_{(p,q)}(m,n)}{|F|}$ is a PBC. This completes the proof.
\end{proof}
Next, we show that Algorithm~\ref{oa} finishes in a finite number of steps. By applying Step~1.1~(i) and (ii), the number of factors on the right side of $\mindilog{x}{y}^{d(x,y)f(m,n)}$ decreases by 1. Moreover, the product is always finite for each operation. Thus, this algorithm finishes finitely many times.
\end{proof}
Next, we show $(\ref{thm: exponents PBC})_{l}, (\ref{prop: finish ordering algorithm})_{l} \Rightarrow (\ref{ordering algorithm'})_{l}$.
\begin{proof}[Proof of $(\ref{thm: exponents PBC})_{l}, (\ref{prop: finish ordering algorithm})_{l} \Rightarrow (\ref{ordering algorithm'})_{l}$]
The statements A, B, and D are shown by considering each step in Algorithm~\ref{oa}. Thus, we need to prove C and E.\\
C: In Step~1.1~(i) and Step~1.1~(ii), the anti-ordered pair $\mindilog{x}{y}^{d(x,y)f(m,n)}\mindilog{a}{b}^{d(a,b)f'(m,n)}$ is replaced with the strongly ordered product not changing $\mindilog{a}{b}^{d(a,b)f'(m,n)}$. Thus, every factor $\mindilog{a_j}{b_j}^{d(a_j,b_j)f_{j}(m,n)}$ ($(a_j,b_j) \neq (x,y)$) in $\hat{C}$ exists in $D$. Moreover, for any factors $\mindilog{a_i}{b_i}^{d(a_i,b_i)f_i(m,n)}$ and $\mindilog{a_{j}}{b_j}^{d(a_j,b_j)f_j(m,n)}$ such that $(a_i,b_i),(a_jb_j)\neq (x,y)$, if $\mindilog{a_i}{b_i}^{d(a_i,b_i)f_i(m,n)}$ appears before $\mindilog{a_{j}}{b_j}^{d(a_j,b_j)f_j(m,n)}$ in $\hat{C}$, then $\mindilog{a_i}{b_i}^{d(a_i,b_i)f_i(m,n)}$ appears before $\mindilog{a_{j}}{b_j}^{d(a_j,b_j)f_j(m,n)}$ in $D$. Furthermore, this algorithm does not finish until all $\mindilog{x}{y}^{*}$ disappear in $D$ except for the right end. Thus, the index set $\hat{J}$ can be embedded in $J'$ preserving its order, and for any $j \in \hat{J}$, $\mindilog{a'_j}{b'_j}^{d(a'_j,b'_j)f'_j(m,n)}=\mindilog{a_j}{b_j}^{d(a_j,b_j)f_j(m,n)}$ holds. Let $j \in J'\backslash\hat{J}$. Then, by the proof of Claim~2, $f'_j(m,n)$ is expressed as a nonnegative PBC.
\\
E: In Step~1.1~(i) and (ii), every factor $\mindilog{x}{y}^{d(x,y)f(m,n)}$ in $D$ moves to the last $\mindilog{x}{y}^{d(x,y)g(m,n)}$ without changing its exponent, and new factors $\mindilog{x}{y}^{*}$ are not produced. Thus, E holds.
\end{proof}
Last, we prove $(\ref{thm: exponents PBC})_{l}, (\ref{ordering algorithm'})_{l}, (\ref{prop: finish ordering algorithm})_{l} \Rightarrow (\ref{thm: exponents PBC})_{l+1}$.
\begin{proof}[Proof of $(\ref{thm: exponents PBC})_{l}, (\ref{ordering algorithm'})_{l}, (\ref{prop: finish ordering algorithm})_{l} \Rightarrow (\ref{thm: exponents PBC})_{l+1}$]
This is immediately shown by (\ref{eq: reccurence formula for n=1}) and (\ref{eq: reccurence formula for m,n}). Note that, for any PBC $f(m,n)=\sum_{0\leq k,l} \alpha_{k,l} \binom{m}{k}\binom{n}{l}$ in $m$ and $n$, $\sum_{j=0}^{n-1} f(m,j)$ is also expressed as a PBC as follows:
\begin{equation}
\sum_{j=0}^{n-1} f(m,j)=\sum_{0\leq k,l} \alpha_{k,l} \binom{m}{k} \sum_{j=0}^{n-1} \binom{j}{l} \overset{(\ref{sum of BCs})}= \sum_{0\leq k,l} \alpha_{k,l} \binom{m}{k}\binom{n}{l+1}.
\end{equation}
\end{proof}

\section{Examples in lower degrees}\label{sec: examples}
In Section~\ref{Sec: Construction}, we introduced a method to calculate $u_{(a,b)}(m,n)$ (Method~\ref{method: obtain the explicit forms}). In this section, we see some examples.
\begin{ex}\label{ex: explicit forms}
By (\ref{eq: u_(1,1)(m,n)}), we obtain
\begin{equation}\label{eq: mod 2}
\dilog{0}{1}^{n}\dilog{1}{0}^{m} \equiv \dilog{1}{0}^{m}\dilog{1}{1}^{mn}\dilog{0}{1}^{n} \mod G^{>2}.
\end{equation}
For later, we write underlines on anti-ordered pairs where we apply the retation. First, consider $a+b=3$. Let $u_{m,n}=u_{({2},{1})}(m,n)$ and $v_{m,n}=u_{({1},{2})}(m,n)$, namely,
\begin{equation}
\dilog{0}{1}^n \dilog{1}{0}^m \equiv \dilog{1}{0}^m \dilog{2}{1}^{u_{m,n}} \dilog{1}{1}^{mn} \dilog{1}{2}^{v_{m,n}} \dilog{0}{1}^n \quad \mod G^{>3}.
\end{equation}
Then, for any $m \in \mathbb{Z}_{\geq 0}$, we obtain
\begin{equation}\label{eq: conc. ex. mod 3. A}
\begin{aligned}
\dilog{0}{1} \dilog{1}{0}^{m+1} 
&\overset{(\ref{important relation n=1})}{\equiv} \dilog{1}{0} \underline{\dilog{1}{1} \dilog{1}{0}^m} \dilog{2}{1}^{u_{m,1}} \dilog{1}{1}^{m} \dilog{1}{2}^{v_{m,1}} \dilog{0}{1}\\
\end{aligned}
\end{equation}
Now, we apply Step~1.1~(ii) to $\mindilog{1}{1}\mindilog{1}{0}^m$. Namely, let $F=\left(\begin{smallmatrix}1 & 1 \\ 0 & 1 \\ \end{smallmatrix}\right)$, and we view
\begin{equation}
\dilog{1}{1} \dilog{1}{0}^m = F\left(\dilog{0}{1}\dilog{1}{0}^m\right).
\end{equation}
Then, by using Proposition~\ref{prop: proceed lemma} and the result of (\ref{eq: mod 2}), it holds that
\begin{equation}
\begin{aligned}
\dilog{1}{1}\dilog{1}{0}^m &\equiv F\left(\dilog{1}{0}^m \dilog{1}{1}^{m}\dilog{0}{1}\right)\\
&\equiv \dilog{1}{0}^m \dilog{2}{1}^m \dilog{1}{1}\quad \mod G^{>3}.
\end{aligned}
\end{equation}
Putting this relation to the last line of (\ref{eq: conc. ex. mod 3. A}), we have
\begin{equation}
\dilog{0}{1} \dilog{1}{0}^{m+1} \equiv \dilog{1}{0}^{m+1} \dilog{2}{1}^m \underline{\dilog{1}{1} \dilog{2}{1}^{u_{m,1}}} \dilog{1}{1}^{m} \dilog{1}{2}^{v_{m,1}} \dilog{0}{1} \quad \mod G^{>3}.
\end{equation}
Next, we apply Step~1.1~(ii) to $\mindilog{1}{1} \mindilog{2}{1}^{u_{m,1}}$. Since $\deg((1,1)+(2,1))=\deg(3,2) > 3$, we have 
\begin{equation}\label{eq: high degree}
\dilog{1}{1}\dilog{2}{1}^{u_{m,1}}\equiv\dilog{2}{1}^{u_{m,1}}\dilog{1}{1} \mod G^{>3}.
\end{equation}
Thus, we have
\begin{equation}
\dilog{0}{1}\dilog{1}{0}^{m+1}{\equiv} \dilog{1}{0}^{m+1} \dilog{2}{1}^{m+u_{m,1}} \dilog{1}{1}^{m+1} \dilog{1}{2}^{v_{m,n}} \dilog{0}{1}\quad \mod G^{>3}.
\end{equation}
The RHS is strongly ordered. So, we have $u_{m+1,1}=m+u_{m,1}$ and $v_{m+1,1}=v_{m,1}$. Moreover, we obtain
\begin{eqnarray}
&&u_{m,1}=u_{0,1}+\sum_{k=0}^{m-1} (u_{k+1,1}-u_{k,1})=\sum_{k=0}^{m-1} \binom{k}{1} \overset{(\ref{sum of BCs})}{=} \binom{m}{2},\\
&&v_{m,1}=v_{m-1,1}=\cdots=v_{0,1}=0.
\end{eqnarray}
Next, by (\ref{important relation}), we have
\begin{equation}
\begin{aligned}
\dilog{0}{1}^{n+1}\dilog{1}{0}^m \equiv \dilog{1}{0}^m \dilog{2}{1}^{u_{m,1}}\dilog{1}{1}^{m}\dilog{1}{2}^{v_{m,1}}\underline{\dilog{0}{1} \dilog{2}{1}^{u_{m,n}}} \dilog{1}{1}^{mn} \dilog{1}{2}^{v_{m,n}} \dilog{0}{1}^{n}\\ 
\hspace{200pt}\mod G^{>3}.
\end{aligned}
\end{equation}
By applying Step~1.1~(ii) to $\mindilog{0}{1} \mindilog{2}{1}^{u_{m,n}}$, we have
\begin{equation}\label{eq: ex m,n,3}
\begin{aligned}
\dilog{0}{1}^{n+1}\dilog{1}{0}^m \equiv \dilog{1}{0}^m \dilog{2}{1}^{u_{m,1}}\dilog{1}{1}^{m}\dilog{1}{2}^{v_{m,1}}\dilog{2}{1}^{u_{m,n}} \underline{\dilog{0}{1} \dilog{1}{1}^{mn}} \dilog{1}{2}^{v_{m,n}} \dilog{0}{1}^{n}\\ 
\hspace{200pt}\mod G^{>3}.
\end{aligned}
\end{equation}
Next, apply Step~1.1~(ii) to $\mindilog{0}{1} \mindilog{1}{1}^{mn}$. Let $F=\left(\begin{smallmatrix} 1 & 0 \\ 1 & 1\\ \end{smallmatrix}\right)$, and we view
\begin{equation}
\dilog{0}{1} \dilog{1}{1}^{mn} = F\left(\dilog{0}{1}\dilog{1}{0}^{mn}\right).
\end{equation}
Thus, it holds that
\begin{equation}
\begin{aligned}
\dilog{0}{1}\dilog{1}{1}^{mn} &\equiv F\left(\dilog{1}{0}^{mn}\dilog{1}{1}^{mn}\dilog{0}{1}\right)\\
&\equiv \dilog{1}{1}^{mn} \dilog{1}{2}^{mn} \dilog{0}{1}  \mod G^{>3}.
\end{aligned}
\end{equation}
Putting the last line to (\ref{eq: ex m,n,3}), we have
\begin{equation}
\begin{aligned}
&\dilog{0}{1}^{n+1}\dilog{1}{0}^m\\
\equiv& \dilog{1}{0}^m \dilog{2}{1}^{u_{m,1}}\dilog{1}{1}^{m}\dilog{1}{2}^{v_{m,1}}\dilog{2}{1}^{u_{m,n}}\dilog{1}{1}^{mn} \dilog{1}{2}^{mn} \dilog{0}{1} \dilog{1}{2}^{v_{m,n}} \dilog{0}{1}^{n}\\
&\hspace{230pt}\mod G^{>3}.
\end{aligned}
\end{equation}
Then, by a similar discussion of (\ref{eq: high degree}), we make this product strongly ordered by only exchange relations. Thus, we have
\begin{equation}
\begin{aligned}
\dilog{0}{1}^{n+1} \dilog{1}{0}^m &\equiv \dilog{1}{0}^m \dilog{2}{1}^{u_{m,1}+u_{m,n}} \dilog{1}{1}^{m(n+1)} \dilog{1}{2}^{mn+v_{m,n}} \dilog{0}{1}^{n+1}\\
&\hspace{180pt}\mod G^{>3}.
\end{aligned}
\end{equation}
It implies that
\begin{eqnarray}
u_{m,n}=\binom{m}{2}n,\\
v_{m,n}=m\binom{n}{2},
\end{eqnarray}
and
\begin{equation}
\dilog{0}{1}^n \dilog{1}{0}^m \equiv \dilog{1}{0}^m \dilog{2}{1}^{\binom{m}{2}n} \dilog{1}{1}^{mn} \dilog{1}{2}^{m\binom{n}{2}} \dilog{0}{1}^n\quad \mod G^{>3}.
\end{equation}
For any $a,b \in \mathbb{Z}_{> 0}$, we can obtain $u_{(a,b)}(m,n)$ by the above method in principle. However, it becomes harder to complete this calculation when its degree $a+b$ is larger. The  following relation is the result of $G^{\leq 7}$.
\begin{equation}\label{example deg=7}
\begin{aligned}
&\ \dilog{0}{1}^n \dilog{1}{0}^m\\
\equiv&\ \dilog{1}{0}^{m} \dilog{6}{1}^{\bi{m}{6}\bi{n}{1}} \dilog{5}{1}^{\bi{m}{5}\bi{n}{1}} \dilog{4}{1}^{\bi{m}{4}\bi{n}{1}} \dilog{3}{1}^{\bi{m}{3}\bi{n}{1}}\\
\times&\ \dilog{5}{2}^{3\bi{m}{3}\bi{n}{2}+4\bi{m}{4}\bi{n}{1}+24\bi{m}{4}\bi{n}{2}+7\bi{m}{5}\bi{n}{1}+30\bi{m}{5}\bi{n}{2}}\\
\times&\ \dilog{2}{1}^{\bi{m}{2}\bi{n}{1}} \dilog{4}{2}^{6\bi{m}{3}\bi{n}{2}+2\bi{m}{4}\bi{n}{1}+12\bi{m}{4}\bi{n}{2}} \dilog{3}{2}^{\substack{2\bi{m}{2}\bi{n}{2}+\bi{m}{3}\bi{n}{1}\\\quad+6\bi{m}{3}\bi{n}{2}}}\\
\times&\ \dilog{4}{3}^{\substack{2\bi{m}{2}\bi{n}{2}+8\bi{m}{2}\bi{n}{3}+30\bi{m}{3}\bi{n}{2}+72\bi{m}{3}\bi{n}{3}\\\quad+\bi{m}{4}\bi{n}{1}+48\bi{m}{4}\bi{n}{2}+96\bi{m}{4}\bi{n}{3}}}\\
\times&\ \dilog{1}{1}^{\bi{m}{1}\bi{n}{1}} \dilog{2}{2}^{2\bi{m}{2}\bi{n}{2}} \dilog{3}{3}^{6\bi{m}{2}\bi{n}{3}+6\bi{m}{3}\bi{n}{2}+18\bi{m}{3}\bi{n}{3}} \\
\times&\ \dilog{3}{4}^{\substack{\bi{m}{1}\bi{n}{4}+2\bi{m}{2}\bi{n}{2}+30\bi{m}{2}\bi{n}{3}+48\bi{m}{2}\bi{n}{4}\\\quad+8\bi{m}{3}\bi{n}{2}+72\bi{m}{3}\bi{n}{3}+96\bi{m}{3}\bi{n}{4}}}\\
\times&\ \dilog{2}{3}^{\bi{m}{1}\bi{n}{3}+2\bi{m}{2}\bi{n}{2}+6\bi{m}{2}\bi{n}{3}} \dilog{1}{2}^{\bi{m}{1}\bi{n}{2}} \dilog{2}{4}^{\substack{2\bi{m}{1}\bi{n}{4}+6\bi{m}{2}\bi{n}{3}\\\quad+12\bi{m}{2}\bi{n}{4}}}\\
\times&\ \dilog{2}{5}^{4\bi{m}{1}\bi{n}{4}+7\bi{m}{1}\bi{n}{5}+3\bi{m}{2}\bi{n}{3}+24\bi{m}{2}\bi{n}{4}+30\bi{m}{2}\bi{n}{5}}\\
\times&\ \dilog{1}{3}^{\bi{m}{1}\bi{n}{3}} \dilog{1}{4}^{\bi{m}{1}\bi{n}{4}} \dilog{1}{5}^{\bi{m}{1}\bi{n}{5}} \dilog{1}{6}^{\bi{m}{1}\bi{n}{6}} \dilog{0}{1}^{n} \mod G^{>7}.
\end{aligned}
\end{equation}
\end{ex}
\begin{rem}
In \cite[Theorem 5.2]{Rei12}, when $\delta_1=\delta_2=\delta$, it was shown that there exists the wall $(\sigma(1,-1),f)_{(1,1)}$ in a CSD $\mathfrak{D}_{\delta,\delta}$, where
\begin{equation}\label{eq: result of Rei12}
\begin{aligned}
&\ f =\left(\sum_{k=0}^{\infty} \frac{1}{(\delta^2-2\delta)k+1}\binom{(\delta-1)^2k}{k}x^{k(-\delta,\delta)}\right)^{\delta}\\
=&\ \bigl(1+x^{(-\delta,\delta)}+(\delta-1)^2x^{2(-\delta,\delta)}+\frac{1}{2}(\delta-1)^2(3\delta^2-6\delta+2)x^{3(-\delta,\delta)}+\cdots\bigr)^{\delta}.
\end{aligned}
\end{equation}
This function $f \in \mathbb{Q}((x_1,x_2))$ corresponds to the element $g \in G$ by the group homomorphism induced by the following map \cite[Lemma~1.3]{GHKK18}.
\begin{equation}
\begin{aligned}
G &\to \mathbb{Q}((x_1,x_2)),\\
\Psi[n] &\mapsto (1+x^{p^*(n)})^{\delta(n)},
\end{aligned}
\end{equation}
where $\delta(n)$ is the normalization factor with respect to $(\delta,\delta)$, that is, $\delta(a,b)=\delta/\gcd(a,b)$. Then, in (\ref{example deg=7}), the wall element
\begin{equation}
g=\dilog{1}{1}^{\delta^2} \dilog{2}{2}^{2\bi{\delta}{2}^2} \dilog{3}{3}^{12\bi{\delta}{2}\bi{\delta}{3}+18\bi{\delta}{3}^2}\cdots
\end{equation}
corresponds to
\begin{equation}
\begin{aligned}
&\ (1+x^{(-\delta,\delta)})^{\delta}(1+x^{2(-\delta,\delta)})^{\frac{2}{\delta}\times2\binom{\delta}{2}^2}(1+x^{3(-\delta,\delta)})^{\frac{3}{\delta}\times\left(12\bi{\delta}{2}\bi{\delta}{3}+18\bi{\delta}{3}^2\right)}\cdots\\
=&\ \bigl(1+x^{(-\delta,\delta)}+(\delta-1)^2x^{2(-\delta,\delta)}+\frac{1}{2}(\delta-1)^2(3\delta^2-6\delta+2)x^{3(-\delta,\delta)}+\cdots\bigr)^{\delta}.
\end{aligned}
\end{equation}
So, this result agrees with the result of (\ref{eq: result of Rei12}) in the lower degrees.
\end{rem}

\section{Bounded property of PBCs for ordered products}\label{Sec: Bounded property of PBCs for ordered products}
In Section~\ref{Sec: Construction}, we showed that every exponent $u_{(a,b)}(m,n)$ is essentially expressed as a nonnegative PBC in $m$ and $n$. In this section, we give a property about its degree.
\par
Recall that, by Theorem~\ref{thm: exponents PBC}, we can express
\begin{equation}\label{eq: non-bounded expression}
u_{(a,b)}(m,n)=d(a,b)\sum_{\substack{0 \leq i \leq A,\\ 0 \leq j \leq B}} \alpha_{(a,b)}(i,j) \binom{m}{i}\binom{n}{j}
\end{equation}
for some $\alpha_{(a,b)}(i,j),A,B \in \mathbb{Z}_{\geq 0}$. More strongly, we have the following statement.
\begin{thm}[Bounded property]\label{main thm1}
Let $a$ and $b$ be positive integers. Then, we can express
\begin{equation}
u_{(a,b)}(m,n) = d(a,b) \sum_{\substack{1 \leq i \leq a,\\ 1 \leq j \leq b}} \alpha_{(a,b)}(i,j) \binom{m}{i} \binom{n}{j},
\end{equation}
where $\alpha_{(a,b)}(i,j)$ are nonnegative integers independent of $m$ and $n$.
\end{thm}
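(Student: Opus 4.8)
The plan is to upgrade Proposition~\ref{main prop1} by pinning down exactly which basis monomials $\bi{m}{i}\bi{n}{j}$ can occur. By Proposition~\ref{main prop1} we may already write $d^{-1}(a,b)\,u_{(a,b)}(m,n)=\sum_{i,j\ge 0}\alpha_{(a,b)}(i,j)\bi{m}{i}\bi{n}{j}$ with each $\alpha_{(a,b)}(i,j)\in\mathbb{Z}_{\ge 0}$ and only finitely many nonzero, so everything reduces to showing $\alpha_{(a,b)}(i,j)=0$ unless $1\le i\le a$ and $1\le j\le b$. The vanishing for $i=0$ or $j=0$ is immediate from boundary values: by (\ref{eq: exponents of n=0}) and the analogous statement with the roles of $m,n$ exchanged, $u_{(a,b)}(m,0)=u_{(a,b)}(0,n)=0$ when $a+b\ge 2$, and substituting $n=0$ gives $\sum_i\alpha_{(a,b)}(i,0)\bi{m}{i}\equiv 0$, hence $\alpha_{(a,b)}(i,0)=0$ for all $i$, and symmetrically $\alpha_{(a,b)}(0,j)=0$. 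So the whole content is the pair of degree bounds: as a polynomial, $u_{(a,b)}(m,n)$ has degree $\le a$ in $m$ and degree $\le b$ in $n$.

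I would prove these degree bounds by induction on $l=a+b$, running in parallel with the inductions of Section~\ref{subsec: The method and examples} and using the recursions (\ref{eq: reccurence formula for n=1}) and (\ref{eq: reccurence formula for m,n}) of Method~\ref{method: obtain the explicit forms}. The base case $(a,b)=(1,1)$ is Lemma~\ref{lem: u_(1,1)(m,n)}. For the inductive step fix $(a,b)$ with $a,b\ge 1$ and $a+b=l+1$, assume both degree bounds for all $(p,q)$ with $p,q\ge 1$ and $p+q\le l$, and write, as in Method~\ref{method: obtain the explicit forms}, $u_{(a,b)}(m,1)=\sum_{k=0}^{m-1}g(k)$ and $u_{(a,b)}(m,n)=n\,u_{(a,b)}(m,1)+\sum_{k=0}^{n-1}f'(m,k)$, where $g(m)$ (resp. $f'(m,n)$) collects all contributions that Algorithm~\ref{oa} adds to the exponent of $\mindilog{a}{b}$ when run on $C_{(m,1)}$ (resp. $C_{(m,n)}$). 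Since a factor of degree $l+1$ only commutes in the course of the algorithm (see (\ref{eq: merely}) and the remarks around it), $g$ and $f'$ are finite sums of the new-factor exponents $\tfrac1{|F|}u_{(p,q)}(|F|g_2,|F|g_1)$ produced in Step~2.2.~(ii), taken over pairs $\mindilog{x}{y}^{g_1}\mindilog{c}{d}^{g_2}$ with $p(c,d)+q(x,y)=(a,b)$ and $p,q\ge 1$, in which $\mindilog{x}{y}$ is the factor being pushed. Along the algorithm one maintains the invariant that the exponent of any currently occurring factor $\mindilog{c}{d}$ of degree $\le l$ is a PBC whose degree in $m$ is $\le c$ and degree in $n$ is $\le d$; expanding $u_{(p,q)}$ by the inductive hypothesis then shows such a contribution has degree $\le pc+qx=a$ in $m$ and $\le pd+qy=b$ in $n$. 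This already handles $\deg_m f'\le a$ and $\deg_n\bigl(n\,u_{(a,b)}(m,1)\bigr)\le 1\le b$, and reduces the theorem to the two sharpenings $\deg_m g\le a-1$ and $\deg_n f'\le b-1$, since the summations $\sum_{k=0}^{m-1}$ and $\sum_{k=0}^{n-1}$ raise the corresponding degree by at most one.

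The heart of the matter, and the step I expect to be the main obstacle, is the following structural claim about Algorithm~\ref{oa} run on $C_{(m,1)}$, resp. $C_{(m,n)}$: the only factors ever \emph{pushed} (i.e.\ occurring as $\mindilog{x}{y}$ in Step~2.2) are the factors created during the run, together with certain distinguished original factors --- in $C_{(m,1)}$ only the prefix factor $\mindilog{1}{1}^{1}$, and in $C_{(m,n)}$ the factors of the first ordered block (those with exponents $u_{(c,d)}(m,1)$); and every pushed factor $\mindilog{x}{y}$ carries an exponent $g_1$ with $\deg_m g_1\le x-1$ in the $C_{(m,1)}$ run, resp.\ with $\deg_n g_1\le y-1$ in the $C_{(m,n)}$ run. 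Granting this, in each Step~2.2.~(ii) the pushed factor supplies a strictly deficient degree, so a contribution landing on $\mindilog{a}{b}$ has degree $\le pc+q(x-1)=a-q\le a-1$ in $m$ in the $C_{(m,1)}$ run and degree $\le pd+q(y-1)=b-q\le b-1$ in $n$ in the $C_{(m,n)}$ run --- precisely the sharpenings needed. Feeding $\deg_m g\le a-1$ and $\deg_n f'\le b-1$ back into (\ref{eq: reccurence formula for n=1}) and (\ref{eq: reccurence formula for m,n}) then gives $\deg_m u_{(a,b)}(m,n)\le a$ and $\deg_n u_{(a,b)}(m,n)\le b$, completing the induction; nonnegativity and integrality of the $\alpha_{(a,b)}(i,j)$ are inherited from Proposition~\ref{main prop1}.

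I would establish the structural claim by induction on the number of elementary steps of the algorithm, maintaining simultaneously: the invariant above on exponents of degree-$\le l$ factors; the assertion that newly created factors of degree $\le l$ have exponents of degree $\le (\text{first coordinate})-1$ in $m$, resp.\ $\le(\text{second coordinate})-1$ in $n$; and the assertion that original factors other than the distinguished ones are never moved and keep their exponents. The geometric input that makes this go through is that a factor $F\mindilog{p}{q}$ arising from $\mindilog{x}{y}^{g_1}\mindilog{c}{d}^{g_2}$ with $p,q\ge 1$ has slope strictly between those of $\mindilog{c}{d}$ and $\mindilog{x}{y}$ (a mediant-type inequality), so newly created factors are always inserted in the ``interior'' and cannot put an already-placed original factor out of order; combined with the fact that the two blocks of $C_{(m,n)}$, and the ordered tail of $C_{(m,1)}$, are internally ordered, this forces the list of pushable factors to be as stated. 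Checking that the finitely many successive runs of Algorithm~\ref{oa} (permitted by the remark after Proposition~\ref{prop: finish ordering algorithm}) preserve all of this is the technical core of the argument.
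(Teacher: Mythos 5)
Your proposal is essentially correct but follows a genuinely different route from the paper for the hard part (the degree bounds $\deg_m\le a$, $\deg_n\le b$); the elimination of the $i=0$ and $j=0$ terms via boundary values is the same as the paper's Claim~1. Where you run a direct induction and try to maintain the universal invariant that \emph{every} factor ever pushed by Algorithm~\ref{oa} carries a degree-deficient exponent (and that original second-block factors are never pushed), the paper argues by contradiction from a minimal counterexample: assuming $(a,b)$ with $a+b$ minimal has a term $\bi{m}{k}\bi{n}{l}$ with $l>b$, it traces that term back to a single anti-ordered pair $\mindilog{z}{w}^{g}\mindilog{x}{y}^{f}$ with $p(x,y)+q(z,w)=(a,b)$, and the same arithmetic you use ($tp+t'q\ge b$ versus $t\le y$, $t'\le w$, $py+qw=b$) forces the \emph{pushed} exponent $g$ to have full degree $w$ in $n$; Lemma~\ref{lem: the form of algo} then shows this $g$ is absorbed into $u_{(z,w)}(m,n+1)-u_{(z,w)}(m,n)$, so $u_{(z,w)}$ already violates the bound at strictly smaller total degree --- contradiction. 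The payoff of the paper's device is that it never has to prove your structural claim in full: it only needs the weak statement of Lemma~\ref{lem: the form of algo} about the shape of the product just before $\mindilog{z}{w}$ is processed, plus the nonnegativity of all PBC contributions (so nothing cancels in the telescoping). Your route, by contrast, front-loads exactly the bookkeeping you flag as "the technical core": that new factors $p(c,d)+q(x,y)$ are always inserted in order relative to everything to the left of the passed factor (the mediant-type inequality, which does hold since $\{(e,f),p(c,d)+q(x,y)\}=p\{(e,f),(c,d)\}+q\{(e,f),(x,y)\}\le 0$ whenever both constituents dominate $(e,f)$), that consequently original ordered-block factors are never pushed, and that the deficiency of pushed exponents survives the finitely many successive runs of the algorithm. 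I believe that invariant is true and your sketch of why is sound (in particular, degree-$(l+1)$ factors only commute, so their unknown exponents never enter a Step~2.2.(ii) contribution), so if you carry out that induction carefully your proof closes; but as written the deferred structural lemma is the bulk of the argument, and you should be aware that the paper's minimality trick is precisely a way of not having to prove it.
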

Namely, $(i,j)$ can be restricted as $1 \leq i \leq a$ and $1 \leq j \leq b$ in the sum of (\ref{eq: non-bounded expression}).
\par
We show the following two claims.\\
\textbf{Claim~1.}\ For any $i=0,1,\dots,A$ and $j=0,1,\dots,B$, it holds that $\alpha_{(a,b)}(i,0)=\alpha_{(a,b)}(0,j)=0$.
\\
\textbf{Claim~2.}\ For any $(k,l) \in \mathbb{Z}_{\geq 0}^2$ with $k> a$ or $l >b$, it holds that $\alpha_{(a,b)}(k,l)=0$. 
\begin{proof}[Proof of Claim~1]
By (\ref{eq: exponents of n=0}), we have
\begin{equation}
u_{(a,b)}(A,0)=0.
\end{equation}
By (\ref{eq: non-bounded expression}), we have
\begin{equation}
u_{(a,b)}(A,0)=d(a,b)\sum_{\substack{0 \leq i \leq A}}\alpha_{(a,b)}(i,0)\binom{A}{i}=0.
\end{equation}
Since $\alpha_{(a,b)}(i,0) \geq 0$ and $\binom{A}{i}>0$, we obtain $\alpha_{(a,b)}(i,0)=0$ for any $i=0,1,\dots,A$. Similarly, we have $\alpha_{(a,b)}(0,j)=0$ for any $j=0,1,\dots,B$. Thus, we have
\begin{equation}
u_{(a,b)}(m,n) = d(a,b)\sum_{\substack{1 \leq i \leq A,\\ 1 \leq j \leq B}}\alpha_{(a,b)}(i,j) \binom{m}{i} \binom{n}{j}.
\end{equation}
\end{proof}
Before proving Claim~2, we show the following lemma.
\begin{lem}\label{lem: the form of algo}
Let $l \in \mathbb{Z}_{\geq 1}$, and let $C_{(m,n)}$ be the product which is defined by (\ref{eq: C of (m,n)}) in $G^{\leq l+1}$. Let $\mindilog{x}{y}$ be a dilogarithm element with $x+y \leq l+1$, and let $\mindilog{z}{w}$ be the greatest dilogarithm element such that $\mindilog{z}{w}<\mindilog{x}{y}$ and $z+w \leq l+1$. Let $D$ be the product which is obtained by applying Algorithm~\ref{oa} to $C_{(m,n)}$ repeatedly until $\mindilog{x}{y}^{*}$ is in the stable part. Then, the form of $D$ is as follows:
\begin{equation}
D=\cdots\dilog{z}{w}^{u_{(z,w)}(m,n)}D^{\mathrm{stab}}.
\end{equation}
\end{lem}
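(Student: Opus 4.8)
The plan is to track, through the ordering procedure of Section~\ref{Sec: Construction}, a single distinguished factor of $C_{(m,n)}$ and read off both its final position and its exponent.

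First I would record the shape of the procedure. Iterating Algorithm~\ref{oa}, one always treats the greatest factor $\mindilog{x'}{y'}$ currently present in the non-ordered part, and by Lemma~\ref{action for pentagon relation}(c) (equivalently, by the reasoning in the proof of Proposition~\ref{prop: finish ordering algorithm}) every new factor it produces has the form $F\mindilog{p}{q}$ with $F=\left(\begin{smallmatrix}a&x'\\b&y'\end{smallmatrix}\right)$, $(a,b)<(x',y')$ and $p,q\ge 1$, so that $\{F\bi{p}{q},(x',y')\}=p\{(a,b),(x',y')\}<0$, i.e. $F\bi{p}{q}<(x',y')$ strictly. Hence the procedure treats the vectors of degree $\le l+1$ in strictly decreasing order for $\le$, and therefore by the moment $\mindilog{x}{y}^{*}$ has just entered the ordered part the vectors already treated are exactly $\{(a,b)\in N^{+}:a+b\le l+1,\ (a,b)\ge(x,y)\}$. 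These constitute $\vec D$, arranged in increasing order with $(x,y)$ leftmost, while every factor of $\hat D$ has vector $<(x,y)$.

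Next I would locate the rightmost factor of $\hat D$. In $C_{(m,n)}$ the second ordered block of (\ref{eq: C of (m,n)}) contains its $(z,w)$-factor $\mathbf w_{0}:=\mindilog{z}{w}^{u_{(z,w)}(m,n)}$ immediately before its $(x,y)$-factor, precisely because by hypothesis no vector of degree $\le l+1$ lies strictly between $(z,w)$ and $(x,y)$; moreover every factor of $C_{(m,n)}$ lying to the right of $\mathbf w_{0}$ has vector $\ge(x,y)$. I would then follow $\mathbf w_{0}$ through the procedure up to the stopping moment, using that only vectors $\ge(x,y)$ have been treated: $\mathbf w_{0}$ is never the factor moved nor the accumulator of Algorithm~\ref{oa} (its vector lies strictly below every treated one), so by the form of Steps~2.1 and~2.2 its exponent is untouched; and whenever a treated factor leaps rightward over a factor lying at or to the right of $\mathbf w_{0}$, the two vectors involved both lie in $\{(z,w)\}\cup\{(a,b):(a,b)\ge(x,y)\}$, so (the relation being of type Step~2.2(ii), hence with strictly positive determinant) the vectors it deposits are strictly between them and of degree $\le l+1$, hence — again because nothing of such degree lies strictly between $(z,w)$ and $(x,y)$ — are themselves $\ge(x,y)$ and, by the decreasing treatment order, are eventually pushed past $\mathbf w_{0}$ into $\vec D$. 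So at the stopping moment $\mathbf w_{0}$ is the factor immediately to the left of $\vec D$, still carrying exponent $u_{(z,w)}(m,n)$.

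Finally, one must check that $\mathbf w_{0}$ is not itself absorbed into the ordered part, i.e. that $\vec D$ begins exactly at $\mindilog{x}{y}^{*}$. Since $(z,w)<(x,y)$ the suffix $\mathbf w_{0}\vec D$ is an ordered product, so $\mathbf w_{0}$ can fail to begin $\vec D$ only if some factor to its left has vector $\ge(z,w)$; and this is always so: the first block's $(z,w)$-factor $\mindilog{z}{w}^{u_{(z,w)}(m,1)}$ provides it, and when that exponent vanishes a $(z,w)$-factor is created to the left of $\mathbf w_{0}$ during the treatment of $(0,1)$, the largest relevant vector, whose first-block copy $\mindilog{0}{1}^{1}$ must travel all the way to the right end. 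This gives $D=(\cdots)\,\mathbf w_{0}\,\vec D$ with $\vec D=\mindilog{x}{y}^{*}(\cdots)$, as claimed. The main obstacle is the positional bookkeeping in the last two steps: one has to be sure that nothing of vector $\le(z,w)$ is ever inserted to the right of the tracked copy $\mathbf w_{0}$, that $\mathbf w_{0}$ is never merged with another factor, and that the absorption above never happens; for all of this the precise descriptions~C and~E of the output of Algorithm~\ref{oa} in Lemma~\ref{ordering algorithm'}, together with the ``no vector strictly between $(z,w)$ and $(x,y)$'' hypothesis, are exactly the tools needed.
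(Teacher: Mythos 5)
Your first two paragraphs are essentially the paper's own proof: the paper likewise starts from the observation that in the initial $C_{(m,n)}$ every factor to the right of the second block's $\mindilog{z}{w}^{u_{(z,w)}(m,n)}$ is strictly greater than $(z,w)$, notes that Steps~2.1 and~2.2 only ever deposit, to the right of that factor, vectors of degree $\leq l+1$ lying strictly between the jumped-over vector and the processed one, and then excludes any factor $\mindilog{u}{v}^{*}$ sitting between $\mindilog{z}{w}^{u_{(z,w)}(m,n)}$ and $\vec{D}$ via the contradiction $(z,w)<(u,v)<(x,y)$ with $\deg(u,v)\leq l+1$ against the maximality of $(z,w)$. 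You phrase this as a preserved positional invariant rather than a contradiction, but the content is the same, and your bookkeeping (the tracked factor is never the moved factor nor the accumulator, hence its exponent survives) is correct.

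Your third paragraph addresses a point the paper's proof leaves implicit, namely that $\vec{D}$ begins exactly at $\mindilog{x}{y}^{*}$ rather than absorbing $\mindilog{z}{w}^{u_{(z,w)}(m,n)}$. The one unsubstantiated step in your write-up is there: the claim that, when $u_{(z,w)}(m,1)=0$, a $(z,w)$-factor is necessarily created to the left of $\mathbf{w}_0$ during the treatment of $(0,1)$ is asserted without proof; making it precise would require exhibiting a jumped-over factor $(a,b)<(z,w)$ to the left of $\mathbf{w}_0$ and a pair $(p,q)\in\mathbb{Z}_{\geq 1}^2$ with $p(a,b)+q(0,1)=(z,w)$ and nonvanishing exponent, which is not obvious in general. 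Since the way Lemma~\ref{lem: the form of algo} is used in the proof of Theorem~\ref{main thm1} only needs that $\mindilog{z}{w}^{u_{(z,w)}(m,n)}$ is the rightmost factor of the non-ordered part with its exponent intact, this refinement is inessential, but you should either complete that argument or drop the claim.
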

In the above expression, $D^{\mathrm{stab}}=\mindilog{x}{y}^{*}\cdots\mindilog{0}{1}^{n+1}$ is the stable part of $D$, which is defined in Definition~\ref{dfn: complete}.
\begin{proof}
Since $\mindilog{z}{w}$ is the greatest element such that $\mindilog{z}{w} < \mindilog{x}{y}$ and $z+w \leq l+1$, the form of initial $C_{(m,n)}$ is
\begin{equation}
\cdots \dilog{z}{w}^{u_{(z,w)}(m,n)}\dilog{x}{y}^{u_{(x,y)}(m,n)} \prod_{(x,y)<(p,q)} \dilog{p}{q}^{u_{(p,q)}(m,n)}.
\end{equation}
In particular, every dilogarithm element on the right side of $\mindilog{z}{w}^{u_{(z,w)}(m,n)}$ is greater than $\mindilog{z}{w}$. When we apply Algorithm~\ref{oa} repeatedly, every factor $\mindilog{u}{v}^{*}$ appearing in the right side of $\mindilog{z}{w}^{u_{(z,w)}(m,n)}$ satisfies $\mindilog{z}{w}<\mindilog{u}{v}$. Thus, if the form of $D$ is $\cdots\mindilog{z}{w}^{u_{(z,w)}(m,n)}\cdots\mindilog{u}{v}^{*}\cdots{D}^{\mathrm{stab}}$, it holds that $\mindilog{z}{w}<\mindilog{u}{v}<\mindilog{x}{y}$. (Note that ${D}^{\mathrm{stab}}$ has the factor $\mindilog{x}{y}^{*}$.) It contradicts the assumptions of $\mindilog{z}{w}$.
\end{proof}
Let us prove Caim~2. Let $f(m,n)$ be a polynomial or a PBC. We write the degree of $f(m,n)$ as a polynomial in $n$ by $\deg_n(f(m,n))$.
\begin{proof}[Proof of Claim~2]
Suppose that the claim does not hold; in other words, there exists $\alpha_{(a,b)}(k,l) > 0$ such that $k > a$ or $l > b$. Suppose $a+b$ is smallest among such $(a,b)$, and there exists $l > b$ such that $\alpha_{(a,b)}(k,l)>0$. (If there exists such $k > a$, we can do a similar argument.) Then, since
\begin{equation}
\begin{aligned}
&\ u_{(a,b)}(m,n+1)-u_{(a,b)}(m,n)\\
=&\ d(a,b)\sum_{\substack{1 \leq i \leq A,\\ 1 \leq j \leq B}} \alpha_{(a,b)}(i,j)\binom{m}{i}\left\{\binom{n+1}{j}-\binom{n}{j}\right\}\\
\overset{(\ref{a})}{=}&\ d(a,b)\sum_{\substack{1 \leq i \leq A,\\ 1 \leq j \leq B}} \alpha_{(a,b)}(i,j)\binom{m}{i}\binom{n}{j-1}\\
=&\ d(a,b)\alpha_{(a,b)}(k,l)\binom{m}{k}\binom{n}{l-1}\\
&\ \qquad+d(a,b)\sum_{\substack{1 \leq i \leq A,\ 1 \leq j \leq B,\\ (i,j)\neq(k,l)}} \alpha_{(a,b)}(i,j)\binom{m}{i}\binom{n}{j-1},
\end{aligned}
\end{equation}
$u_{(a,b)}(m,n+1)-u_{(a,b)}(m,n)$ has a factor $\binom{m}{k}\binom{n}{l-1}$ with a positive coefficient. In particular, it holds that
\begin{equation}
\deg_{n}(u_{(a,b)}(m,n+1)-u_{(a,b)}(m,n)) \geq l-1 \geq b.
\end{equation}
Now, we apply Algorithm~\ref{oa} to $C_{(m,n)}$ repeatedly. Then, by Proposition~\ref{prop: recurrence}, we have a following relation:
\begin{equation}
\begin{aligned}
u_{(a,b)}(m,n+1)&=u_{(a,b)}(m,n)+u_{(a,b)}(m,1)\\
&\ \qquad+(\textup{factors produced in Algorithm~\ref{oa}}).
\end{aligned}
\end{equation}
Because $\deg_n(u_{(a,b)}(m,n+1)-u_{(a,b)}(m,n)) \geq b$, there exists an anti-ordered pair $\mindilog{z}{w}^{g}\mindilog{x}{y}^{f}$ which produces $\mindilog{a}{b}^{h}$ with $\deg_n(h) \geq b$. If $x+y=a+b$ or $z+w=a+b$, then this anti-ordered pair does not produce new factors. Assume $x+y,z+w<a+b$. Let $F=\left(\begin{smallmatrix} x & z\\ y & w\\ \end{smallmatrix}\right)$. Then, Step~1.1~(ii) in Algorithm~\ref{oa} bocomes
\begin{equation}
\begin{aligned}
&\ \dilog{z}{w}^{g}\dilog{x}{y}^{f}\\
\equiv&\ \dilog{x}{y}^{f} \biggl\{\orderedprod_{\substack{(p,q) \in \mathbb{Z}_{\geq 1}^2,\\ \deg(p,q) \leq a+b-1,\\ \deg F\binom{p}{q}\leq a+b}} \left(F\dilog{p}{q}\right)^{u_{(p,q)}(|F|f,|F|g)} \biggr\}\dilog{z}{w}^{f} \mod G^{>a+b}. 
\end{aligned}
\end{equation}
Since the above product on the RHS has a factor $\mindilog{a}{b}^{h}$, there exists $(p,q) \in \mathbb{Z}_{\geq 1}^2$ satisfying
\begin{equation}\label{eq: bounded 1}
F\begin{pmatrix}{p}\\{q}\end{pmatrix}=\begin{pmatrix}{a}\\{b}\end{pmatrix} \Longleftrightarrow \quad \left\{\begin{aligned}
px+qz&=a,\\
py+qw&=b.\\
\end{aligned}\right.
\end{equation}
By using the above $(p,q)$, the exponent of $\mindilog{a}{b}$ is $h=u_{(p,q)}{(|F|f,|F|g)}/|F|$.
Moreover, since $\deg_n(h) \geq b$, we have $\deg_n(u_{(p,q)}(|F|f,|F|g)) \geq b$. Since $\deg(p,q) < \deg(a,b)$ and the smallest assumption of $(a,b)$, we have $\deg_m(u_{(p,q)}(m,n)) \leq p$ and $\deg_n(u_{(p,q)}(m,n)) \leq q$. Now, let $\deg_n (f)=t$ and $\deg_n(g)=t'$. Then, $\deg_n(u_{(p,q)}(|F|f,|F|g)) \leq tp+t'q$. Hence, we have 
\begin{equation}\label{eq: contradiction of main thm}
tp+t'q \geq b.
\end{equation}
On the other hand, since $\deg_{n}(u_{(x,y)}(m,n)) \leq y$, we have $\deg_n(g)=t' \leq y$. Similary, $\deg_n(f)=t \leq w$ holds. Moreover, since $p,q \geq 1$, we have
\begin{equation}
tp+t'q \leq yp+wq\overset{(\ref{eq: bounded 1})}=b.
\end{equation}
Combining the above two inequations, we have $tp+t'q=b$, and it implies $t=y$ and $t'=w$. In particular, $\deg_n(g)=w$ holds. To summarize, $\mindilog{a}{b}^{h}$ is produced when $\mindilog{z}{w}^{g}$ moves to the right hand side, and $\deg_n(g) \geq w$. We apply Algorithm~\ref{oa} to $C_{(m,n)}$ until the greatest dilogarithm element appearing in its unstable part is $\mindilog{z}{w}$. Then, by Lemma~\ref{lem: the form of algo}, $C_{(m,n)}$ becomes
\begin{equation}
\cdots\dilog{z}{w}^{g}\cdots\dilog{z}{w}^{u_{(z,w)}(m,n)}\orderedprod_{(u,v)>(z,w)}\dilog{u}{v}^{u_{(u,v)}(m,n+1)}.
\end{equation}
It implies that
\begin{equation}
u_{(z,w)}(m,n+1)=u_{(z,w)}(m,n)+g+\cdots.
\end{equation}
Thus, $\deg_n(u_{(z,w)}(m,n)) > \deg_n(g)\geq w$ holds. It contradicts $\deg(z,w)<\deg(a,b)$.
\end{proof}
In Section~\ref{subsec: properties of coefficients}, we can see some properties for these coefficients $\alpha_{(a,b)}(i,j)$. 
\section{Further results and examples}
\subsection{Inverse formula}\label{subsec: inverse formula}
Thanks to Theorem~\ref{main thm1}, $u_{(a,b)}(m,n)$ may be recovered from the special values $u_{(a,b)}(i,j)$ for $1 \leq i \leq a$ and $1 \leq j \leq b$.\par
Let $I=\{(i,j) \in \mathbb{Z}\mid1 \leq i \leq a, 1 \leq j \leq b\}$, and let $u_{(a,b)}(m,n)=\sum_{\substack{1 \leq k \leq a,\\ 1 \leq l \leq b}} \alpha_{k,l} \binom{m}{k}\binom{n}{l}$. Namely, by using the notation of Theorem~\ref{main thm1}, we write $\alpha_{k,l}=d(a,b)\alpha_{(a,b)}(k,l)$.\par
First, we define the following three matrices:
\begin{eqnarray}
\alpha&=&(\alpha_{i,j})_{(i,j) \in I} \in \mathrm{Mat}_{I\times1}(\mathbb{Q}),\\
A&=&\left(\binom{i}{i'}\binom{j}{j'}\right)_{((i,j),(i',j')) \in I \times I}, \label{eq: matrix A}\\
u&=&(u_{(a,b)}(i,j))_{(i,j) \in I} \in \mathrm{Mat}_{I\times1}(\mathbb{Q}).
\end{eqnarray}
These three matrices have the relation
\begin{equation}
A\alpha=\left(\sum_{(i',j') \in I} \alpha_{i',j'}\binom{i}{i'}\binom{j}{j'}\right)_{(i,j) \in I}=(u_{(a,b)}(i,j))_{(i,j) \in I}=u.
\end{equation}
This implies
\begin{equation}\label{eq: inverse formula as matrix}
\alpha=A^{-1}u.
\end{equation}
Next, we obtain the inverse matrix $A^{-1}$. We define $P_a \in \mathrm{Mat}_{a}(\mathbb{Z})$ as follows:
\begin{equation}
P_a=\left(\binom{i}{j}\right)_{(i,j)}.
\end{equation}
Similarly, we define $P_b \in \mathrm{Mat}_b(\mathbb{Z})$. Then, $A=P_a \otimes P_b$ holds, where $P_a \otimes P_b =\left(\binom{i}{i'}\binom{j}{j'}\right)_{((i,j),(i',j'))} \in \mathrm{Mat}_{I}(\mathbb{Z})$ is the tensor product. The inverse matrix of $P_a$ is known as follows.
\begin{lem}[{e.g., \cite[Thm.~37]{Spi19}}]
For any $a \in \mathbb{Z}_{>0}$, we have
\begin{equation}
P_a^{-1}=\left((-1)^{i+j}\binom{i}{j}\right).
\end{equation}
\end{lem}
Thus, we obtain the inverse matrix $A^{-1}$ as follows.
\begin{lem}\label{lem: PBCinverse}
Let $A$ be the matrix defined in (\ref{eq: matrix A}). Then, it holds that
\begin{equation}
A^{-1}=\left((-1)^{i+i'+j+j'}\binom{i}{i'} \binom{j}{j'}\right)_{((i,j),(i',j'))}.
\end{equation}
\end{lem}
\begin{proof}
It is immediately shown by 
\begin{equation}
A^{-1}=\left(P_a \otimes P_b\right)^{-1}=P_a^{-1} \otimes P_b^{-1}.
\end{equation}
\end{proof}
By the above arguments, we may write the coefficients $\alpha_{k,l}=d(a,b)\alpha_{(a,b)}(k,l)$ by using special values $u_{(a,b)}(i,j)$.
\begin{prop}[{\em inverse formula}]\label{prop: inverse formula}
Let $a$ and $b$ be positive integers. Then, for any $1 \leq k \leq a$ and $1 \leq l \leq b$, it holds that
\begin{equation}
d(a,b)\alpha_{(a,b)}(k,l) = \sum_{\substack{1 \leq i \leq k,\\ 1 \leq j \leq l}} (-1)^{i+j+k+l} \binom{k}{i} \binom{l}{j} u_{(a,b)}(i,j).
\end{equation}
\end{prop}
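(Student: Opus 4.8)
The plan is to read the formula directly off the matrix identity $\alpha = A_{a,b}^{-1}u$ that has just been established, together with the explicit description of $A_{a,b}^{-1}$ from Lemma~\ref{lem: PBCinverse}; the only point requiring a word of justification is that the resulting sum over the full index set $I$ collapses to the stated range.

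Concretely, I would first invoke Theorem~\ref{main thm1}, which guarantees that $u_{(a,b)}(m,n)$ lies in the span of $\left\{\binom{m}{i}\binom{n}{j}\right\}_{1\le i\le a,\,1\le j\le b}$, so that the coefficient vector $\alpha=(\alpha_{i,j})_{(i,j)\in I}$ with $\alpha_{k,l}=d(a,b)\alpha_{(a,b)}(k,l)$ is a well-defined element of $\mathrm{Mat}_{I\times 1}(\mathbb{Q})$; this is precisely what makes $A_{a,b}$ a finite square matrix and the equation $A_{a,b}\alpha=u$ meaningful, as recorded in the paragraph preceding the statement. Then by Lemma~\ref{lem: PBCinverse} we have $\alpha = A_{a,b}^{-1}u$ with $(A_{a,b}^{-1})_{((k,l),(i,j))}=(-1)^{k+i+l+j}\binom{k}{i}\binom{l}{j}$.

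Next I would extract the $(k,l)$-component, obtaining $\alpha_{k,l}=\sum_{(i,j)\in I}(-1)^{k+i+l+j}\binom{k}{i}\binom{l}{j}\,u_{(a,b)}(i,j)$. Since $\binom{k}{i}=0$ whenever $i>k$ and $\binom{l}{j}=0$ whenever $j>l$, every term of this sum with $i>k$ or $j>l$ vanishes, so the sum equals $\sum_{1\le i\le k,\ 1\le j\le l}(-1)^{k+i+l+j}\binom{k}{i}\binom{l}{j}\,u_{(a,b)}(i,j)$. Substituting $\alpha_{k,l}=d(a,b)\alpha_{(a,b)}(k,l)$ and using $(-1)^{k+i+l+j}=(-1)^{i+j+k+l}$ gives exactly the asserted identity.

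There is no real obstacle here: all the substance was already spent on Theorem~\ref{main thm1} (which supplies the finiteness of $I$, hence the invertibility setup) and on Lemma~\ref{lem: PBCinverse} (which inverts the binomial matrix). The one thing to be careful about in writing it up is to confirm that the index set $I=\{(i,j): 1\le i\le a,\ 1\le j\le b\}$ used in the matrix formalism is compatible with the smaller range $1\le i\le k,\ 1\le j\le l$ appearing in the final formula, and this compatibility is exactly the vanishing-binomial remark above.
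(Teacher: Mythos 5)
Your proposal is correct and follows exactly the paper's own route: the paper derives the identity immediately from the matrix relation $\alpha=A_{a,b}^{-1}u$ in (\ref{eq: inverse formula as matrix}) together with the explicit inverse computed in Lemma~\ref{lem: PBCinverse}. Your additional remark that the sum over all of $I$ collapses to $1\le i\le k$, $1\le j\le l$ via the vanishing of $\binom{k}{i}$ for $i>k$ is the same (implicit) observation the paper relies on, so there is nothing to add.
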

\begin{proof}
It is immediately shown by (\ref{eq: inverse formula as matrix}) and Lemma~\ref{lem: PBCinverse}.
\end{proof}
\begin{ex}
In \cite[Algorithm~5.7]{Nak23}, the method to find the exponent of $\mindilog{a}{b}$ in the ordered product of $\mindilog{0}{1}^n\mindilog{1}{0}^m$ is known for certain $m$ and $n$. For example, consider the exponent of $\mindilog{3}{2}$. Then, we can find the special values of $u_{(3,2)}(m,n)$ as follows:
\begin{equation}
\begin{aligned}
u_{(3,2)}(1,1)=0, u_{(3,2)}(1,2)=0, u_{(3,2)}(2,1)=0,\\
u_{(3,2)}(2,2)=2, u_{(3,2)}(3,1)=1, u_{(3,2)}(3,2)=14.
\end{aligned}
\end{equation}
By Proposition~\ref{prop: inverse formula}, we have $u_{(3,2)}(m,n)=2\binom{m}{2}\binom{n}{2}+\binom{m}{3}\binom{n}{1}+6\binom{m}{3}\binom{n}{2}$.
\end{ex}

\subsection{Reciprocity}\label{subsec: reciprocity}
\begin{prop}[reciprocity]\label{prop: reciprocity}
Let $(a,b) \in N^{+}$. Then, for any $m,n \in \mathbb{Z}_{\geq 0}$,
\begin{equation}
u_{(a,b)}(m,n)=u_{(b,a)}(n,m)
\end{equation}
holds.
\end{prop}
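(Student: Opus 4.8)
The plan is to exploit the interchange-of-coordinates symmetry of the defining relation, realized as the similarity transformation by $F=\left(\begin{smallmatrix}0&1\\1&0\end{smallmatrix}\right)$. Note that $F\in\Mat_2(\mathbb{Z}_{\geq 0})$ with $|F|=-1$, so $F\dilog{a}{b}=\dilog{b}{a}^{-1}$ for every $(a,b)\in N^{+}$; this is exactly the case $|F|<0$ discussed in Remark~\ref{rem: |F|<0}. The key observation is that interchanging the two coordinates reverses the total order $\leq$ on non-proportional vectors, since $\{(a,b),(c,d)\}=-\{(b,a),(d,c)\}$ by (\ref{eq: skew bilinear form}), while proportional factors commute by Proposition~\ref{pentagon relation}~(a).

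First I would treat $m,n\in\mathbb{Z}_{>0}$ (the cases $m=0$ or $n=0$ being immediate from (\ref{eq: exponents of n=0}) and (\ref{eq: exponents of b=0})). Let $C=\dilog{1}{0}^m\bigl\{\orderedprod_{j\in J}\dilog{a_j}{b_j}^{u_{(a_j,b_j)}(m,n)}\bigr\}\dilog{0}{1}^n$ be the ordered product of $\dilog{0}{1}^n\dilog{1}{0}^m$, where $\{(a_j,b_j)\}_{j\in J}$ exhausts $N^{+}\setminus\{(1,0),(0,1)\}$ and every $a_j,b_j$ is positive by Lemma~\ref{lem: the ends of ordering products}. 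Applying (\ref{eq: F-action on a CSD}) with this $F$ gives
\begin{equation*}
\dilog{1}{0}^{-n}\dilog{0}{1}^{-m}=\dilog{0}{1}^{-m}\Bigl\{\prod_{j\in J}\dilog{b_j}{a_j}^{-u_{(a_j,b_j)}(m,n)}\Bigr\}\dilog{1}{0}^{-n},
\end{equation*}
and by Remark~\ref{rem: |F|<0} the right-hand side is an anti-ordered product (the indices $(b_j,a_j)=\sigma(a_j,b_j)$ decrease along $J$, and $(0,1),(1,0)$ sit correctly as the largest, resp.\ smallest, factor).

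Next I would take the inverse of both sides. The left-hand side becomes $\dilog{0}{1}^m\dilog{1}{0}^n$, and since the inverse of an anti-ordered product is an ordered product — reverse the order of the factors and negate the exponents — the right-hand side becomes $\dilog{1}{0}^n\bigl\{\orderedprod_{j\in J}\dilog{b_j}{a_j}^{u_{(a_j,b_j)}(m,n)}\bigr\}\dilog{0}{1}^m$, which is genuinely ordered because, read in the reversed order of $J$, the indices $(b_j,a_j)$ increase and lie strictly between $(1,0)$ and $(0,1)$. On the other hand, the definition of $u$ applied with $m$ and $n$ interchanged reads $\dilog{0}{1}^m\dilog{1}{0}^n=\orderedprod_{(a,b)\in N^{+}}\dilog{a}{b}^{u_{(a,b)}(n,m)}$. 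By the uniqueness of the ordered product in Theorem~\ref{ordering lemma}, the exponents of these two ordered products of $\dilog{0}{1}^m\dilog{1}{0}^n$ must agree, and comparing the exponent of $\dilog{a}{b}$ gives $u_{(a,b)}(n,m)=u_{(b,a)}(m,n)$ for every $(a,b)\in N^{+}$; this is the asserted reciprocity (after relabeling).

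Conceptually this is just a symmetry of the set-up, so no genuine obstacle arises; the only delicate point is the sign- and order-bookkeeping. Because $|F|=-1$, the $F$-action introduces inverse powers and globally reverses the product, so one must carefully verify — using the order-reversal of the coordinate swap (Remark~\ref{rem: |F|<0}) together with the commutativity of proportional dilogarithm elements (Proposition~\ref{pentagon relation}~(a)) — that after taking inverses one really lands on a bona fide ordered product before invoking the uniqueness statement of Theorem~\ref{ordering lemma}.
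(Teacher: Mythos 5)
Your proposal is correct and follows essentially the same route as the paper: apply the similarity transformation by $F=\left(\begin{smallmatrix}0&1\\1&0\end{smallmatrix}\right)$ via Proposition~\ref{prop: F-order} (handling $|F|<0$ as in Remark~\ref{rem: |F|<0}), take inverses to turn the resulting anti-ordered product into an ordered one, and compare exponents with the defining relation for $u_{(b,a)}(n,m)$. The paper's proof is exactly this argument, stated slightly more tersely.
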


\begin{proof}
By definition of $u_{(a,b)}(m,n)$, we have
\begin{equation}
\dilog{0}{1}^n\dilog{1}{0}^m=\orderedprod_{(a,b) \in N^{+}}\dilog{a}{b}^{u_{(a,b)}(m,n)}.
\end{equation}
Let $F= \left(\begin{smallmatrix}0 & 1\\ 1 & 0 \\\end{smallmatrix}\right)$. Then, by acting $F$, we have
\begin{equation}
\dilog{1}{0}^{-n}\dilog{0}{1}^{-m}=\prod_{(a,b) \in N^{+}} \dilog{b}{a}^{-u_{(a,b)}(m,n)}.
\end{equation}
Considering the inverse of the above relation, we have
\begin{equation}\label{eq: reciprocity}
\dilog{0}{1}^m\dilog{1}{0}^n=\prod_{(a,b) \in (N^{+})^{\mathrm{op}}} \dilog{b}{a}^{u_{(a,b)}(m,n)}.
\end{equation}
The index set $(N^{+})^{\mathrm{op}}$ is the opposite ordered set of $N^{+}$. The RHS is not the strongly ordered product because of the parallel dilogarithm elements. However, by using the relation (\ref{exchange}), we may rearrange it to the strongly ordered product without changing these exponents. Thus, we have
\begin{equation*}
\dilog{0}{1}^{m}\dilog{1}{0}^{n} = \orderedprod_{(b,a) \in N^{+}} \dilog{b}{a}^{u_{(a,b)}(m,n)}.
\end{equation*}
It implies that
\begin{equation}
u_{(b,a)}(n,m)=u_{(a,b)}(m,n).
\end{equation}
\end{proof}

\subsection{Properties of coefficients}\label{subsec: properties of coefficients}
By Theorem~\ref{main thm1}, we write
\begin{equation}
u_{(a,b)}(m,n)=d(a,b)\sum_{\substack{1 \leq i \leq a,\\ 1 \leq j \leq b}} \alpha_{(a,b)}(i,j) \binom{m}{i}\binom{n}{j},
\end{equation}
where $\alpha_{(a,b)}(i,j) \in \mathbb{Z}_{\geq 0}$. In the above expression, the upper bound is essential.
\begin{prop}\label{prop: upper bound}
Let $a$ and $b$ be positive integers. Then, it holds that
\begin{equation}
\alpha_{(a,b)}(a,b)>0.
\end{equation}
\end{prop}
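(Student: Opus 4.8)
The plan is to prove the equivalent assertion that the coefficient of $\binom{m}{a}\binom{n}{b}$ in the PBC $u_{(a,b)}(m,n)$ is positive; this coefficient equals $d(a,b)\,\alpha_{(a,b)}(a,b)$ by Theorem~\ref{main thm1}, so the two statements are indeed equivalent. I would argue by induction on $a+b$. The decisive simplification is that, by Theorem~\ref{main thm1} together with the nonnegativity of all exponents produced during the calculation method of Section~\ref{Sec: Construction}, every separate contribution to the exponent of $\mindilog{a}{b}$ is a nonnegative PBC. Hence it is enough, for each fixed $(a,b)$, to locate inside the construction of $u_{(a,b)}(m,n)$ a single contribution whose $\binom{m}{a}\binom{n}{b}$-component is strictly positive — no other term can cancel it.

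For the base case $b=1$ (with $a\geq1$ arbitrary) I would invoke the inverse formula, Proposition~\ref{prop: inverse formula}:
\[
d(a,1)\,\alpha_{(a,1)}(a,1)=\sum_{1\leq i\leq a}(-1)^{a+i}\binom{a}{i}\,u_{(a,1)}(i,1).
\]
By Lemma~\ref{lem: the ends of ordering products}, in the ordered product of $\mindilog{0}{1}^{1}\mindilog{1}{0}^{i}$ the factor $\mindilog{a}{1}$ occurs only for $i\geq a$, and for $i=a$ its exponent is $1$; thus $u_{(a,1)}(i,1)=0$ for $1\leq i<a$ and $u_{(a,1)}(a,1)=1$, so the sum collapses to $\binom{a}{a}=1>0$. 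For the inductive step, assume $b\geq2$ and that the statement holds for all pairs of smaller degree; in particular $\alpha_{(a,b-1)}(a,b-1)>0$, since $(a,b-1)\in N^{+}$ with $a,b-1\geq1$ and $\deg(a,b-1)=a+b-1<a+b$. Apply Algorithm~\ref{oa} to the product $C_{(m,n)}$ of (\ref{eq: C of (m,n)}); it is the concatenation of two ordered products, the first ending in $\mindilog{0}{1}^{1}$ and the second being the ordered product of $\mindilog{0}{1}^{n}\mindilog{1}{0}^{m}$ truncated at degree $a+b$ with the leading $\mindilog{1}{0}^{m}$ removed, which therefore contains $\mindilog{a}{b-1}^{u_{(a,b-1)}(m,n)}$. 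Since $(0,1)$ is maximal in the total order, $\mindilog{0}{1}$ is processed first: the copy $\mindilog{0}{1}^{1}$ at the right end of the first block is transported rightward through the second block, crossing each of its original factors exactly once, until it merges with $\mindilog{0}{1}^{n}$ at the right end. The crossing of $\mindilog{0}{1}^{1}$ with $\mindilog{a}{b-1}^{u_{(a,b-1)}(m,n)}$ is an instance of Step~2.2.(ii) with $F=\left(\begin{smallmatrix}a&0\\ b-1&1\end{smallmatrix}\right)$, so $|F|=a$ and $F\binom{1}{1}=\binom{a}{b}$; the $(p,q)=(1,1)$ term of (\ref{eq: X}), using $u_{(1,1)}(m,n)=mn$ (Lemma~\ref{lem: u_(1,1)(m,n)}), contributes the factor $\mindilog{a}{b}^{\,a\,u_{(a,b-1)}(m,n)}$. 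Consequently the PBC $f'(m,n)$ collecting all $\mindilog{a}{b}$-productions during this ordering dominates $a\,u_{(a,b-1)}(m,n)$ coefficientwise, so the coefficient of $\binom{m}{a}\binom{n}{b-1}$ in $f'(m,n)$ is at least $a\,d(a,b-1)\,\alpha_{(a,b-1)}(a,b-1)>0$. Finally, by (\ref{eq: reccurence formula for m,n}) one has $u_{(a,b)}(m,n)=n\,u_{(a,b)}(m,1)+\sum_{j=0}^{n-1}f'(m,j)$; since $n\,u_{(a,b)}(m,1)$ has degree $\leq1<b$ in $n$ and $\sum_{j=0}^{n-1}\binom{j}{b-1}=\binom{n}{b}$ by (\ref{sum of BCs}), the coefficient of $\binom{m}{a}\binom{n}{b}$ in $u_{(a,b)}(m,n)$ is exactly the coefficient of $\binom{m}{a}\binom{n}{b-1}$ in $f'(m,n)$, which is positive. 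Hence $\alpha_{(a,b)}(a,b)>0$.

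The main obstacle is the bookkeeping in the inductive step: one must verify carefully that when the travelling $\mindilog{0}{1}^{1}$ reaches $\mindilog{a}{b-1}$, that factor still carries exactly the exponent $u_{(a,b-1)}(m,n)$, i.e.\ that every factor created while $\mindilog{0}{1}^{1}$ passed the earlier factors of the second block is inserted to the left of $\mindilog{0}{1}^{1}$ and never interacts with $\mindilog{a}{b-1}$. This follows from the description of a single pass of Algorithm~\ref{oa} recorded in Lemma~\ref{ordering algorithm'} (parts C and E), but it must be written out explicitly; once it is granted, the remainder of the argument is a routine manipulation of PBC coefficients.
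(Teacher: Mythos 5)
Your proposal is correct and takes essentially the same route as the paper: induction on the degree, with the decisive contribution $\mindilog{a}{b}^{a\,u_{(a,b-1)}(m,n)}$ produced when the factor $\mindilog{0}{1}$ coming from $u_{(0,1)}(m,1)=1$ crosses $\mindilog{a}{b-1}^{u_{(a,b-1)}(m,n)}$ via $F=\left(\begin{smallmatrix}a & 0\\ b-1 & 1\end{smallmatrix}\right)$, followed by the recurrence (\ref{eq: reccurence formula for m,n}), the identity (\ref{sum of BCs}), and nonnegativity of all produced exponents to conclude $d(a,b)\alpha_{(a,b)}(a,b)\geq a\,d(a,b-1)\,\alpha_{(a,b-1)}(a,b-1)>0$. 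The only deviation is cosmetic: the paper starts from the single base case $(1,1)$ and invokes reciprocity (Proposition~\ref{prop: reciprocity}) to ensure $b\geq 2$ in the inductive step, whereas you cover the whole line $b=1$ directly via Lemma~\ref{lem: the ends of ordering products} and the inverse formula, which is equally valid, and the bookkeeping point you flag (the crossed factor still carrying its original exponent) is handled no more explicitly in the paper than in your sketch.
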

\begin{proof}
We show the claim by the induction on $l=a+b$. If $a+b=2$, namely, if $a=b=1$, then, by (\ref{example deg=7}), we have $u_{({1},{1})}(m,n)=mn$. It indicates that $\alpha_{({1},{1})}(1,1)=1$, and the claim holds. For some $l \geq 2$, suppose that the claim holds for any $(a,b) \in \mathbb{Z}_{\geq 1}^2$ with $a+b=l$. Let $a$ and $b$ be positive integers satisfying $a+b=l+1$. By Proposition~\ref{prop: reciprocity}, it suffices to show the claim when $a \leq b$. Then, since $a+b = l+1 \geq 3$, it holds that $b \geq 2$. Consider the product $C_{(m,n)}$ which is defined in (\ref{eq: C of (m,n)}). By applying Algorithm~\ref{oa} to $C_{(m,n)}$, there exists the following operation:
\begin{equation}
\begin{aligned}
\dilog{0}{1}\dilog{a}{b-1}^{u_{(a,b-1)}(m,n)} \equiv \dilog{a}{b-1}^{u_{(a,b-1)}(m,n)} \dilog{a}{b}^{au_{(a,b-1)}(m,n)} \dilog{0}{1}\\
\hspace{230pt} \mod G^{> a+b}.
\end{aligned}
\end{equation}
The above relation follows from applying Proposition~\ref{prop: proceed lemma} to the following relation by $F=\left(\begin{smallmatrix}a & 0\\ b-1 & 1\\\end{smallmatrix}\right)$. Note that the factors $\mindilog{x}{y}$ satisfying $\deg F\mindilog{x}{y} \leq a+b$ are $\mindilog{x}{y}=\mindilog{1}{0},\mindilog{1}{1},\mindilog{0}{1}$, and $u_{(1,1)}(au_{(a,b-1)}(m,n),a)=a^{2}u_{(a,b-1)}(m,n)$.
\begin{equation}
\begin{aligned}
\dilog{0}{1}^{a}\dilog{1}{0}^{au_{(a,b-1)}(m,n)} \equiv \dilog{1}{0}^{au_{(a,b-1)}(m,n)}\cdots\dilog{1}{1}^{a^2u_{(a,b-1)}(m,n)}\cdots\dilog{0}{1}^{a}\\
\hspace{230pt}\mod G^{>a+b-1}.
\end{aligned}
\end{equation}
Hence, by Lemma~\ref{ordering algorithm'}~E, we have
\begin{equation}
u_{(a,b)}(m,n+1)=u_{(a,b)}(m,n)+au_{(a,b-1)}(m,n)+\cdots.
\end{equation}
It holds that
\begin{equation}
\begin{aligned}
&\ u_{(a,b)}(m,n)\overset{(\ref{eq: reccurence formula for m,n})}=\sum_{k=0}^{n-1} (au_{(a,b-1)}(m,k)+\cdots)\\
=&\ ad(a,b-1)\sum_{\substack{1 \leq i \leq a,\\ 1 \leq j \leq b-1}} \alpha_{(a,b-1)}(a,b-1) \binom{m}{i}\sum_{k=0}^{n-1}\binom{k}{j}+\cdots.\\
\overset{(\ref{sum of BCs})}=&\ ad(a,b-1)\sum_{\substack{1 \leq i \leq a,\\ 1 \leq j \leq b-1}} \alpha_{(a,b-1)}(a,b-1) \binom{m}{i}\binom{n}{j+1}+\cdots.\\
\end{aligned}
\end{equation}
By focusing on the coefficient of $\binom{m}{a}\binom{n}{b}$, we have $d(a,b)\alpha_{(a,b)}(a,b) \geq ad(a,b-1)\alpha_{(a,b-1)}(a,b-1)>0$.
\end{proof}
On the other hand, the lower bound is not known yet. However, a partial result can be derived.
\begin{prop}\label{prop: lower bound}
Let $(a,b) \in N^{+}$ with $a > b \geq 1$. Then, for any positive integers $i < \frac{a}{b}$ and $j \leq b$, it holds that
\begin{equation}
\alpha_{(a,b)}(i,j)=0.
\end{equation}
\end{prop}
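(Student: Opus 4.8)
The plan is to reduce the assertion to a vanishing statement for the special values $u_{(a,b)}(i,j)$ and then read that vanishing off Lemma~\ref{lem: the ends of ordering products}. First I would invoke the inverse formula (Proposition~\ref{prop: inverse formula}): for any $1\le k\le a$ and $1\le l\le b$,
\[
d(a,b)\,\alpha_{(a,b)}(k,l)=\sum_{1\le i\le k,\ 1\le j\le l}(-1)^{i+j+k+l}\binom{k}{i}\binom{l}{j}u_{(a,b)}(i,j).
\]
Applying this with $(k,l)=(i,j)$ for a positive integer $i<a/b$ and $1\le j\le b$ (these lie in the admissible range, since $i<a/b\le a$ because $b\ge 1$), every term on the right involves $u_{(a,b)}(i',j')$ with $i'\le i<a/b$. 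So it suffices to prove the following: for every positive integer $p<a/b$ and every positive integer $q$, one has $u_{(a,b)}(p,q)=0$.

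The second step is an elementary order computation. By $(\ref{eq: skew bilinear form})$, $\{(a,b),(p,1)\}=bp-a$, which is negative exactly when $bp<a$, i.e. when $p<a/b$. Hence, whenever $1\le p<a/b$, the definition of the total order on $N^{+}$ gives $(a,b)<(p,1)$; moreover $(a,b)\neq(1,0)$ because $a>b\ge 1$. Now apply Lemma~\ref{lem: the ends of ordering products} with $\delta_1=p$, $\delta_2=q$: in the ordered product of $\mindilog{0}{1}^{q}\mindilog{1}{0}^{p}$, the only factor $\mindilog{a'}{b'}^{u}$ with $u\neq 0$ that is strictly below $\mindilog{p}{1}$ is $\mindilog{1}{0}^{p}$. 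Since $\mindilog{a}{b}<\mindilog{p}{1}$ and $\mindilog{a}{b}\neq\mindilog{1}{0}$, the exponent of $\mindilog{a}{b}$ in that ordered product is $0$, that is, $u_{(a,b)}(p,q)=0$.

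Combining the two steps, for $1\le i<a/b$ and $1\le j\le b$ every term in the inverse-formula expansion of $\alpha_{(a,b)}(i,j)$ vanishes, and since $d(a,b)=1/\gcd(a,b)\neq 0$ we conclude $\alpha_{(a,b)}(i,j)=0$, which is the claim.

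I do not anticipate a genuine obstacle: all the substance is contained in Proposition~\ref{prop: inverse formula} and Lemma~\ref{lem: the ends of ordering products}. The only points requiring a little care are the sign/direction of the skew form in the equivalence $(a,b)<(p,1)\iff p<a/b$, checking that the indices $(i,j)$ stay in the range $1\le k\le a$, $1\le l\le b$ where Proposition~\ref{prop: inverse formula} is valid, and observing that the hypothesis $j\le b$ is used \emph{only} to remain in that range — the vanishing $u_{(a,b)}(p,q)=0$ itself holds for all $q\ge 1$. The degenerate case $a\le b$ makes the assertion vacuous, and the index $p=0$ never occurs because the inverse-formula sum runs over $i\ge 1$.
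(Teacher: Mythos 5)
Your proof is correct, and it reaches the conclusion by a genuinely different deduction than the paper's, even though both rest on the same key input (Lemma~\ref{lem: the ends of ordering products}). The paper evaluates at a \emph{single} point: taking $s$ the largest integer with $s<\frac{a}{b}$, it notes $u_{(a,b)}(s,b)=0$, expands this via Theorem~\ref{main thm1} as $d(a,b)\sum_{i\le s,\,j\le b}\alpha_{(a,b)}(i,j)\binom{s}{i}\binom{b}{j}=0$, and then uses the \emph{nonnegativity} of the $\alpha_{(a,b)}(i,j)$ together with $\binom{s}{i}\binom{b}{j}>0$ to force every summand to vanish. You instead establish $u_{(a,b)}(p,q)=0$ for \emph{all} $p<\frac{a}{b}$ and $q\ge 1$ (by varying $\delta_1,\delta_2$ in the lemma) and feed these into the inverse formula of Proposition~\ref{prop: inverse formula}. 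Your route never uses the sign information on the coefficients, so it would survive in a setting where only the PBC expression, not its positivity, is known; the paper's route is shorter and needs only one special value, but it leans essentially on $\alpha_{(a,b)}(i,j)\ge 0$. Your range checks (that $(a,b)<(p,1)$ iff $bp<a$, that $(a,b)\ne(1,0)$ since $b\ge1$, and that $(i,j)$ stays within the domain of the inverse formula) are all accurate.
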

\begin{proof}
Let $s$ be the largest integer such that $1 \leq s < \frac{a}{b}$. By Lemma~\ref{lem: the ends of ordering products}, the strongly ordered product expression of $\mindilog{0}{1}^{b}\mindilog{1}{0}^{s}$ is
\begin{equation}
\dilog{1}{0}^{s} \dilog{s}{1}^{b} \cdots \dilog{1}{b}^{s}\dilog{0}{1}^{b}.
\end{equation}
Since $\mindilog{a}{b} < \mindilog{s}{1}$ and $\mindilog{a}{b} \neq \mindilog{1}{0}$, we have $u_{(a,b)}(s,b)=0$. By Theorem~\ref{main thm1}, it implies  that
\begin{equation}
u_{(a,b)}(s,b)= d(a,b)\sum_{\substack{1 \leq i \leq s,\\ 1 \leq j \leq b}} \alpha_{(a,b)}(i,j)\binom{s}{i}\binom{b}{j}=0.
\end{equation}
Since $\binom{s}{i}\binom{b}{j}>0$ and $\alpha_{(a,b)}(i,j) \geq 0$, we have $\alpha_{(a,b)}(i,j)=0$ for any $i \leq s$ and $j \leq b$. This completes the proof.
\end{proof}

\subsection{Special cases}\label{subsec: special cases}

\begin{prop}\label{prop: exponent of 1}
Let $a,b \in \mathbb{Z}_{\geq 0}$. Then, for any $m,n \in \mathbb{Z}_{\geq 0}$, we have
\begin{equation}
u_{(a,1)}(m,n)=\binom{m}{a}\binom{n}{1},\quad u_{(1,b)}(m,n)=\binom{m}{1}\binom{n}{b}.
\end{equation}
\end{prop}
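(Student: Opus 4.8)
The plan is to reduce everything to the first identity via reciprocity (Proposition~\ref{prop: reciprocity}), which gives $u_{(1,b)}(m,n)=u_{(b,1)}(n,m)$; so it suffices to prove $u_{(a,1)}(m,n)=\binom{m}{a}\binom{n}{1}$ for all $a\in\mathbb{Z}_{\ge 0}$. The boundary cases are immediate: $u_{(0,1)}(m,n)=n=\binom{m}{0}\binom{n}{1}$, since $\mindilog{0}{1}^n$ is the rightmost factor in (\ref{eq: definition of u(a,b)}), and $u_{(1,1)}(m,n)=mn=\binom{m}{1}\binom{n}{1}$ by Lemma~\ref{lem: u_(1,1)(m,n)}; so I may assume $a\ge 1$ (only $a\ge 2$ is genuinely new). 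Now by Theorem~\ref{main thm1}, using $d(a,1)=\gcd(a,1)^{-1}=1$ and that the second index runs only over $j=1$, I can write $u_{(a,1)}(m,n)=P(m)\binom{n}{1}$ with $P(m)=\sum_{i=1}^{a}\alpha_{(a,1)}(i,1)\binom{m}{i}$ a polynomial in $m$ of degree at most $a$ satisfying $P(0)=0$. Thus the whole statement collapses to the polynomial identity $P(m)=\binom{m}{a}$, and since both sides have degree $\le a$ it is enough to know $u_{(a,1)}(m,1)=P(m)$ at the points $m=0,1,\dots,a$.

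For this I would invoke Lemma~\ref{lem: the ends of ordering products} with $\delta_1=m$, $\delta_2=1$. For $1\le m\le a-1$ we have $\{(a,1),(m,1)\}=m-a<0$ by (\ref{eq: skew bilinear form}), hence $\mindilog{a}{1}<\mindilog{m}{1}=\mindilog{\delta_1}{1}$, and $\mindilog{a}{1}\neq\mindilog{1}{0}$; so the lemma forces the exponent of $\mindilog{a}{1}$ in the ordered product of $\mindilog{0}{1}\mindilog{1}{0}^{m}$ to vanish, i.e.\ $u_{(a,1)}(m,1)=0=\binom{m}{a}$. For $m=0$ the ordered product of $\mindilog{0}{1}$ is itself, so $u_{(a,1)}(0,1)=0$. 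For $m=a$ the lemma gives the ordered product of $\mindilog{0}{1}\mindilog{1}{0}^{a}$ as $\mindilog{1}{0}^{a}\mindilog{a}{1}^{1}\cdots\mindilog{1}{1}^{a}\mindilog{0}{1}^{1}$; since the ordered product in (\ref{eq: definition of u(a,b)}) is indexed by distinct rays, each $\mindilog{p}{q}$ occurs with a single exponent, so $u_{(a,1)}(a,1)=1=\binom{a}{a}$ (for $a=1$ one reads this off Lemma~\ref{lem: u_(1,1)(m,n)} instead). Hence $P$ and $\binom{\,\cdot\,}{a}$ agree at $m=0,1,\dots,a$, so $P(m)=\binom{m}{a}$, giving $u_{(a,1)}(m,n)=\binom{m}{a}\binom{n}{1}$; and then reciprocity yields $u_{(1,b)}(m,n)=u_{(b,1)}(n,m)=\binom{n}{b}\binom{m}{1}=\binom{m}{1}\binom{n}{b}$.

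There is essentially no obstacle here: the only step requiring a little care is extracting the exact values $u_{(a,1)}(m,1)\in\{0,1\}$ from Lemma~\ref{lem: the ends of ordering products} — namely verifying the order relation $\mindilog{a}{1}<\mindilog{m}{1}$ for $m<a$, and observing that a ray index appears at most once in the ordered product, so $\mindilog{a}{1}$ picks up no additional exponent. As an alternative finish one can bypass the interpolation and feed the same special values directly into the inverse formula (Proposition~\ref{prop: inverse formula}): $\alpha_{(a,1)}(k,1)=\sum_{i=1}^{k}(-1)^{i+k}\binom{k}{i}u_{(a,1)}(i,1)$ equals $0$ for $k<a$ and $1$ for $k=a$, which immediately gives $u_{(a,1)}(m,n)=\binom{m}{a}\binom{n}{1}$.
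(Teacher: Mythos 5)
Your argument is correct, but it is a genuinely different proof from the one in the paper. The paper proves $u_{(a,1)}(m,n)=\binom{m}{a}\binom{n}{1}$ by induction on $a$, running Algorithm~\ref{oa} on $C_{(m,1)}$ and $C_{(m,n)}$: it observes that the only anti-ordered pair that can create a factor $\mindilog{a+1}{1}$ is $\mindilog{1}{1}\mindilog{1}{0}^m$ (every other exchange $\mindilog{z}{w}^{g}\mindilog{x}{y}^{f}$ has $y,w\geq 1$ and so only produces factors $\mindilog{p}{q}$ with $q\geq 2$), extracts the recurrences $u_{(a+1,1)}(m+1,1)=u_{(a+1,1)}(m,1)+u_{(a,1)}(m,1)$ and $u_{(a+1,1)}(m,n+1)=u_{(a+1,1)}(m,n)+u_{(a+1,1)}(m,1)$, and sums them with (\ref{sum of BCs}). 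You instead combine the a priori shape $u_{(a,1)}(m,n)=P(m)\binom{n}{1}$ from Theorem~\ref{main thm1} with the special values $u_{(a,1)}(m,1)$ for $0\leq m\leq a$ read off from Lemma~\ref{lem: the ends of ordering products}, and finish by interpolation (or Proposition~\ref{prop: inverse formula}), handling $u_{(1,b)}$ by reciprocity. Your route is legitimate and free of circularity — Theorem~\ref{main thm1}, Proposition~\ref{prop: inverse formula} and Proposition~\ref{prop: reciprocity} are all established without appeal to Proposition~\ref{prop: exponent of 1} — and it is a clean illustration of the ``special values determine everything'' philosophy of Section~\ref{subsec: inverse formula}; the order comparison $\mindilog{a}{1}<\mindilog{m}{1}$ for $m<a$ and the value $u_{(a,1)}(a,1)=1$ are verified correctly. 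What the paper's proof buys in exchange for being longer is independence from the structural Theorem~\ref{main thm1}, and its recurrence technique is exactly the template that is reused (in a harder setting) for Theorem~\ref{b=2}; your proof is shorter but leans on one of the paper's main theorems as input.
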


\begin{proof}
By Theorem~\ref{main thm1} and Proposition~\ref{prop: lower bound}, $u_{(a,1)}(m,n)$ is expressed as
\begin{equation}
u_{(a,1)}(m,n)=d(a,1)\alpha_{(a,1)}(a,1)\binom{m}{a}\binom{n}{1}=\alpha_{(a,1)}(a,1)\binom{m}{a}\binom{n}{1}.
\end{equation}
By Lemma~\ref{lem: the ends of ordering products}, we have
\begin{equation}
u_{(a,1)}(a,1)=1.
\end{equation}
Thus, we obtain
\begin{equation}
\alpha_{(a,1)}(a,1)\binom{a}{a}\binom{1}{1}=\alpha_{(a,1)}(a,1)=1,
\end{equation}
and it implies that $u_{(a,1)}(m,n)=\binom{m}{a}\binom{n}{1}$.
\end{proof}
We may also find the exponent of $\mindilog{a}{2}$ explicitly. However, the proof is excessively long.
\begin{thm}\label{b=2}
Let $a \in \mathbb{Z}_{\geq 0}$. Then, for any $m,n \in \mathbb{Z}_{\geq 0}$, we have
\begin{equation}\label{relation b=2}
\begin{aligned}
&u_{(a,2)}(m,n)\\
=\ &\sum_{\frac{a}{2} < k \leq a}\left\lceil \frac{2k-a}{2} \right\rceil\binom{2k-a}{\left\lceil \frac{2k-a}{2} \right\rceil}\binom{k}{2k-a} \binom{m}{k}\binom{n}{2}\\
\ &\qquad+\sum_{\frac{a}{2}+1<k \leq a}\left\{\frac{2k-a}{2}\binom{2k-a-1}{\lceil\frac{2k-a-1}{2}\rceil}-2^{2k-a-2}\right\}\binom{k}{2k-a} \binom{m}{k}\binom{n}{1}.
\end{aligned}
\end{equation}
\end{thm}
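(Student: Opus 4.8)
The plan is to prove (\ref{relation b=2}) by induction on $a$, by carrying out Method~\ref{method: obtain the explicit forms} explicitly. The base cases $a=1,2$ already hold: $u_{(1,2)}(m,n)=\binom{m}{1}\binom{n}{2}$ by Proposition~\ref{prop: exponent of 1}, and $u_{(2,2)}(m,n)=2\binom{m}{2}\binom{n}{2}$ is recorded in Example~\ref{ex: explicit forms}; one checks directly that both equal the right-hand side of (\ref{relation b=2}). So fix $a\geq 3$ and assume (\ref{relation b=2}) for every first coordinate strictly less than $a$. By Theorem~\ref{main thm1} and Proposition~\ref{prop: lower bound} we may write $u_{(a,2)}(m,n)=\sum_{a/2<k\leq a}\bigl(\beta_k\binom{m}{k}\binom{n}{2}+\gamma_k\binom{m}{k}\binom{n}{1}\bigr)$ with $\beta_k,\gamma_k\in\mathbb{Q}_{\geq 0}$, so the task is to determine these finitely many coefficients.

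The key structural observation is to identify which exchanges in Algorithm~\ref{oa} can emit a factor whose second coordinate equals $2$. Since in $C_{(m,1)}$ and $C_{(m,n)}$ every factor has second coordinate $\leq 1$ except the ones of slope $>1$ in the second block, and $\mindilog{1}{0}$ stays at the extreme left throughout, a short case analysis of the second coordinate of $F\binom{p}{q}$ shows that a factor with second coordinate $2$ can be emitted in Step~2.2.(ii) only from a pair of slope-$\leq 1$ factors --- between two slope-one factors, between $\mindilog{1}{1}$ and a power of $\mindilog{1}{0}$, or between $\mindilog{0}{1}$ and a slope-one factor --- and that the only data entering the emitted exponent are $u_{(1,1)}(m,n)=mn$ (Lemma~\ref{lem: u_(1,1)(m,n)}) and the second-coordinate-$2$ exponents $u_{(x,2)}$ with $x<a$. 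Hence the computation of $u_{(a,2)}$ is reduced to the slope-$\leq 1$ sector, all of whose data is known: $u_{(x,1)}(m,n)=\binom{m}{x}\binom{n}{1}$ (Proposition~\ref{prop: exponent of 1}), $u_{(1,1)}(m,n)=mn$, $u_{(x,0)}$ from (\ref{eq: exponents of b=0}), $u_{(0,1)}(m,1)=1$, and $u_{(x,2)}$ for $x<a$ by the inductive hypothesis.

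Equipped with this, I would first order $C_{(m,1)}$: the leading $\mindilog{1}{1}$ sweeps rightward, each pass over $\mindilog{1}{0}^{m}$ emitting a fresh ordered block of slope-one factors $\mindilog{p+1}{1}^{\binom{m}{p}}$ together with the second-coordinate-$2$ factors $\mindilog{p+2}{2}^{u_{(p,2)}(m,1)}$, and each later pass over a slope-one factor $\mindilog{x}{1}^{f}$ emitting $\mindilog{x+1}{2}^{(x-1)f}$; then the freshly created slope-one block is again out of order and must be merged with the original slope-one factors using $\binom{m}{p}+\binom{m}{p+1}=\binom{m+1}{p+1}$. Via the recursion (\ref{eq: reccurence formula for n=1}) and the summation formula (\ref{sum of BCs}) this yields a closed form for $u_{(a,2)}(m,1)$, i.e.\ for the coefficients $\gamma_k$. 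The same procedure applied to $C_{(m,n)}$ --- where in addition the trailing $\mindilog{0}{1}$ of the first block sweeps through the second block, its pass over $\mindilog{x'}{1}^{u_{(x',1)}(m,n)}$ emitting $\mindilog{x'}{2}^{x'\,u_{(x',1)}(m,n)}$ --- together with (\ref{eq: reccurence formula for m,n}) yields $u_{(a,2)}(m,n)$, i.e.\ the coefficients $\beta_k$. Assembling the contributions requires repeated use of Lemma~\ref{binomial lemma} and (\ref{c}), and the resulting recursions in $a$ (into which $u_{(a-2,2)}$ feeds) solve to the central/near-central binomial numbers $\binom{2k-a}{\lceil(2k-a)/2\rceil}$, with the ceiling factors and the $2^{2k-a-2}$ correction emerging from the alternating sweeps of the two blocks.

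I expect the main obstacle to be precisely this last bookkeeping: after a factor has swept past a block it leaves behind a new ordered block of slope-one factors which is again out of order with what remains, so the product passes through many intermediate states, exponents shift several times, and the contributions to each $\mindilog{a}{2}$ accumulate over all of them. Organizing this accumulation so that it telescopes into (\ref{relation b=2}) reduces to a combinatorial identity for the series $\sum_{k}\lceil\tfrac{2k-a}{2}\rceil\binom{2k-a}{\lceil(2k-a)/2\rceil}\binom{k}{2k-a}t^{k}$ and its companion for the $\gamma_k$, which is where the generating function $(1-4t)^{-1/2}$ of the central binomial coefficients enters; once these identities are in place, the induction on $a$ closes. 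This is the "excessively long" computation alluded to in the statement.
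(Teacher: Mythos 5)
Your overall strategy coincides with the paper's: identify which anti-ordered pairs can emit $\mindilog{a}{2}$, observe that they all lie in the slope-$\leq 1$ sector where the inputs $u_{(x,1)}(m,n)=\binom{m}{x}\binom{n}{1}$, $u_{(1,1)}(m,n)=mn$ and (inductively) $u_{(x,2)}$ for $x<a$ are known, derive recurrences in $n$ and in $m$, and close an induction on $a$ into which $u_{(a-2,2)}$ feeds. This is exactly Proposition~\ref{prop: recurrence relations}, and your identification of the producing pairs --- including the $\mindilog{1}{1}\mindilog{1}{0}^m$ sweep that injects $u_{(a-2,2)}(m,1)$ --- is correct. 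One simplification you miss: since no new slope-one factors are created after the single initial sweep of $\mindilog{1}{1}$ over $\mindilog{1}{0}^m$ (and none at all in the ordering of $C_{(m,n)}$), and since Step~2.2 never alters the exponents of the two factors of an anti-ordered pair, each producing pair contributes exactly once with exponents read off from the initial products. The ``many intermediate states'' and shifting exponents you worry about do not occur for this computation, which is why the paper's argument is a clean two-recurrence calculation rather than an accumulation over sweeps.

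The genuine gap is that the decisive computation is absent, and the one concrete tool you propose for it points the wrong way. After summing the recurrences with (\ref{sum of BCs}), what remains is to evaluate, for each fixed $k$, the finite sums
\begin{equation*}
\sum_{x=a-k}^{\lfloor a/2\rfloor}(a-2x)\binom{a-x}{k-x}\binom{k}{a-x},\qquad
\sum_{x=a-k+1}^{\lfloor a/2\rfloor}(a-2x)\binom{a-x}{k-x}\binom{k-1}{a-x},
\end{equation*}
which arise from expanding $(a-2x)\binom{m}{a-x}\binom{m}{x}$ in the basis $\binom{m}{k}$ via Lemma~\ref{binomial lemma}. The paper's Lemma~\ref{lem: brief expression} converts these into the ceiling/central-binomial closed forms, resting on two elementary identities proved by telescoping: $\sum_{x=0}^{\lfloor u/2\rfloor}(u-2x)\binom{u}{x}=\lceil u/2\rceil\binom{u}{\lceil u/2\rceil}$ and $\sum_{x=0}^{\lfloor u/2\rfloor}\binom{u}{x}=2^{u-1}+\tfrac12\binom{u}{u/2}$ for $u$ even (the $-2^{2k-a-2}$ correction comes from the second). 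These are finite sums over $x$ at fixed $k$; no generating function in $t^k$ over $k$, and in particular no appeal to $(1-4t)^{-1/2}$, enters. Without these identities the induction on $a$ cannot close, because the recurrence for $u_{(a,2)}(m,1)$ mixes the inductive input $u_{(a-2,2)}(m,1)$ with the new convolution terms, and one needs the explicit coefficient of each $\binom{m}{k}$ in both before Pascal's rule (\ref{a}) can recombine them into (\ref{relation b=2}). So the plan is the right one, but the proof is missing its central lemma.
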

In the above relation, $\lceil x \rceil$ is the least integer more than or equal to $x \in \mathbb{Q}$. In (\ref{example deg=7}), we can see the examples. Also, we can simplify the above formula as follows:
\begin{equation}\label{eq: exponent of (a,2) 2}
\begin{aligned}
u_{(a,2)}(m,n)&=m\binom{m-1}{\lfloor \frac{a}{2} \rfloor}\binom{m-1}{\lceil\frac{a}{2}-1\rceil}\binom{n}{2}+\frac{m}{2}\binom{m-1}{\lfloor \frac{a-1}{2}\rfloor}\binom{m-1}{\lceil\frac{a-1}{2}\rceil}\binom{n}{1}\\
&\ \qquad-\sum_{\frac{a}{2}<k\leq a}2^{2k-a-2}\binom{k}{2k-a}\binom{m}{k}\binom{n}{1}.
\end{aligned}
\end{equation}
The proof of equivalence in these two expressions is given in Appedix~\ref{app: equivalence between}.
\par
We can check that every coefficient in the relation (\ref{relation b=2}) is nonzero. The proof of Theorem~\ref{b=2} is in Section~\ref{Proof of b=2}.
\section{Proof of Theorem \ref{b=2}}\label{Proof of b=2}
In this section, we express the exponent of $\mindilog{a}{2}$ explicitly. For the sake of simplicity, the proof of some equalities are given in Appendix~\ref{app: proof of lemma equality}. For any $x \in \mathbb{Q}$, $\lfloor x \rfloor$ is the greatest integer less than or equal to $x$, and $\lceil x \rceil$ is the least integer more than or equal to $x$.
First, we derive the recurrence relations enough to determine all $u_{(a,2)}(m,n)$ based on Method~\ref{method: obtain the explicit forms}.
\begin{prop}\label{prop: recurrence relations}
\textup{(a).}\ Let $a \in \mathbb{Z}_{\geq 1}$. Then, for any $m,n \in \mathbb{Z}_{\geq 0}$, the following relation holds.
\begin{equation}
\begin{aligned}
&\ u_{(a,2)}(m,n+1)\\
=&\ u_{(a,2)}(m,n)+u_{(a,2)}(m,1)\\
&\qquad +\sum_{k=\lceil\frac{a}{2}\rceil}^{a} \biggl\{\sum_{x=a-k}^{\lfloor \frac{a}{2} \rfloor} (a-2x)\binom{a-x}{k-x}\binom{k}{a-x}\biggr\}\binom{m}{k}\binom{n}{1}.\\
\end{aligned}
\end{equation}\\
\textup{(b).}\ Let $a \in \mathbb{Z}_{\geq 3}$. Then, for any $m \in \mathbb{Z}_{\geq 0}$, the following relation holds.
\begin{equation}
\begin{aligned}
&\ u_{(a,2)}(m+1,1)\\
=&\ u_{(a,2)}(m,1)+u_{(a-2,2)}(m,1)\\
 &\qquad + \sum_{k=\lceil\frac{a}{2}\rceil}^{a-1}\biggl\{\sum_{x=a-k}^{\lfloor\frac{a}{2}\rfloor} (a-2x)\binom{a-x}{k-x+1}\binom{k}{a-x}\biggr\}\binom{m}{k}.
\end{aligned}
\end{equation}
\end{prop}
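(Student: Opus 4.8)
The plan is to read both parts of Proposition~\ref{prop: recurrence relations} as instances of the general scheme behind Method~\ref{method: obtain the explicit forms}, applied to the single wall factor $\mindilog{a}{2}$. By (\ref{eq: method 2 for n=1}) and (\ref{eq: reccurence formula for m,n}), if we run Algorithm~\ref{oa} on the products $C_{(m,1)}$ and $C_{(m,n)}$ of (\ref{eq: C of (m,1)}) and (\ref{eq: C of (m,n)}) until they become ordered, then by Lemma~\ref{ordering algorithm'}(C,E) the exponent of $\mindilog{a}{2}$ in the outcome is its exponent in the initial product plus the sum, over all applications of Step~2.2(ii), of the exponents with which $\mindilog{a}{2}$ is \emph{created}. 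Since the exponent of $\mindilog{a}{2}$ in the initial $C_{(m,1)}$ is $u_{(a,2)}(m,1)$ and in the initial $C_{(m,n)}$ is $u_{(a,2)}(m,1)+u_{(a,2)}(m,n)$, part (b) is the claim that the total created contribution starting from $C_{(m,1)}$ equals $u_{(a-2,2)}(m,1)+\sum_k\{\cdots\}\binom{m}{k}$, and part (a) the claim that the one starting from $C_{(m,n)}$ equals $\sum_k\{\cdots\}\binom{m}{k}\binom{n}{1}$. So the whole proof reduces to: (i) identifying which collisions create $\mindilog{a}{2}$, (ii) attaching the correct exponent to each, and (iii) collapsing the resulting sums by the binomial identities of Section~\ref{Sec: PBC}.

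Step (i) is where $b=2$ is decisive. When Step~2.2(ii) acts on an anti-ordered pair $\mindilog{x}{y}^{f}\mindilog{a'}{b'}^{f'}$ with $F=\left(\begin{smallmatrix}a' & x\\ b' & y\end{smallmatrix}\right)$, it creates factors $(F\mindilog{p}{q})^{u_{(p,q)}(|F|f',|F|f)}$ for $p,q\ge 1$, and for $F\binom{p}{q}=\binom{a}{2}$ we need $b'p+yq=2$. Combined with the degree truncation $\deg F\binom{p}{q}\le l+1$, and with the fact that $\mindilog{1}{0}$ (minimal, hence never the pushed factor) and $\mindilog{0}{1}$ (maximal) are the only factors with a zero coordinate that occur, this leaves exactly two families of contributing collisions. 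The first is $\mindilog{x}{1}^{f}$ passing $\mindilog{a-x}{1}^{f'}$ with $0\le x<a/2$, where ``$x=0$'' means the factor $\mindilog{0}{1}$; here $F=\left(\begin{smallmatrix}a-x & x\\ 1 & 1\end{smallmatrix}\right)$, $|F|=a-2x$, and since $u_{(1,1)}(m,n)=mn$ the $(p,q)=(1,1)$ term creates $\mindilog{a}{2}^{(a-2x)ff'}$ (all other $(p,q)$ give factors with second coordinate $\ne 2$). The second family occurs only inside $C_{(m,1)}$: the collision $\mindilog{1}{1}^{1}\mindilog{1}{0}^{m}$ with $F=\left(\begin{smallmatrix}1 & 1\\ 0 & 1\end{smallmatrix}\right)$, $|F|=1$, which by Corollary~\ref{cor: proceed lemma} transports the whole ordered product of $\mindilog{0}{1}\mindilog{1}{0}^{m}$ through $F$; its $(p,q)=(a-2,2)$ term creates $\mindilog{a}{2}^{u_{(a-2,2)}(m,1)}$, and its $q=1$ terms create fresh slope-$1$ factors $\mindilog{c}{1}^{\binom{m}{c-1}}$ ($c\ge 2$) which then undergo collisions of the first family. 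One checks nothing else contributes: no new slope-$1$ factor is ever created by a first-family collision, and the remaining a priori possibilities (a created second-coordinate-$2$ factor colliding with a slope-$1$ factor, or a collision involving $\mindilog{1}{0}$ inside $C_{(m,n)}$) are killed by the constraint $b'p+yq=2$ together with $\mindilog{1}{0}$ being minimal; moreover an $n^{2}$-contribution is excluded outright because every created exponent is nonnegative.

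For step (ii), write $C_{(m,n)}$ as in (\ref{eq: C of (m,n)}), the product of an ordered block with exponents $u_{(\cdot,\cdot)}(m,1)$ followed by one with exponents $u_{(\cdot,\cdot)}(m,n)$. I would check that the first-family collisions creating $\mindilog{a}{2}$ are precisely those in which the larger factor $\mindilog{x}{1}$ lies in the first block — with exponent $u_{(x,1)}(m,1)=\binom{m}{x}$ by Proposition~\ref{prop: exponent of 1}, reading $u_{(0,1)}(m,1)=1=\binom{m}{0}$ for the $\mindilog{0}{1}$ of the first block — and the smaller factor $\mindilog{a-x}{1}$ lies in the second block, with exponent $u_{(a-x,1)}(m,n)=\binom{m}{a-x}n$; the book-keeping point is that Algorithm~\ref{oa} processes factors largest-first, so the copies of $\mindilog{x}{1}$ are moved before those of $\mindilog{a-x}{1}$ exactly when $x<a/2$, which guarantees that the second-block copy of $\mindilog{a-x}{1}$ still carries its original exponent $\binom{m}{a-x}n$ when the collision happens. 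Summing gives the created contribution $\sum_{x=0}^{\lfloor a/2\rfloor}(a-2x)\binom{m}{x}\binom{m}{a-x}\binom{n}{1}$; expanding $\binom{m}{x}\binom{m}{a-x}$ by Lemma~\ref{binomial lemma} and re-indexing produces the double sum in part (a), the ranges $\lceil a/2\rceil\le k\le a$ and $a-k\le x\le\lfloor a/2\rfloor$ coming exactly from the supports of $\binom{a-x}{k-x}$ and $\binom{k}{a-x}$. Part (b) is the same analysis for $C_{(m,1)}$: the second-family collision contributes $u_{(a-2,2)}(m,1)$ and spawns the factors $\mindilog{x}{1}^{\binom{m}{x-1}}$ ($x\ge 2$), which together with the isolated $\mindilog{1}{1}^{1}$ (the ``$x=1$'' case, exponent $1=\binom{m}{0}$) collide with the original $\mindilog{a-x}{1}^{\binom{m}{a-x}}$ of $C_{(m,1)}$, creating $\mindilog{a}{2}^{(a-2x)\binom{m}{x-1}\binom{m}{a-x}}$ for $x=1,\dots,\lfloor a/2\rfloor$; Lemma~\ref{binomial lemma} and re-indexing then give the stated sum, now with the shifted binomial $\binom{a-x}{k-x+1}$ and $k$ only up to $a-1$.

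The main obstacle is exactly step (ii): not the list of collisions, but proving the enumeration complete and that the \emph{un-merged} exponents are the ones attached to each first-family collision. This is a delicate order-of-operations argument inside Algorithm~\ref{oa} — one must track, as $\mindilog{0}{1}$ and the slope-$1$ factors are pushed to the right, which fresh factors get inserted where, and confirm that the ``duplicate'' copies of $\mindilog{a-x}{1}$ are not merged in prematurely — and it is presumably why the full proof is long and why the purely combinatorial identities needed to collapse the sums (beyond Lemma~\ref{binomial lemma}) are deferred to an appendix. A pleasant consistency check falls out along the way: in every contributing collision one factor is $n$-independent (coming from the first block, or from the transported $\mindilog{1}{0}^{m}$-block) and the other is linear in $n$, so the created contribution has $n$-degree exactly $1$, no $\binom{n}{2}$-term being produced beyond the $u_{(a,2)}(m,1)$ and $u_{(a,2)}(m,n)$ already present — in agreement with Theorem~\ref{main thm1}.
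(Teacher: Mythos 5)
Your proposal is correct and follows essentially the same route as the paper's own proof: it runs Algorithm~\ref{oa} on $C_{(m,n)}$ and $C_{(m,1)}$, uses the constraint $pw+qy=2$ (hence $p=q=y=w=1$) to show that the only collisions creating $\mindilog{a}{2}$ are the slope-one pairs $\mindilog{x}{1}\mindilog{a-x}{1}$ with $x<a/2$ (including $x=0$ for $\mindilog{0}{1}$), handles the pair $\mindilog{1}{1}\mindilog{1}{0}^m$ in $C_{(m,1)}$ via the transport by $F=\left(\begin{smallmatrix}1&1\\0&1\end{smallmatrix}\right)$ to produce the $u_{(a-2,2)}(m,1)$ term and the shifted factors $\mindilog{c}{1}^{\binom{m}{c-1}}$, and collapses the resulting sums with Proposition~\ref{prop: exponent of 1}, $u_{(1,1)}(m,n)=mn$, and Lemma~\ref{binomial lemma}. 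The bookkeeping you flag as delicate (no slope-one factors are ever created, and passed factors retain their original exponents until merged at the right end) is exactly the argument the paper uses, so the proposal matches the published proof in both strategy and detail.
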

\begin{proof}
(a).
Let $C_{(m,n)}$ be the product which is defined in (\ref{eq: C of (m,n)}). Apply Algorithm~\ref{oa} to $C_{(m,n)}$ repeatedly until it becomes strongly ordered. Suppose that an anti-ordered pair $\mindilog{x}{y}^g\mindilog{z}{w}^f$ produces a factor $\mindilog{a}{2}^{*}$ in Step~1.1~(ii). Since every dilogarithm element $\mindilog{s}{t}$ appearing in the initial product $C_{(m,n)}$ satisfies $t \geq 1$, we have $y,w \geq 1$. By (\ref{eq: X}), there exists $(p,q) \in \mathbb{Z}_{\geq 1}^2$ satisfying
\begin{equation}
\left(\begin{matrix} pz+qx \\ pw+qy \end{matrix}\right) = \left(\begin{matrix} a \\ 2 \end{matrix}\right).
\end{equation}
Since $pw+qy=2$ and $p,q,y,w \geq 1$, we have $p=q=y=w=1$. Since $pz+qx=a$, we have $z=a-x$. By $\mindilog{x}{y}>\mindilog{z}{w}$, we have $xw-yz<0$, and these imply that $2x-a<0$. Thus, every anti-ordered pair $\mindilog{x}{y}^g\mindilog{z}{w}^f$ which produces the factor $\mindilog{a}{2}^{*}$ has a following form:
\begin{equation}
\dilog{x}{1}^g \dilog{a-x}{1}^f \quad \left(x<\frac{a}{2}\right).
\end{equation}
Moreover, factors $\mindilog{x}{1}^{*}$ $(x=0,1,2,\dots)$ are not produced when we apply Algorithm~\ref{oa} to $C_{(m,n)}$. Thus, both $\mindilog{x}{1}^{g}$ and $\mindilog{a-x}{1}^f$ should be in the initial $C_{(m,n)}$. So, the anti-ordered pairs that produce $\mindilog{a}{2}^{*}$ are only the following ones:
\begin{equation}
\dilog{x}{1}^{u_{(x,1)}(m,1)}\dilog{a-x}{1}^{u_{({a-x},{1})}(m,n)}\quad\left(x<\frac{a}{2}\right).
\end{equation}
For any $x$, let $F = \left(\begin{smallmatrix}a-x & x \\ 1 & 1\\\end{smallmatrix}\right)$. Then, the following relations hold by Proposition~\ref{prop: proceed lemma}.
\begin{equation}
\begin{aligned}
&\ \dilog{x}{1}^{u_{(x,1)}(m,1)}\dilog{a-x}{1}^{u_{(a-x,1)}(m,n)}\\
=&\ \left(F\dilog{1}{0}\right)^{(a-2x)u_{(x,1)}(m,1)}\left(F\dilog{0}{1}\right)^{(a-2x)u_{(a-x,1)}(m,n)}\\
=&\  \left(F\dilog{0}{1}^{(a-2x)u_{(a-x,1)}(m,n)}\right)\cdots\\
&\qquad\times\left(F\dilog{1}{1}\right)^{u_{(1,1)}\bigl((a-2x)u_{(a-x,1)}(m,n),(a-2x)u_{(x,1)}(m,1)\bigr)}\\
&\qquad\times\cdots\left(F\dilog{1}{0}\right)^{(a-2x)u_{(x,1)}(m,1)}\\
=&\ \dilog{a-x}{1}^{u_{(a-x,1)}(m,n)}\cdots\\
&\qquad\times\dilog{a}{2}^{\frac{1}{a-2x}u_{(1,1)}\bigl((a-2x)u_{(a-x,1)}(m,n),(a-2x)u_{(x,1)}(m,1)\bigr)}\\
&\qquad\times\cdots\dilog{x}{1}^{u_{(x,1)}(m,1)}.
\end{aligned}
\end{equation}
In the above relations, the third and the fourth products are strongly ordered. Moreover, because of $u_{(1,1)}(m,n)=mn$, we have 
\begin{equation}
\begin{aligned}
&\ \frac{1}{a-2x}u_{(1,1)}\bigl((a-2x)u_{(a-x,1)}(m,n),(a-2x)u_{(x,1)}(m,1)\bigr)\\
=&\ (a-2x)u_{(a-x,1)}(m,n)u_{(x,1)}(m,1).
\end{aligned}
\end{equation}
By Proposition~\ref{prop: exponent of 1}, it is
\begin{equation}
\begin{aligned}
&\ (a-2x)\binom{m}{a-x}\binom{n}{1}\binom{m}{x}=(a-2x)\binom{m}{a-x}\binom{m}{x}\binom{n}{1}\\
\overset{(\ref{binomial lemma 2-prod})}{=}&\ (a-2x)\sum_{k=a-x}^{a} \binom{a-x}{k-x}\binom{k}{a-x}\binom{m}{k}\binom{n}{1}.
\end{aligned}
\end{equation}
Thus, we have
\begin{equation}\label{eq: 123}
\begin{aligned}
&\ u_{(a,2)}(m,n+1)\\
=&\ u_{(a,2)}(m,n)+u_{(a,2)}(m,1)\\
&\qquad +\sum_{0 \leq x < \frac{a}{2}}(a-2x)\sum_{k=a-x}^{a} \binom{a-x}{k-x}\binom{k}{a-x}\binom{m}{k}\binom{n}{1}\\
=&\ u_{(a,2)}(m,n)+u_{(a,2)}(m,1)\\
&\qquad +\sum_{0 \leq x \leq \frac{a}{2}}\sum_{k=a-x}^{a} (a-2x)\binom{a-x}{k-x}\binom{k}{a-x}\binom{m}{k}\binom{n}{1}.\\
\end{aligned}
\end{equation}
Since 
\begin{equation}
\begin{aligned}
&\ \left\{ (x,k) \in \mathbb{Z}^2\ |\ 0\leq x \leq \left\lfloor \frac{a}{2} \right\rfloor,\ a-x \leq k \leq a\right\}\\
=&\ \left\{ (x,k) \in \mathbb{Z}^2\ |\ a-\left\lfloor \frac{a}{2} \right\rfloor(=\left\lceil\frac{a}{2}\right\rceil) \leq k \leq a,\  a-k \leq x \leq \left\lfloor \frac{a}{2} \right\rfloor\right\},
\end{aligned}
\end{equation}
the equality (\ref{eq: 123}) can be expressed as follows:
\begin{equation}
\begin{aligned}
&\ u_{(a,2)}(m,n+1)\\
=&\ u_{(a,2)}(m,n)+u_{(a,2)}(m,1)\\
&\qquad +\sum_{k=\lceil\frac{a}{2}\rceil}^{a} \biggl\{\sum_{x=a-k}^{\lfloor \frac{a}{2} \rfloor} (a-2x)\binom{a-x}{k-x}\binom{k}{a-x}\biggr\}\binom{m}{k}\binom{n}{1}.\\
\end{aligned}
\end{equation}
(b). Let $C_{(m,1)}$ be the product which is defined in (\ref{eq: C of (m,1)}). Namely, consider
\begin{equation}\label{eq: n=1 C0}
\begin{aligned}
C_{(m,1)}=&\dilog{1}{0} \underline{\dilog{1}{1} \dilog{1}{0}^m}\biggl(\orderedprod_{\begin{smallmatrix} x+y \leq a+2 \\ x,y \geq 1 \end{smallmatrix}} \dilog{x}{y}^{u_{(x,y)}(m,1)}\biggr)\\
\end{aligned}
\end{equation}
Let $F=\left(\begin{smallmatrix}1 & 1\\ 0 & 1\\ \end{smallmatrix}\right)$. Then, by Proposition~\ref{prop: proceed lemma}, we have
\begin{equation}
\begin{aligned}
&\dilog{1}{1}\dilog{1}{0}^m = \left(F\dilog{0}{1}\right)\left(F\dilog{1}{0}\right)^m\\
\equiv&\left(F\dilog{1}{0}\right)^m\biggl\{\orderedprod_{\substack{c+d \leq a+1,\\ c,d \geq 1}}\left(F\dilog{c}{d}\right)^{u_{(c,d)}(m,1)}\biggr\}\left(F\dilog{0}{1}\right)\\
=&\dilog{1}{0}^m\biggl(\orderedprod_{\substack{c+d \leq a+1,\\ c,d \geq 1}}\dilog{c+d}{d}^{u_{(c,d)}(m,1)}\biggr)\dilog{1}{1} \\
\equiv&\dilog{1}{0}^m \biggl(\orderedprod_{\substack{c+d \leq a+1,\\ c,d \geq 1,\\z=c+d,w=d}} \dilog{z}{w}^{u_{(z-w,w)}(m,1)}\biggr)\dilog{1}{1} \mod G^{>a+2}.
\end{aligned}
\end{equation}
Putting the last expression to the RHS of (\ref{eq: n=1 C0}), we have
\begin{equation}
\begin{aligned}
C_{(m,1)} \equiv&\ \dilog{1}{0}^{m+1} \biggl(\orderedprod\dilog{z}{w}^{u_{(z-w,w)}(m,1)}\biggr)\dilog{1}{1}\\
&\qquad \times\biggl(\orderedprod_{\begin{smallmatrix} x+y \leq a+2 \\ x,y \geq 1 \end{smallmatrix}} \dilog{x}{y}^{u_{(x,y)}(m,1)}\biggr).
\end{aligned}
\end{equation}
Let
\begin{equation}
\begin{aligned}
&\ C'= \biggl(\orderedprod\dilog{z}{w}^{u_{(z-w,w)}(m,1)}\biggr)\dilog{1}{1}\biggl(\orderedprod_{\begin{smallmatrix} x+y \leq a+2 \\ x,y \geq 1 \end{smallmatrix}} \dilog{x}{y}^{u_{(x,y)}(m,1)}\biggr).
\end{aligned}
\end{equation}
Then, $C'$ satisfies the following conditions:
\begin{itemize}
\item Every dilogarithm element $\mindilog{x}{y}$ appearing in $C'$ satisfies $y \geq 1$.
\item The exponents of the factor $\mindilog{a}{2}^{*}$ are $u_{(a,2)}(m,1)$ and $u_{(a-2,2)}(m,1)$.
\end{itemize}
Thus, by a similar argument of (a), anti-ordered pairs producing $\mindilog{a}{2}^{*}$ are
\begin{equation}
\dilog{x}{1}^{u_{(x-1,1)}(m,1)}\dilog{a-x}{1}^{u_{(a-x,1)}(m,1)}\quad\left(1 \leq x<\frac{a}{2}\right).
\end{equation}
Moreover, for each $x=1,2,\cdots$, it produces $\mindilog{a}{2}^{*}$ whose exponent is
\begin{equation}
\begin{aligned}
&\ (a-2x)u_{(x-1,1)}(m,1)u_{(a-x,1)}(m,1)=(a-2x)\binom{m}{x-1}\binom{m}{a-x}\\
\overset{(\ref{binomial lemma 2-prod})}{=}&\ (a-2x)\sum_{k=a-x}^{a-1}\binom{a-x}{k-x+1}\binom{k}{a-x}\binom{m}{k}.
\end{aligned}
\end{equation}
Thus, we have
\begin{equation}
\begin{aligned}
&\ u_{(a,2)}(m+1,1)\\
=&\ u_{(a,2)}(m,1)+u_{(a-2,2)}(m,1)\\
 &\ \qquad + \sum_{1 \leq x <\frac{a}{2}}\sum_{k=a-x}^{a-1} (a-2x)\binom{a-x}{k-x+1}\binom{k}{a-x}\binom{m}{k}\\
=&\ u_{(a,2)}(m,1)+u_{(a-2,2)}(m,1)\\
 &\ \qquad + \sum_{k=\lceil\frac{a}{2}\rceil}^{a-1}\biggl\{\sum_{x=a-k}^{\lfloor\frac{a}{2}\rfloor} (a-2x)\binom{a-x}{k-x+1}\binom{k}{a-x}\biggr\}\binom{m}{k}.
\end{aligned}
\end{equation}
This completes the proof.
\end{proof}
Next, we try to solve this recurrence relations. By Proposition~\ref{prop: recurrence relations}~(a), we have
\begin{equation}\label{eq: u(m,n)'}
\begin{aligned}
&\ u_{(a,2)}(m,n)=u_{(a,2)}(m,0)+\sum_{j=0}^{n-1}\left\{u_{(a,2)}(m,j+1)-u_{(a,2)}(m,j)\right\}\\
\overset{(\ref{eq: exponents of n=0})}{=}&\ \sum_{j=0}^{n-1}\left\{u_{(a,2)}(m,j+1)-u_{(a,2)}(m,j)\right\}\\
=&\ \sum_{j=0}^{n-1}u_{(a,2)}(m,1)\\
&\qquad +\sum_{k=\lceil\frac{a}{2}\rceil}^{a} \biggl\{\sum_{x=a-k}^{\lfloor \frac{a}{2} \rfloor} (a-2x)\binom{a-x}{k-x}\binom{k}{a-x}\biggr\}\binom{m}{k}\sum_{j=1}^{n-1}\binom{j}{1}\\
\overset{(\ref{sum of BCs})}{=}&\ u_{(a,2)}(m,1)\binom{n}{1}\\
&\qquad +\sum_{k=\lceil\frac{a}{2}\rceil}^{a} \biggl\{\sum_{x=a-k}^{\lfloor \frac{a}{2} \rfloor} (a-2x)\binom{a-x}{k-x}\binom{k}{a-x}\biggr\}\binom{m}{k}\binom{n}{2}.\\
\end{aligned}
\end{equation}
Similarly, by Proposition~\ref{prop: recurrence relations}~(b), we have
\begin{equation}\label{eq: u(m,1)'}
\begin{aligned}
&u_{(a,2)}(m,1)\\
=\ &\sum_{j=0}^{m-1}u_{(a-2,2)}(j,1)\\
&\qquad+ \sum_{k=\lceil\frac{a}{2}\rceil}^{a-1}\left\{\sum_{x=a-k}^{\lfloor\frac{a}{2}\rfloor} (a-2x)\binom{a-x}{k-x+1}\binom{k}{a-x}\right\}\binom{m}{k+1}\\
=\ &\sum_{j=0}^{m-1}u_{(a-2,2)}(j,1) + \sum_{k=\lceil\frac{a}{2}\rceil+1}^{a}\left\{\sum_{x=a-k+1}^{\lfloor\frac{a}{2}\rfloor} (a-2x)\binom{a-x}{k-x}\binom{k-1}{a-x}\right\}\binom{m}{k}.\\
\end{aligned}
\end{equation}

These coefficients can be expressed concisely.
\begin{lem}\label{lem: brief expression}
\textup{(a).}\ For any $k=\left\lceil \frac{a}{2} \right\rceil, \dots, a$, the following equality holds.\\
\begin{equation}
\begin{aligned}
\sum_{x=a-k}^{\lfloor\frac{a}{2}\rfloor} (a-2x)\binom{a-x}{k-x} \binom{k}{a-x}
=\left\lceil \frac{2k-a}{2} \right\rceil\binom{2k-a}{\left\lceil \frac{2k-a}{2} \right\rceil}\binom{k}{2k-a}.
\end{aligned}
\end{equation}
\textup{(b).}\ For any $k=\left\lceil \frac{a}{2} \right\rceil+1, \dots, a$, the following equality holds.
\begin{equation}
\begin{aligned}
&\ \sum_{x=a-k+1}^{\lfloor\frac{a}{2}\rfloor} (a-2x)\binom{a-x}{k-x}\binom{k-1}{a-x}\\
=&\ \left\{\frac{2k-a}{2} \binom{2k-a-1}{\lceil\frac{2k-a-1}{2}\rceil} -2^{2k-a-2}\right\}\binom{k-1}{2k-a-1}.
\end{aligned}
\end{equation}
\end{lem}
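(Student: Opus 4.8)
The plan is to prove both parts of Lemma~\ref{lem: brief expression} by the same two-move reduction, followed by one ``master'' binomial identity. First I would substitute $y=a-x$ in the sums (so $a-2x=2y-a$, $\binom{a-x}{k-x}=\binom{y}{a-k}$, and $\binom{k}{a-x}=\binom{k}{y}$ in (a), resp. $\binom{k-1}{a-x}=\binom{k-1}{y}$ in (b)), then apply the subset-of-a-subset identity $\binom{n}{r}\binom{r}{s}=\binom{n}{s}\binom{n-s}{r-s}$ to the product $\binom{k}{y}\binom{y}{a-k}$ (resp. $\binom{k-1}{y}\binom{y}{a-k}$). In (a) this extracts the factor $\binom{k}{a-k}=\binom{k}{2k-a}$, and in (b) the factor $\binom{k-1}{a-k}=\binom{k-1}{2k-a-1}$, which are exactly the outside factors appearing in the claimed right-hand sides. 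Writing $d:=2k-a$ and reindexing by $j:=y-(a-k)$ (so $2y-a=2j-d$, and a short floor/ceiling computation shows the lower limit becomes $j=\lceil d/2\rceil$ while the upper limit is $j=d$ in (a) and $j=d-1$ in (b)), the lemma is reduced to the two identities
\[
\sum_{j=\lceil d/2\rceil}^{d}(2j-d)\binom{d}{j}=\Bigl\lceil\tfrac d2\Bigr\rceil\binom{d}{\lceil d/2\rceil},\qquad
\sum_{j=\lceil d/2\rceil}^{d-1}(2j-d)\binom{d-1}{j}=\frac d2\binom{d-1}{\lceil (d-1)/2\rceil}-2^{d-2}.
\]

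The key technical step is a master identity valid for all $D\geq 0$ and $m\geq 1$:
\[
\sum_{j\geq m}(2j-D)\binom{D}{j}=D\binom{D-1}{m-1}.
\]
This follows from $j\binom{D}{j}=D\binom{D-1}{j-1}$ together with Pascal's rule $\binom{D}{j}=\binom{D-1}{j}+\binom{D-1}{j-1}$ and the telescoping $\sum_{i\geq m-1}\binom{D-1}{i}-\sum_{j\geq m}\binom{D-1}{j}=\binom{D-1}{m-1}$. Taking $D=d$ and $m=\lceil d/2\rceil$ gives the first reduced identity at once, since $D\binom{D-1}{m-1}=d\binom{d-1}{\lceil d/2\rceil-1}=\lceil d/2\rceil\binom{d}{\lceil d/2\rceil}$ by $\binom{d}{j}=\tfrac dj\binom{d-1}{j-1}$ (the degenerate case $d=0$, which occurs when $a$ is even and $k=a/2$, is trivial as both sides vanish). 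This settles part (a).

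For part (b) I would write $2j-d=\bigl(2j-(d-1)\bigr)-1$, apply the master identity with $D=d-1$ to get $(d-1)\binom{d-2}{m-1}$ for the first piece, and reduce the remainder to the partial row sum $\sum_{j\geq\lceil d/2\rceil}\binom{d-1}{j}$. Evaluating the latter by the reflection $j\mapsto(d-1)-j$ gives $2^{d-2}$ when $d$ is even and $2^{d-2}-\tfrac12\binom{d-1}{(d-1)/2}$ when $d$ is odd; note the hypothesis $k\geq\lceil a/2\rceil+1$ forces $d\geq 2$ (indeed $d\geq 3$ when $a$ is odd), so $2^{d-2}$ is a positive integer. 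Combining the two pieces and simplifying — using $(d-1)\binom{d-2}{d/2-1}=\tfrac d2\binom{d-1}{d/2}$ in the even case and $\binom{2t-1}{t}=\tfrac12\binom{2t}{t}$ in the odd case — matches the claimed closed form exactly (with $\lceil(d-1)/2\rceil$ equal to $d/2$ when $d$ is even and to $(d-1)/2$ when $d$ is odd).

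The main obstacle is the part-(b) computation: unlike (a), it genuinely requires a parity split on $d=2k-a$, because the relevant central-type binomial coefficient behaves differently in the two cases (it is the non-central $\binom{d-1}{d/2}$ when $d$ is even, versus the true central $\binom{d-1}{(d-1)/2}$ when $d$ is odd), and one must be careful with the floor/ceiling bookkeeping, with the boundary term $\binom{d-1}{d-1}$ (so that the finite and "to infinity" sums agree), and with checking that all index ranges arising in the reduction are nonempty under the stated hypotheses on $k$. These verifications are elementary but error-prone; the two reduced identities above are the only substantive content, so the purely arithmetic simplifications (such as $(d-1)\binom{d-2}{d/2-1}=\tfrac d2\binom{d-1}{d/2}$) can be isolated and deferred to the appendix.
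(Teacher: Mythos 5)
Your proposal is correct and follows essentially the same route as the paper's Appendix~A: extract the common factor $\binom{k}{2k-a}$ (resp.\ $\binom{k-1}{2k-a-1}$) via the subset-of-a-subset identity, reduce to a weighted sum over a single row of Pascal's triangle, prove (a) by an absorption-plus-telescoping identity (your master identity at $m=\lceil d/2\rceil$ is the reflection of the paper's Lemma~\ref{important theorems lem-A}), and prove (b) by splitting off the partial row sum and doing the same parity analysis as the paper's Lemma~\ref{main thm lem-B}. The only differences are cosmetic (summing over the upper rather than the lower half of the row, and stating the telescoping lemma for a general lower limit $m$), and all the edge cases you flag ($d=0$ in (a), $d\geq 2$ in (b)) check out.
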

The proof is given in Appendix. By using these equalities, (\ref{eq: u(m,n)'}) and (\ref{eq: u(m,1)'}) become as follows:
\begin{equation}\label{eq: u(m,n)''}
\begin{aligned}
&\ u_{(a,2)}(m,n)\\
=&\ u_{(a,2)}(m,1)\binom{n}{1}\\
&\qquad +\sum_{k=\lceil\frac{a}{2}\rceil}^{a} \biggl\{\left\lceil \frac{2k-a}{2} \right\rceil\binom{2k-a}{\left\lceil \frac{2k-a}{2} \right\rceil}\binom{k}{2k-a}\biggr\}\binom{m}{k}\binom{n}{2}.\\
\end{aligned}
\end{equation}
\begin{equation}\label{eq: u(m,1)''}
\begin{aligned}
&u_{(a,2)}(m,1)\\
=\ &\sum_{j=0}^{m-1}u_{(a-2,2)}(j,1)\\
&\qquad + \sum_{k=\lceil\frac{a}{2}\rceil+1}^{a}\left\{\frac{2k-a}{2} \binom{2k-a-1}{\lceil\frac{2k-a-1}{2}\rceil} -2^{2k-a-2}\right\}\binom{k-1}{2k-a-1}\binom{m}{k}.\\
\end{aligned}
\end{equation}

By using the above equalities, we show Theorem~\ref{b=2}.
\begin{proof}[Proof of Theorem~\ref{b=2}]
By (\ref{eq: u(m,n)''}), it suffices to show that
\begin{equation}
u_{(a,2)}(m,1)=\sum_{\frac{a}{2}+1 < k \leq a}\left\{\frac{2k-a}{2}\binom{2k-a-1}{\lceil\frac{2k-a-1}{2}\rceil}-2^{2k-a-2}\right\}\binom{k}{2k-a} \binom{m}{k}.
\end{equation}
We prove it by the induction on $a$. If $a=1,2$, then $u_{(a,2)}(m,1)=0$. Thus, the statement holds. Let $a \geq 3$, and suppose that 
\begin{equation}
\begin{aligned}
&\ u_{(a-2,2)}(m,1)\\
=&\ \sum_{\frac{a}{2}<k \leq a-2}\left\{\frac{2k-a+2}{2}\binom{2k-a+1}{\lceil\frac{2k-a+1}{2}\rceil}-2^{2k-a}\right\}\binom{k}{2k-a+2} \binom{m}{k}.
\end{aligned}
\end{equation}
Then, by (\ref{eq: u(m,1)''}), we have
\begin{equation}
\begin{aligned}
&\ u_{(a,2)}(m,1)\\
=&\ \sum_{j=0}^{m-1}\sum_{\frac{a}{2}<k \leq a-2}\left\{\frac{2k-a+2}{2}\binom{2k-a+1}{\lceil\frac{2k-a+1}{2}\rceil}-2^{2k-a}\right\}\binom{k}{2k-a+2} \binom{j}{k}\\
&\qquad + \sum_{k=\lceil\frac{a}{2}\rceil+1}^{a}\left\{\frac{2k-a}{2} \binom{2k-a-1}{\lceil\frac{2k-a-1}{2}\rceil} -2^{2k-a-2}\right\}\binom{k-1}{2k-a-1}\binom{m}{k}\\
\overset{(\ref{c})}=&\ \sum_{\frac{a}{2}<k \leq a-2}\left\{\frac{2k-a+2}{2}\binom{2k-a+1}{\lceil\frac{2k-a+1}{2}\rceil}-2^{2k-a}\right\}\binom{k}{2k-a+2} \binom{m}{k+1}\\
&\qquad + \sum_{\frac{a}{2}+1<k \leq a}\left\{\frac{2k-a}{2} \binom{2k-a-1}{\lceil\frac{2k-a-1}{2}\rceil} -2^{2k-a-2}\right\}\binom{k-1}{2k-a-1}\binom{m}{k}\\
=&\ \sum_{\frac{a}{2}+1<k \leq a-1}\left\{\frac{2k-a}{2}\binom{2k-a-1}{\lceil\frac{2k-a-1}{2}\rceil}-2^{2k-a-2}\right\}\binom{k-1}{2k-a} \binom{m}{k}\\
&\qquad + \sum_{\frac{a}{2}+1 < k \leq a}\left\{\frac{2k-a}{2} \binom{2k-a-1}{\lceil\frac{2k-a-1}{2}\rceil} -2^{2k-a-2}\right\}\binom{k-1}{2k-a-1}\binom{m}{k}\\
\end{aligned}
\end{equation}
Consider the first term. We can add the factor of $k=a$ since $\binom{k-1}{2k-a}=\binom{a-1}{a}=0$. Thus, we have
\begin{equation}
\begin{aligned}
&\ u_{(a,2)}(m,1)\\
=&\ \sum_{\frac{a}{2}+1<k \leq a}\left\{\frac{2k-a}{2}\binom{2k-a-1}{\lceil\frac{2k-a-1}{2}\rceil}-2^{2k-a-2}\right\}\\
&\qquad\qquad\qquad\times\left\{\binom{k-1}{2k-a}+\binom{k-1}{2k-a-1}\right\} \binom{m}{k}\\
\overset{(\ref{a})}=&\ \sum_{\frac{a}{2}+1<k \leq a}\left\{\frac{2k-a}{2}\binom{2k-a-1}{\lceil\frac{2k-a-1}{2}\rceil}-2^{2k-a-2}\right\}\binom{k}{2k-a} \binom{m}{k}.\\
\end{aligned}
\end{equation}
This completes the proof.
\end{proof}

\appendix
\section{Proof of Lemma~\ref{lem: brief expression}}\label{app: proof of lemma equality}
In this appendix, we prove Lemma~\ref{lem: brief expression}.
\begin{lem}\label{important theorems lem-A}
For any $u \in \mathbb{Z}_{\geq 0}$, the following relation holds.
\begin{equation}
\sum_{x=0}^{\lfloor\frac{u}{2}\rfloor}(u-2x)\binom{u}{x}=\left\lceil\frac{u}{2}\right\rceil\binom{u}{\lceil\frac{u}{2}\rceil}.
\end{equation}
\end{lem}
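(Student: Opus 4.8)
The plan is to recognize the summand as a telescoping difference. First I would record the elementary identity
\begin{equation*}
(u-2x)\binom{u}{x} = (x+1)\binom{u}{x+1} - x\binom{u}{x},
\end{equation*}
which follows at once from $(u-x)\binom{u}{x}=(x+1)\binom{u}{x+1}$ (a special case of $(m-k)\binom{m}{k}=(k+1)\binom{m}{k+1}$, stated just after Definition~\ref{dfn: PBC}): indeed $(x+1)\binom{u}{x+1}-x\binom{u}{x}=(u-x)\binom{u}{x}-x\binom{u}{x}=(u-2x)\binom{u}{x}$.

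Next, setting $g(x)=x\binom{u}{x}$, the summand is $g(x+1)-g(x)$, so summing over $x=0,1,\dots,\lfloor u/2\rfloor$ telescopes and, since $g(0)=0$, yields
\begin{equation*}
\sum_{x=0}^{\lfloor u/2\rfloor}(u-2x)\binom{u}{x}=\Bigl(\left\lfloor\tfrac{u}{2}\right\rfloor+1\Bigr)\binom{u}{\lfloor u/2\rfloor+1}.
\end{equation*}

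It then remains to identify the right-hand side with $\left\lceil\frac{u}{2}\right\rceil\binom{u}{\lceil u/2\rceil}$. Writing $p=\left\lceil\frac{u}{2}\right\rceil$ and using $\left\lfloor\frac{u}{2}\right\rfloor+\left\lceil\frac{u}{2}\right\rceil=u$, we get $\left\lfloor\frac{u}{2}\right\rfloor+1=u-p+1$, hence $\binom{u}{\lfloor u/2\rfloor+1}=\binom{u}{u-p+1}=\binom{u}{p-1}$, and one more application of $(u-x)\binom{u}{x}=(x+1)\binom{u}{x+1}$ with $x=p-1$ turns $(u-p+1)\binom{u}{p-1}$ into $p\binom{u}{p}$, which is exactly the desired right-hand side. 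I expect no genuine obstacle here; the only care needed is the degenerate boundary case $u=0$ (both sides vanish), and making sure $p\ge 1$ so that $\binom{u}{u-p+1}=\binom{u}{p-1}$ is legitimate, which holds for all $u\ge 1$.
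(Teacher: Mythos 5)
Your proof is correct and follows essentially the same route as the paper's: both arguments collapse the sum to a single boundary term by cancellation (the paper splits $(u-2x)=(u-x)-x$ and cancels two shifted partial sums of $\binom{u-1}{x}$, while you package the same cancellation as an exact telescope $g(x+1)-g(x)$ with $g(x)=x\binom{u}{x}$), and then both convert the floor-indexed term into the ceiling-indexed one via the symmetry and absorption identities. Your telescoping presentation is slightly cleaner, and your handling of the $u=0$ boundary case is appropriate.
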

\begin{proof}
We have
\begin{equation}
\begin{aligned}
&\ \sum_{x=0}^{\lfloor\frac{u}{2}\rfloor}(u-2x)\binom{u}{x}=\sum_{x=0}^{\lfloor\frac{u}{2}\rfloor}(u-x)\binom{u}{u-x}-\sum_{x=0}^{\lfloor\frac{u}{2}\rfloor}x\binom{u}{x}\\
=&\ \sum_{x=0}^{\lfloor\frac{u}{2}\rfloor}u\binom{u-1}{u-x-1}-\sum_{x=1}^{\lfloor\frac{u}{2}\rfloor}u\binom{u-1}{x-1}\\
=&\ u\left\{\sum_{x=0}^{\lfloor\frac{u}{2}\rfloor}\binom{u-1}{x}-\sum_{x=0}^{\lfloor\frac{u}{2}\rfloor-1}\binom{u-1}{x}\right\}=u\binom{u-1}{\lfloor\frac{u}{2}\rfloor}\\
=&\ u\binom{u-1}{u-1-\lfloor\frac{a}{2}\rfloor}=u\binom{u-1}{\lceil \frac{a}{2} \rceil - 1}=\left\lceil \frac{a}{2} \right\rceil \binom{u}{\lceil\frac{a}{2}\rceil}.
\end{aligned}
\end{equation}
\end{proof}
\begin{lem}\label{main thm lem-B}
For any $u \in \mathbb{Z}_{\geq 0}$, the following relation holds.
\begin{equation}
\sum_{x=0}^{\lfloor\frac{u}{2}\rfloor} \binom{u}{x} = \begin{cases}
\displaystyle{2^{u-1} + \frac{1}{2}\binom{u}{\frac{u}{2}}} & u: {\rm even},\\
2^{u-1} & u: {\rm odd}.
\end{cases}
\end{equation}
\end{lem}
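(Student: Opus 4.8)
The plan is to derive this directly from the binomial theorem $\sum_{x=0}^{u}\binom{u}{x}=2^{u}$ together with the symmetry $\binom{u}{x}=\binom{u}{u-x}$, splitting into the two parity cases.

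First I would treat the case $u$ odd. Here $\lfloor u/2\rfloor=(u-1)/2$, so the index set $\{0,1,\dots,(u-1)/2\}$ and its complement $\{(u+1)/2,\dots,u\}$ partition $\{0,1,\dots,u\}$ into two blocks exchanged by $x\mapsto u-x$. Hence the two partial sums are equal, and each equals $\tfrac12\sum_{x=0}^{u}\binom{u}{x}=2^{u-1}$, which is the claimed value.

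Next I would treat the case $u$ even. Now $x\mapsto u-x$ fixes the middle index $x=u/2$ and pairs $\{0,\dots,u/2-1\}$ with $\{u/2+1,\dots,u\}$, so
\begin{equation}
2^{u}=\sum_{x=0}^{u}\binom{u}{x}=2\sum_{x=0}^{u/2-1}\binom{u}{x}+\binom{u}{u/2},
\end{equation}
which gives $\sum_{x=0}^{u/2-1}\binom{u}{x}=2^{u-1}-\tfrac12\binom{u}{u/2}$. Adding the middle term $\binom{u}{u/2}$ yields $\sum_{x=0}^{u/2}\binom{u}{x}=2^{u-1}+\tfrac12\binom{u}{u/2}$, as required. (The small case $u=0$ is consistent: the left side is $\binom{0}{0}=1=2^{-1}+\tfrac12\binom{0}{0}$.)

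There is essentially no obstacle here; the only point needing a word of care is that in the even case the "middle" term must be added back after the folding argument, and in the odd case one should note the index ranges really do tile $\{0,\dots,u\}$ with no overlap. Since Lemma~\ref{important theorems lem-A} already uses partial sums of binomial coefficients of the form $\sum_{x=0}^{\lfloor u/2\rfloor}\binom{u-1}{x}$, an alternative is to quote this identity inductively, but the direct folding argument above is shorter and self-contained.
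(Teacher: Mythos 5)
Your proof is correct and uses essentially the same idea as the paper: the symmetry $\binom{u}{x}=\binom{u}{u-x}$ combined with $\sum_{x=0}^{u}\binom{u}{x}=2^{u}$, with the middle term handled separately in the even case. The paper phrases it by doubling the partial sum and observing the overlap at $x=u/2$, while you fold the complementary ranges and add the middle term back, but these are the same computation.
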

\begin{proof}
Since $\displaystyle{\sum_{x=0}^{\lfloor\frac{u}{2}\rfloor} \binom{u}{x} =\sum_{x=0}^{\lfloor\frac{u}{2}\rfloor} \binom{u}{u-x}=\sum_{x=u-\lfloor\frac{u}{2}\rfloor}^{u} \binom{u}{x}}$, we have
\begin{equation}
\begin{aligned}
&2\sum_{x=0}^{\lfloor\frac{u}{2}\rfloor} \binom{u}{x} = \sum_{x=0}^{\lfloor\frac{u}{2}\rfloor} \binom{u}{x}+\sum_{x=u-\lfloor\frac{u}{2}\rfloor}^{u} \binom{u}{x}\\
=&\begin{cases}
\displaystyle{\sum_{x=0}^{u} \binom{u}{x} + \binom{u}{\frac{u}{2}} = 2^{u}+\binom{u}{\frac{u}{2}}} & \textup{$u$: even},\\
 \displaystyle{\sum_{x=0}^{u} \binom{u}{x} = 2^u} & \textup{$u$: odd}.\\
\end{cases}
\end{aligned}
\end{equation}
So, Lemma~\ref{main thm lem-B} holds.
\end{proof}

By using the above equality, we obtain the main lemmas.

\begin{proof}[Proof of Lemma~\ref{lem: brief expression}]
(a).
We can easily check $\binom{a-x}{k-x}\binom{k}{a-x}=\binom{k}{2k-a}\binom{2k-a}{k-x}$. Hence, we have
\begin{equation}
\begin{aligned}
\ &\sum_{x=a-k}^{\lfloor\frac{a}{2}\rfloor} (a-2x)\binom{a-x}{k-x} \binom{k}{a-x} = \binom{k}{2k-a}\sum_{x=a-k}^{\lfloor\frac{a}{2}\rfloor} (a-2x)\binom{2k-a}{k-x}\\
=\ &\binom{k}{2k-a}\sum_{x=0}^{\lfloor\frac{a}{2}\rfloor-(a-k)} (a-2(x+a-k))\binom{2k-a}{k-(x+a-k)}\\
=\ &\binom{k}{2k-a}\sum_{x=0}^{\lfloor\frac{2k-a}{2}\rfloor} ((2k-a)-2x)\binom{2k-a}{x}\\
=\ &\left\lceil \frac{2k-a}{2} \right\rceil\binom{2k-a}{\left\lceil \frac{2k-a}{2} \right\rceil}\binom{k}{2k-a}. \quad (\textup{Lemma~\ref{important theorems lem-A}})
\end{aligned}
\end{equation}
(b).
We can easily check $\binom{a-x}{k-x}\binom{k-1}{a-x}=\binom{k-1}{2k-a-1}\binom{2k-a-1}{k-a+x-1}$. Thus, we have
\begin{equation}\label{eq: lem b}
\begin{aligned}
&\ \sum_{x=a-k+1}^{\lfloor\frac{a}{2}\rfloor} (a-2x)\binom{a-x}{k-x}\binom{k-1}{a-x}\\
=&\ \binom{k-1}{2k-a-1}\sum_{x=a-k+1}^{\lfloor\frac{a}{2}\rfloor}(a-2x)\binom{2k-a-1}{k-a+x-1}\\
=&\ \binom{k-1}{2k-a-1}\sum_{x=0}^{\lfloor\frac{2k-a-2}{2}\rfloor} ((2k-a-2)-2x)\binom{2k-a-1}{x}\\
=&\ \binom{k-1}{2k-a-1}\left\{\sum_{x=0}^{\lfloor\frac{2k-a-2}{2}\rfloor} ((2k-a-1)-2x)\binom{2k-a-1}{x}\right.\\
 &\qquad\qquad\qquad\left.-\sum_{x=0}^{\lfloor\frac{2k-a-2}{2}\rfloor}\binom{2k-a-1}{x}\right\}.\\
\end{aligned}
\end{equation}
(i) If $a$ is odd, then $\lfloor\frac{2k-a-2}{2}\rfloor=\frac{2k-a-3}{2}$, $\lfloor\frac{2k-a-1}{2}\rfloor=\frac{2k-a-1}{2}$. Thus, we have
\begin{equation}\label{eq: odd1}
\begin{aligned}
&\ \sum_{x=0}^{\lfloor\frac{2k-a-2}{2}\rfloor} ((2k-a-1)-2x)\binom{2k-a-1}{x}\\
=&\ \sum_{x=0}^{\frac{2k-a-3}{2}} ((2k-a-1)-2x)\binom{2k-a-1}{x}\\
=&\ \sum_{x=0}^{\frac{2k-a-1}{2}} ((2k-a-1)-2x)\binom{2k-a-1}{x}\\
=&\ \frac{2k-a-1}{2}\binom{2k-a-1}{\frac{2k-a-1}{2}} \quad (\textup{Lemma~\ref{important theorems lem-A}}).
\end{aligned}
\end{equation}
Since $2k-a-1$ is even, we have
\begin{equation}
\begin{aligned}
&\ \sum_{x=0}^{\lfloor\frac{2k-a-2}{2}\rfloor}\binom{2k-a-1}{x}
=\ \sum_{x=0}^{\frac{2k-a-3}{2}}\binom{2k-a-1}{x}\\
=&\ \sum_{x=0}^{\frac{2k-a-1}{2}}\binom{2k-a-1}{x}-\binom{2k-a-1}{\frac{2k-a-1}{2}}\\
=&\ 2^{2k-a-2}+\frac{1}{2}\binom{2k-a-1}{\frac{2k-a-1}{2}}-\binom{2k-a-1}{\frac{2k-a-1}{2}}\quad (\textup{Lemma~\ref{main thm lem-B}})\\
=&\ 2^{2k-a-2}-\frac{1}{2}\binom{2k-a-1}{\frac{2k-a-1}{2}}.
\end{aligned}
\end{equation}
Hence, we have
\begin{equation}
\begin{aligned}
\ &\sum_{x=0}^{\lfloor\frac{2k-a-2}{2}\rfloor}((2k-a-1)-2x)\binom{2k-a-1}{x}-\sum_{x=0}^{\lfloor\frac{2k-a-2}{2}\rfloor}\binom{2k-a-1}{x}\\
=\ &\frac{2k-a-1}{2}\binom{2k-a-1}{\frac{2k-a-1}{2}}-\left(2^{2k-a-2}-\frac{1}{2}\binom{2k-a-1}{\frac{2k-a-1}{2}}\right)\\
=\ &\frac{2k-a}{2}\binom{2k-a-1}{\frac{2k-a-1}{2}}-2^{2k-a-2}.
\end{aligned}
\end{equation}
Putting the last expression to (\ref{eq: lem b}), we have
\begin{equation}
\begin{aligned}
&\ \sum_{x=a-k+1}^{\lfloor\frac{a}{2}\rfloor} (a-2x)\binom{a-x}{k-x}\binom{k-1}{a-x}\\
=&\ \left\{\frac{2k-a}{2} \binom{2k-a-1}{\lceil\frac{2k-a-1}{2}\rceil} -2^{2k-a-2}\right\}\binom{k-1}{2k-a-1}.
\end{aligned}
\end{equation}
(ii) If $a$ is even, then $\lfloor\frac{2k-a-2}{2}\rfloor=\lfloor\frac{2k-a-1}{2}\rfloor=\frac{2k-a-2}{2}$. Thus, we have
\begin{equation}
\begin{aligned}
&\ \sum_{x=0}^{\lfloor\frac{2k-a-2}{2}\rfloor} ((2k-a-1)-2x)\binom{2k-a-1}{x}\\
=&\ \sum_{x=0}^{\lfloor\frac{2k-a-1}{2}\rfloor} ((2k-a-1)-2x)\binom{2k-a-1}{x}\\
=&\ \frac{2k-a}{2}\binom{2k-a-1}{\frac{2k-a}{2}}.
\end{aligned}
\end{equation}
Since $2k-a-1$ is odd, we have
\begin{equation}
\begin{aligned}
&\ \sum_{x=0}^{\lfloor\frac{2k-a-2}{2}\rfloor}\binom{2k-a-1}{x}
=\sum_{x=0}^{\lfloor\frac{2k-a-1}{2}\rfloor}\binom{2k-a-1}{x}\\
=&\ 2^{2k-a-2}.
\end{aligned}
\end{equation}
Putting these expressions to (\ref{eq: lem b}), we obtain
\begin{equation}
\begin{aligned}
&\ \sum_{x=a-k+1}^{\lfloor\frac{a}{2}\rfloor} (a-2x)\binom{a-x}{k-x}\binom{k-1}{a-x}\\
=&\ \left\{\frac{2k-a}{2} \binom{2k-a-1}{\frac{2k-a}{2}} -2^{2k-a-2}\right\}\binom{k-1}{2k-a-1}\\
=&\ \left\{\frac{2k-a}{2} \binom{2k-a-1}{\lceil\frac{2k-a-1}{2}\rceil} -2^{2k-a-2}\right\}\binom{k-1}{2k-a-1}.
\end{aligned}
\end{equation}
\end{proof}

\section{The equivalence between (\ref{relation b=2}) and (\ref{eq: exponent of (a,2) 2})}\label{app: equivalence between}
First, we show the following lemma.
\begin{lem}
Let $\alpha,\beta \in \mathbb{Z}_{\geq 0}$ with $\alpha \geq \beta$. Then, we have
\begin{equation}\label{eq: lem B}
m\binom{m-1}{\alpha}\binom{m-1}{\beta}=\sum_{k=\alpha+1}^{\alpha+\beta+1}\binom{\alpha}{k-\beta-1}\binom{k-1}{\alpha}\binom{k}{k-1}\binom{m}{k}.
\end{equation}
\end{lem}
\begin{proof}
By Lemma~\ref{binomial lemma}, we have
\begin{equation}
\binom{m-1}{\alpha}\binom{m-1}{\beta}=\sum_{k=\alpha}^{\alpha+\beta}\binom{\alpha}{k-\beta}\binom{k}{\alpha}\binom{m-1}{k}. 
\end{equation}
By definition, we can check that $m \binom{m-1}{k}=(k+1)\binom{m}{k+1}$. Thus, we have
\begin{equation}
\begin{aligned}
m\binom{m-1}{\alpha}\binom{m-1}{\beta}&=\sum_{k=\alpha}^{\alpha+\beta}\binom{\alpha}{k-\beta}\binom{k}{\alpha}(k+1)\binom{m}{k+1}\\
&=\sum_{k=\alpha+1}^{\alpha+\beta+1}\binom{\alpha}{k-1-\beta}\binom{k-1}{\alpha}k\binom{m}{k}\\
&=\sum_{k=\alpha+1}^{\alpha+\beta+1}\binom{\alpha}{k-1-\beta}\binom{k-1}{\alpha}\binom{k}{k-1}\binom{m}{k}.
\end{aligned}
\end{equation}
This completes the proof.
\end{proof}
By using this formula, we show the following two equalities.
\begin{prop}
Let $a \in \mathbb{Z}_{\geq 0}$. Then, for any $m \in \mathbb{Z}_{\geq 0}$, we have
\begin{align}
&\begin{aligned}
 \sum_{\frac{a}{2} < k \leq a}\left\lceil \frac{2k-a}{2} \right\rceil\binom{2k-a}{\left\lceil \frac{2k-a}{2} \right\rceil}\binom{k}{2k-a} \binom{m}{k}
= m\binom{m-1}{\lfloor \frac{a}{2} \rfloor}\binom{m-1}{\lceil\frac{a}{2}-1\rceil},
\end{aligned}\label{eq: app. B,1}\\
&\begin{aligned}
\sum_{\frac{a}{2}<k\leq a}\frac{2k-a}{2}\binom{2k-a-1}{\lceil \frac{2k-a-1}{2} \rceil}\binom{k}{2k-a}\binom{m}{k}
= \frac{m}{2}\binom{m-1}{\lfloor \frac{a-1}{2} \rfloor}\binom{m-1}{\lceil \frac{a-1}{2}\rceil}.
\end{aligned}\label{eq: app. B,2}
\end{align}
\end{prop}
\begin{proof}
First, we show (\ref{eq: app. B,1}). By definition, we can check
\begin{equation}
\begin{aligned}
&\ \left\lceil\frac{2k-a}{2} \right\rceil\binom{2k-a}{\left\lceil \frac{2k-a}{2} \right\rceil}\binom{k}{2k-a}=\binom{\lfloor \frac{a}{2} \rfloor}{k-1-\lceil\frac{a}{2}-1\rceil}\binom{k-1}{\lfloor\frac{a}{2}\rfloor}\binom{k}{k-1}\\
=&\ \frac{k!}{(a-k)!(k-1-\lfloor\frac{a}{2}\rfloor)!(k-1-\lceil\frac{a}{2}-1\rceil)!}.
\end{aligned}
\end{equation}
In the above equalities, $k!=k(k-1)\cdots2\cdot1$ is the factorial of $k \in \mathbb{Z}_{\geq 0}$. Thus, by setting $\alpha=\lfloor \frac{a}{2} \rfloor$ and $\beta = \lceil \frac{a}{2}-1 \rceil$ in (\ref{eq: lem B}), the equality (\ref{eq: app. B,1}) holds.
\par
Next, we show (\ref{eq: app. B,2}). We have
\begin{equation}
\begin{aligned}
&\ (2k-a)\binom{2k-a-1}{\lceil \frac{2k-a-1}{2} \rceil}\binom{k}{2k-a}=\binom{\lceil \frac{a-1}{2} \rceil}{k-1-\lfloor \frac{a-1}{2} \rfloor}\binom{k-1}{\lceil \frac{a-1}{2} \rceil}\binom{k}{k-1}\\
=&\ \frac{k!}{(a-k)!(k-1-\lceil \frac{a-1}{2} \rceil)!(k-1-\lfloor \frac{a-1}{2} \rfloor)}.
\end{aligned}
\end{equation}
Thus, by setting $\alpha=\lceil \frac{a-1}{2} \rceil$ and $\beta = \lfloor \frac{a-1}{2} \rfloor$ in (\ref{eq: lem B}), we have
\begin{equation}\label{eq: app. 3}
\sum_{\frac{a}{2} < k \leq a} \frac{2k-a}{2}\binom{2k-a-1}{\lceil \frac{2k-a-1}{2} \rceil}\binom{k}{2k-a}\binom{m}{k}=\frac{m}{2}\binom{m-1}{\lfloor \frac{a-1}{2} \rfloor}\binom{m-1}{\lceil \frac{a-1}{2} \rceil}.
\end{equation}
\end{proof}
Last, we show (\ref{eq: exponent of (a,2) 2}).
\begin{proof}[Proof of (\ref{eq: exponent of (a,2) 2})]
By (\ref{eq: app. B,1}), the first term in the RHS of (\ref{relation b=2}) is $m\binom{m-1}{\lfloor \frac{a}{2} \rfloor}\binom{m-1}{\lceil\frac{a}{2}-1\rceil}\binom{n}{2}$. Consider the second term of (\ref{relation b=2}), that is,
\begin{equation}
\sum_{\frac{a}{2}+1<k \leq a}\left\{\frac{2k-a}{2}\binom{2k-a-1}{\lceil\frac{2k-a-1}{2}\rceil}-2^{2k-a-2}\right\}\binom{k}{2k-a} \binom{m}{k}\binom{n}{1}.
\end{equation}
If $k=\lfloor \frac{a}{2}+1 \rfloor$, or equivalently, if $2k-a=1$ or $2$, we can easily check that
\begin{equation}
\frac{2k-a}{2}\binom{2k-a-1}{\lceil\frac{2k-a-1}{2}\rceil}-2^{2k-a-2}=0.
\end{equation}
Thus, we have
\begin{equation}
\begin{aligned}
&\ \sum_{\frac{a}{2}+1<k \leq a}\left\{\frac{2k-a}{2}\binom{2k-a-1}{\lceil\frac{2k-a-1}{2}\rceil}-2^{2k-a-2}\right\}\binom{k}{2k-a} \binom{m}{k}\binom{n}{1}\\
=&\ \sum_{\frac{a}{2}<k\leq a}\left\{\frac{2k-a}{2}\binom{2k-a-1}{\lceil\frac{2k-a-1}{2}\rceil}-2^{2k-a-2}\right\}\binom{k}{2k-a} \binom{m}{k}\binom{n}{1}\\
=&\ \sum_{\frac{a}{2}<k\leq a}\frac{2k-a}{2}\binom{2k-a-1}{\lceil\frac{2k-a-1}{2}\rceil}\binom{k}{2k-a}\binom{m}{k}\binom{n}{1}\\
&\ \quad - \sum_{\frac{a}{2}<k\leq a}2^{2k-a-2}\binom{k}{2k-a}\binom{m}{k}\binom{n}{1}\\
\overset{(\ref{eq: app. B,2})}=&\ \frac{m}{2}\binom{m-1}{\lfloor \frac{a-1}{2} \rfloor}\binom{m-1}{\lceil \frac{a-1}{2}\rceil}-\sum_{\frac{a}{2}<k\leq a}2^{2k-a-2}\binom{k}{2k-a}\binom{m}{k}\binom{n}{1}.
\end{aligned}
\end{equation}
Thus, (\ref{eq: exponent of (a,2) 2}) holds.
\end{proof}
\bibliographystyle{alpha}
\bibliography{bunken}
\end{document}